\def\cleardoublepage{\clearpage\if@twoside \ifodd\c@page\else%
	\hbox{}%
	\thispagestyle{empty}
	\newpage%
	\if@twocolumn\hbox{}\newpage\fi\fi\fi}
\DeclareMathOperator{\sign}{sign}
\let\cleardoublepage\clearpage
\newcommand{\R}{\mathbb{R}}
\newcommand{\norm}[1]{\left\lVert#1\right\rVert}
\newtheorem{thm}{Theorem}[section]
\newtheorem{cor}[thm]{Corollary}
\newtheorem{lem}[thm]{Lemma}
\newtheorem{pro}[thm]{Proposition}
\newtheorem{den}[thm]{Definition}
\newtheorem{oss}[thm]{Remark}
\newtheorem{rem}[thm]{Remark}
\numberwithin{equation}{section}
\begin{document}
	
	\title[An inhomogeneous PME with non-integrable data: asymptotics]{An inhomogeneous porous medium equation \\ with non-integrable data: asymptotics}
	
	\author{Matteo Muratori, Troy Petitt, and Fernando Quir\'os}
	
	\address{Matteo Muratori and Troy Petitt: Dipartimento di Matematica, Politecnico di Milano, Piazza Leonardo da Vinci 32, 20133 Milano (Italy)}
	\email{matteo.muratori@polimi.it}
	\email{troy.petitt@polimi.it}
	

	\address{Fernando Quir\'os: Departamento de Matem\'aticas, Universidad Aut\'onoma de Madrid, and  Instituto de Ciencias Matem\'aticas ICMAT (CSIC--UAM--UCM--UC3M), Campus de Cantoblanco, 28049 Madrid (Spain)}
	\email{fernando.quiros@uam.es}

	\makeatletter
	\@namedef{subjclassname@2020}{%
		\textup{2020} Mathematics Subject Classification}
	\makeatother
	

	
	\begin{abstract}
    We investigate the asymptotic behavior as $t\to+\infty$ of solutions to a weighted porous medium equation in $ \mathbb{R}^N $, whose weight {$\rho(x)$} behaves at spatial infinity like $ |x|^{-\gamma} $ with subcritical power, namely  $ \gamma \in [0,2) $. Inspired by some results \cite{AR,KU} from the 1980s on the unweighted problem, we focus on solutions whose initial data $u_0(x)$ are not globally integrable with respect to the weight and behave at infinity like $ |x|^{-\alpha} $, for $\alpha\in(0,N-\gamma)$. {In the special case $ \rho(x)=|x|^{-\gamma} $ and $ u_0(x)=|x|^{-\alpha} $ we show that} self-similar solutions of Barenblatt type, i.e.~reminiscent of the usual source-type solutions, still exist, although they are no longer compactly supported. Moreover, they exhibit a transition phenomenon which is new even for the unweighted equation. We prove that such self-similar {solutions} are attractors for {the original problem}, and convergence takes place globally in suitable weighted $ L^p $ spaces for $p\in[1,\infty)$ and even globally in $L^\infty$ under some mild additional regularity assumptions on the weight. Among the fundamental tools that we exploit, it is worth mentioning a global smoothing effect for non-integrable data.
	\end{abstract}
	
\keywords{Porous medium equation; asymptotic behavior; slowly decaying data; weighted Lebesgue spaces; smoothing effects; uniform convergence.}

	\maketitle
	
	
	\section{Introduction}
	We study intermediate asymptotics of non-integrable solutions to the Cauchy problem for the following \emph{weighted porous medium equation}
	\begin{equation}\label{wpme}
		\begin{cases}
			\rho\, u_t = \Delta (u^m) & \text{in } \mathbb{R}^N\times (0,+\infty) \, , \\
			u  =  u_0 & \text{on } \mathbb{R}^N\times \{0\} \, ,
		\end{cases}
	\end{equation}
	for $m>1$, $N\geq 3$, and a measurable weight $\rho\equiv\rho(x)$ behaving like the power $|x|^{-\gamma}$ for some $\gamma\in [0,2)$, in a sense to be made precise below. {Note that \eqref{wpme} models nonlinear diffusion phenomena taking place in an {inhomogeneous} medium, so it can also be referred to as the \emph{inhomogeneous porous medium equation}.} Since we will be able to treat \emph{sign-changing solutions}, we implicitly set $u^m:=|u|^{m-1}u$ as is standard practice. We will consider $u_0(x)$ behaving roughly like the power $|x|^{-\alpha}$ for $\alpha\in \left(0,N-\gamma\right)$, which is \emph{not globally integrable} against the weight.
	
For the sake of readability, we will always work in dimension $ N \ge 3 $, where it is possible to employ a strategy that covers all cases. However, most of our main results also hold in lower dimensions (up to some unavoidable technical restrictions on the parameters), except that one needs to suitably adjust a few passages in the proofs.	 For more details, we refer to Remark \ref{N=2} below.
	
	In order to study asymptotic properties of such solutions, it is first necessary to settle basic well-posedness issues beyond the global weighted $ L^1 $ setting, the latter having been treated in \cite{RV1}. To this end, existence and uniqueness of solutions to \eqref{wpme} for non-integrable (and even growing) initial data in the spirit of B\'enilan, Crandall, and Pierre's work \cite{BCP} have recently been established by the first two named authors in \cite{MP}, where it was shown that initial data of order at most
	\begin{equation}\label{O}
		O\!\left(|x|^{\frac{2-\gamma}{m-1}}\right)\qquad\textrm{as}\;\;|x| \to +\infty
	\end{equation}
	give rise to \emph{local-in-time} solutions that may blow up in finite time. Nonetheless, in the same paper, it was shown that solutions taking initial data {that grow} strictly slower than \eqref{O} actually exist \emph{globally} in time, which is our case. {For a more precise description of such results and the relation with the present framework, we refer to Subsection \ref{setup}}.
	
In the classical unweighted case (i.e.~$ \rho \equiv 1 $), the asymptotic behavior of $L^1$ solutions was first established in Friedman and Kamin's fundamental work \cite{FK} and later completed in \cite{Vaz-euc}. Likewise, the asymptotics of solutions to the inhomogeneous problem \eqref{wpme} has been completely settled when $u_0$ is nonnegative and globally integrable with respect to the weight $\rho$, stemming from the pioneering one-dimensional study \cite{KR}. In the subcritical range $\gamma\in(0,2)$, Reyes and V\'{a}zquez in \cite{RV1, RV2} obtained corresponding convergence results to a suitable perturbation of the standard Barenblatt solution. Such solutions have the explicit form
	\begin{equation}\label{int-baren}
		U_B(x,t) =t^{-\lambda}\left(k_1-k_2\,t^{-\theta\lambda}\,|x|^{2-\gamma}\right)_+^{\tfrac{1}{m-1}},
	\end{equation}
	for computable constants $k_i>0$, where
	\begin{equation}\label{def-lambda-theta}
		\lambda =\frac{N-\gamma}{(N-\gamma)(m-1)+2-\gamma} \, , \qquad \theta =\frac{2-\gamma}{N-\gamma} \, ,
	\end{equation}
	are special self-similarity exponents that are also highly relevant to the works \cite{BCP,MP}.

For supercritical exponents $\gamma>2$ on the other hand, Kamin, Reyes, and V\'{a}zquez in \cite{KRV} showed that solutions behave asymptotically like a separate-variable profile where the spatial part satisfies a suitable semilinear elliptic equation. The critical case $\gamma=2$, for a globally bounded weight, has been addressed by Nieto and Reyes in \cite{NR}, where they prove convergence to a (logarithmic-type) Barenblatt solution with a singularity at the origin. When $\rho $ equals the pure power $ |x|^{-2}$, Iagar and S\'{a}nchez in \cite{IS1} proved some further interesting convergence results. In these works and others it has been established that the asymptotic behavior of solutions changes dramatically at $\gamma=2$ for all dimensions (see \cite{IS3} for $N=1,2$) and all $m\geq1$ (for the weighted heat equation see \cite{IS2}).
	
	A common approach in these results is to rescale both the weight and the initial datum, and as long as these objects suitably converge to positive multiples of pure powers along rescaling sequences -- see for example \eqref{weight-cond2} and \eqref{conv-resc-datum} below -- one may hypothesize that solutions of \eqref{wpme} converge as $ t \to +\infty $ to the {unique} solution of the following singular weighted problem:
	\begin{equation}\label{singularWPME}
		\begin{cases}
			c  \left|x\right|^{-\gamma} \partial_t \, \mathcal{U}_\alpha  = \Delta (\mathcal{U}_\alpha^m) & \text{in } \mathbb R^N\times (0,+\infty) \, , \\
		\mathcal{U}_\alpha =  b \left|x\right|^{-\alpha} & \text{on } \mathbb R^N\times \{ 0 \} \, ,
		\end{cases}
	\end{equation}
 where $ b,c>0 $. Indeed, it is true, although perhaps less well-known, that asymptotic results can also be obtained for non-integrable solutions using a similar scaling method. {In this direction, we quote~\cite{AR} by Alikakos and Rostamian, and second, \cite{KU} by Kamin and Ughi, both of which study the $\rho\equiv1$ framework. We also mention \cite{KP-0,KP}, where similar techniques were applied to the heat equation and the porous medium equation with \emph{absorption}.}

 Since \cite{AR} is our closest precedent, it is worth discussing their results in more detail. The authors prove that every nonnegative initial datum $u_0\in L^1_{\mathrm{loc}}\!\left(\R^N\right)$ behaving like $|x|^{-\alpha}$ as $ |x| \to +\infty $, for some $\alpha \in \left(-{2}/{(m-1)},N\right)$, gives rise to a solution $u$ of \eqref{wpme} {with $\rho \equiv 1$} that converges to the solution $\mathcal{U}_\alpha$ of \eqref{singularWPME} in the sense
	\begin{equation*}
		\lim_{t\to + \infty} t^{\lambda_\alpha} \left| u(x,t)-\mathcal{U}_\alpha(x,t) \right|=0 \, ,
	\end{equation*}
	uniformly on expanding sets of the type $ \left\{ |x|\leq C\, t^{\lambda_\alpha/\alpha} \right\}$, where $C>0$ is arbitrary and $\lambda_\alpha$ is explicitly defined in \eqref{lambdaa} (with $\gamma=0$ for this exposition).
By use of the celebrated Aronson-Caffarelli estimate \cite{AC} for continuous \emph{nonnegative} solutions, the authors also prove that such a condition on $u_0$ is optimal for their convergence results. We stress that this delicate estimate is not currently known in our weighted framework, and only applies to nonnegative solutions. We finally mention \cite{KU} where, among several results, the critical case $ \alpha = N $ was treated showing suitable convergence to the classical Barenblatt solution under a nonstandard scaling transformation. 

In the present paper, we manage to extend many of the above results to our weighted framework and actually improve them in a few different directions. The presence of a nontrivial weight $\rho$ implies several technical difficulties that we are able to overcome. In particular, the Aronson-Caffarelli estimate, the Aronson-B\'enilan inequality and \emph{a priori} H\"{o}lder regularity estimates (down to the origin) are in general not available. Furthermore, in contrast to \cite{AR} and \cite{KU}, we \emph{do not} need our solutions to be everywhere \emph{nonnegative}, although the assumptions we make on the initial data certainly imply that they are ``mostly'' positive at infinity. We further require that the initial datum $u_0$ and its rescalings are uniformly bounded in a new normed space defined through \eqref{norm-trasl}. This norm seems to be the natural weighted counterpart of the translation-invariant object introduced in \cite[Proposition 1.3]{BCP}, guaranteeing via a \emph{smoothing effect} that solutions are \emph{globally bounded} away from $t=0$; to this end, we refer to Proposition \ref{p-transl}, which may be of independent interest. Under these assumptions we are able to prove  asymptotic convergence to the solution of \eqref{singularWPME} which, as in the well-established case of~\eqref{int-baren}, has a self-similar structure, but this time of the type
$$
\mathcal{U}_\alpha(x,t)=t^{-\lambda_\alpha}g_\alpha\!\left( t^{-\frac{\lambda_\alpha}{\alpha}}|x|\right) .
$$
The smooth radial profile $ r \mapsto g_\alpha(r)$ turns out to satisfy a nonlinear
ODE that we study independently in Section \ref{ODE} because it had never been addressed before with the level of detail we need here. Such solutions $ \mathcal{U}_\alpha $ exhibit a few fundamental differences with respect to the standard Barenblatt solutions, besides no longer being explicit. First, the profile is strictly positive in the whole $\R^N $, and moreover in an explicit range of $\alpha$ close to $0$ it is \emph{everywhere decreasing} in time. {This is} in sharp contrast with the wave-like propagation property {described in Remark \ref{dich-rem}, which is characteristic} of compactly-supported Barenblatt profiles. The latter appears to be a completely new phenomenon.

\medskip

We prove two main asymptotic results. First, in Theorem \ref{main thm} we establish convergence in suitable \emph{global weighted $L^p$ spaces} for $p\in[1,\infty)$; then, in Theorem \ref{thm-unif-conv} we prove \emph{global $L^\infty$ convergence} under some additional (mild) regularity requirements on the weight $\rho$. We stress that both of these main theorems are new even in the unweighted case, since the results of \cite{AR} were purely local (in rescaled variables). Moreover, they are the first asymptotics results for non-globally-integrable (and possibly sign-changing) solutions of a weighted porous medium equation.
	
	
\section{Preliminary material and statements of the main results}\label{prelim}
	
In the following, we make some fundamental assumptions on the data and introduce basic functional quantities and definitions that will be used throughout. Finally, at the end of the section, we state the main results of the paper.

\subsection{Conditions on the weight}
\label{cond-1}
As in \cite{MP}, we require that the weight (or density) $\rho$ be measurable and satisfy the pointwise bounds
\begin{equation}\label{weight-cond}
\underline{C} \left( 1 + |x|  \right)^{-\gamma}  \le \rho(x) \le \overline{C} \left| x \right|^{-\gamma} \qquad \text{for a.e. } x \in \mathbb{R}^N \, ,
\end{equation}
for some $\gamma\in [0,2)$ and ordered constants $ \underline{C} , \overline{C} >0$. Additionally, we require that at infinity it behaves \emph{precisely} like the power $ |x|^{-\gamma} $, in the sense that there exists $ c > 0 $ such that, for every sequence $ \xi_k \to +\infty $, it holds
	\begin{equation}\label{weight-cond2}
		\lim_{k \to \infty} \left\| \rho_k - c \, |x|^{-\gamma}  \right\|_{L^1_{\mathrm{loc}} \left( \mathbb{R}^N \right)} = 0 \, ,
	\end{equation}
	where we introduce the rescaled density
\begin{equation}\label{rescaled-weight}
\rho_k(x) :=  \xi^\gamma_k \, \rho\!\left(\xi_k x\right) .
\end{equation}	
Assumption \eqref{weight-cond2} may be equivalently written as
	\begin{equation*}\label{weight-cond2-bis}
		\lim_{\xi \to +\infty} \xi^{\gamma-N} \int_{B_{\xi R}} \left| \rho(y) - c \, |y|^{-\gamma} \right| dy = 0 \qquad \forall R>  0 \, ,
	\end{equation*}
where $ B_r $ is the ball of radius $r>0$ centered at the origin. It is not difficult to check that, under~\eqref{weight-cond}, condition \eqref{weight-cond2} is implied (for instance) by
	\begin{equation}\label{weight-cond3}
	\underset{|x| \to +\infty}{\operatorname{ess}\lim} \, |x|^{\gamma} \, \rho(x) = c \, .
	\end{equation}		
Note that the rescaled density $ \rho_k $, for all $ k $ so large that $ \xi_k \ge 1 $, still satisfies \eqref{weight-cond} with the same constants:
\begin{equation}\label{weight-cond-scaled}
\underline{C} \left( 1 + |x|  \right)^{-\gamma}  \le \rho_k(x) \le \overline{C} \left| x \right|^{-\gamma} \qquad \text{for a.e. } x \in \mathbb{R}^N \, .
\end{equation}

	\subsection{A key functional space}\label{kse}
	For a given  $ \rho $ satisfying \eqref{weight-cond} and any $ f \in L^1_{\mathrm{loc}}\!\left( \mathbb{R}^N , \rho \right) $, let us introduce the following norm:
	\begin{equation}\label{norm-trasl}
		\left\| f \right\|_{0,\rho} := \sup_{\substack{ R \ge 1 \\[0.4mm] z_R \in \partial B_{R} }} R^{-\frac{\gamma(N-2)}{2}} \int_{B_{R^{{\gamma}/{2}}}(z_R)} \left|f(x)\right| \rho(x) \, dx \, ,
	\end{equation}
where $ B_r(z) $ is the ball of radius $ r>0 $ centered at $ z \in \mathbb{R}^N $, and we omit the argument when $ z \equiv 0 $. Note that the set of all functions $ f \in L^1_{\mathrm{loc}}\!\left( \mathbb{R}^N , \rho \right) $ such that $ \left\| f \right\|_{0,\rho} < +\infty $ is a Banach space in which both $ L^1\!\left( \mathbb{R}^N , \rho \right) $ and $ L^\infty\!\left( \mathbb{R}^N \right) $ are strictly contained.

In the unweighted case, {i.e.}~$ \gamma=0 $, we observe that $ \| f \|_{0,\rho} $ is equivalent to the \emph{translation-invariant} norm
	\begin{equation}\label{norm-trasl-unw}
	\sup_{z \in \mathbb{R}^N} \int_{B_{1}(z)} \left|f(x)\right| dx \, ,
	\end{equation}
	which was defined in \cite[Proposition 1.3]{BCP}. The latter proved to be the right tool in order to ensure \emph{global boundedness} of the solutions to \eqref{wpme} (when $ \rho \equiv 1 $) for a class of initial data larger than $ L^1 $. However, because the introduction of $ \rho $ breaks translation invariance, it is not obvious {\emph{a priori}} what the analogue of \eqref{norm-trasl-unw} should be. As we will see in Subsection \ref{smooth}, the norm \eqref{norm-trasl} represents the correct answer; a crucial point to this lies in the fact that a usual \emph{cut-off} function supported in $ B_{R^{{\gamma}/{2}}}(z_R) $ has a Laplacian that behaves like $ R^{-\gamma} $ for $ R $ large, exactly the same decay as $ \rho $.

\subsection{Assumptions on the initial data}
First of all we require that $ u_0 \in L^1_{\mathrm{loc}}\!\left(\mathbb{R}^N ,\rho \right)$. Besides that, the kind of initial data we have in mind are those that essentially behave like $ |x|^{-\alpha} $ as $ |x| \to +\infty $, in the sense that
	\begin{equation}\label{datum-cond3}
	\underset{|x| \to +\infty}{\operatorname{ess}\lim} \, |x|^{\alpha} \, u_0(x) = b \, .
	\end{equation}
In order to study convergence in $L^p$ spaces when $ p \in [1,\infty) $, we need to introduce a suitable weight $ \Phi : \mathbb{R}^N \to \mathbb{R}^+ $  so that the solution $u$ belongs to the weighted space $C\big((0,+\infty);L^p\big(\mathbb{R}^N,\Phi|x|^{-\gamma}\big) \big)$. In this way, we can give a rigorous meaning to the statements of our main asymptotic results. However, not all weights are adequate for this purpose.  Given $ \alpha \in (0,N-\gamma) $, we say that a weight is \emph{admissible} if it satisfies the following conditions:
\begin{gather}
\Phi \in C^2\!\left( \mathbb{R}^N \right) \cap L^\infty\!\left(\mathbb{R}^N\right) , \label{A1}  \\  \left| \nabla \Phi(x) \right| \le \frac{K}{\left(1+|x|\right)^{\gamma-1}} \quad \text{and}  \quad  \left| \Delta \Phi (x) \right| \le K \, \frac{\Phi(x)}{\left(1+|x|\right)^\gamma} \qquad \forall  x \in \mathbb{R}^N \, , \label{A2} \\
 \int_{\mathbb{R}^N} \frac{\Phi(x)}{|x|^{\alpha + \gamma}} \, dx < + \infty  \, , \label{A3}
\end{gather}
for some constant $K>0$. We emphasize that \eqref{A3} is unavoidable as long as we deal with decay rates of the type $ |x|^{-\alpha} $, whereas \eqref{A2} is required for purely technical reasons. A significant example of an admissible weight is
\begin{equation}\label{spec-choice}
\Phi(x) = \left( 1 + |x|^2 \right)^{-\frac{N+\varepsilon-\alpha-\gamma}{2}} ,
\end{equation}
for any $ \varepsilon>0 $.

Once we have introduced the weights $\Phi$, we allow the initial data to satisfy \eqref{datum-cond3} only in a related integral sense. Thus, instead of~\eqref{datum-cond3} we require that there exist $ b>0 $ and an admissible weight $ \Phi $ such that, for every sequence $ \xi_k \to +\infty $, it holds
\begin{equation}\label{conv-resc-datum}
\lim_{k \to \infty} \int_{\mathbb{R}^N} \left| \xi^\alpha_k \, u_0\!\left(\xi_k x\right) - b \left| x \right|^{-\alpha} \right| \Phi(x) \, \rho_k(x) \, dx = 0 \, ,
\end{equation}
or equivalently
\begin{equation}\label{conv-resc-datum-bis}
\lim_{\xi  \to +\infty} \xi^{\alpha+\gamma-N} \int_{\mathbb{R}^N} \left| u_0\!\left( y \right) - b \left| y \right|^{-\alpha} \right| \Phi\!\left( \tfrac{y}{\xi} \right) \rho(y) \, dy = 0 \, .
\end{equation}
In other words, we are assuming that the \emph{rescaled} initial datum
	\begin{equation}\label{rescaled-datum}
u_{0k}(x) := \xi^\alpha_k \, u_0\!\left(\xi_k x\right)
\end{equation}
approximates the \emph{singular} datum $ |x|^{-\alpha} $ (up to constants) as $ k \to \infty $, and the accuracy of such an approximation is measured via the weight $ \Phi $. Clearly, if \eqref{datum-cond3} holds then \eqref{conv-resc-datum} is always satisfied, for any admissible weight (see Proposition \ref{datum} for more details). However, it is apparent that \eqref{conv-resc-datum} permits $u_0$ to deviate much more from $ b \, |x|^{-\alpha} $ than \eqref{datum-cond3} does.

In addition to the asymptotic assumption \eqref{conv-resc-datum}, we will also require that
\begin{equation}\label{second-req-u0}
\sup_{k \in \mathbb{N}} \left\| u_{0k} \right\|_{0,\rho_k} < +\infty \, ,
\end{equation}
for every sequence $ \xi_k \to +\infty $. This turns out to be crucial in order to ensure \emph{global boundedness} of the solutions  to \eqref{wpme}. After some standard manipulations, it is not difficult to show that \eqref{second-req-u0} is equivalent to
\begin{equation}\label{second-req-u0-bis}
\limsup_{\xi \to +\infty} \, \xi^{\alpha-\frac{N(2-\gamma)}{2}} \sup_{\substack{ R \ge \xi \\[0.4mm] z_R \in \partial B_{R} }} R^{-\frac{\gamma(N-2)}{2}} \int_{B_{\xi^{{(2-\gamma)}/{2}} R^{ \gamma / 2} }(z_R)} \left| u_0(y) \right| \rho(y) \, dy  < + \infty \, .
\end{equation}
Although \eqref{second-req-u0-bis} may appear complicated, it is nothing but a way to  control quantitatively the behavior of the (weighted) integral of $ \left| u_0(y) \right| $ on balls of growing radius, uniformly with respect to their center. The finiteness of the $ \limsup $ in \eqref{second-req-u0-bis} amounts to requiring that such a behavior is not worse than the one achieved by $ |y|^{-\alpha} $: indeed, a simple computation, which is carried out in the proof of Proposition~\ref{datum}, yields
\begin{equation*}\label{asymp-pow}
\sup_{\substack{ R \ge \xi \\[0.4mm] z_R \in \partial B_{R} }} R^{-\frac{\gamma(N-2)}{2}} \int_{B_{\xi^{{(2-\gamma)}/{2}} R^{ \gamma / 2} }(z_R)} |y|^{-\alpha-\gamma} \,  dy \sim \xi^{\frac{N(2-\gamma)}{2} - \alpha} \qquad \text{as } \xi \to + \infty \, .
\end{equation*}
Finally, for our \emph{uniform convergence} results, it will be crucial to reinforce \eqref{second-req-u0} by requiring
\begin{equation}\label{second-req-u0-re}
\lim_{k \to \infty}\left\| u_{0k} - b \, |x|^{-\alpha} \right\|_{0,\rho_k} =0
\end{equation}
for some $ \alpha \in (0,N-\gamma) $ and $ b>0 $ or, equivalently,
\begin{equation*}\label{second-req-u0-bis-re}
\lim_{\xi \to +\infty} \, \xi^{\alpha-\frac{N(2-\gamma)}{2}} \sup_{\substack{ R \ge \xi \\[0.4mm] z_R \in \partial B_{R} }} R^{-\frac{\gamma(N-2)}{2}} \int_{B_{\xi^{{(2-\gamma)}/{2}} R^{ \gamma / 2} }(z_R)} \left| u_0(y) - b|y|^{-\alpha} \right| \rho(y) \, dy  = 0  \, .
\end{equation*}

\begin{rem}\rm
	It is natural to ask if the requirements  \eqref{conv-resc-datum} and \eqref{second-req-u0} on the initial datum  are to some extent redundant. This is indeed the case  for certain admissible weights $\Phi$ and for certain $\alpha\in(0,N-\gamma)$. First of all, let us assume that $ \Phi $ is of the type \eqref{spec-choice} for some $\varepsilon>0$ to be chosen later. Using
	\begin{equation*}
		\left|u_{0k}(x)\right| \leq \left|u_{0k}(x)-b\, |x|^{-\alpha} \right|+b \, |x|^{-\alpha}
	\end{equation*}
	and noticing that $ \norm{|x|^{-\alpha}}_{0,\rho_k} $ is plainly bounded, in order to prove that $\{u_{0k}\}$ satisfies \eqref{second-req-u0} under \eqref{conv-resc-datum}, it is enough to concentrate on the first term on the right-hand side. To this end, we also point out the simple estimate
	\begin{equation*}\label{rem lowerbound}
		C\,R^{-N-\varepsilon+\alpha+\gamma}\leq \Phi(x) \qquad \text{in } B_{R^{\gamma / 2}}\!\left(z_R\right) ,
	\end{equation*}
	for some $C>0$ independent of $R\geq1$. Then
	\begin{align*}
		\int_{B_{R^{{\gamma}/{2}}}(z_R)} \left|u_{0k} -b \, |x|^{-\alpha}\right| \rho_k \, dx & \leq \frac{R^{N+\varepsilon-\alpha-\gamma}}{C} \int_{B_{R^{{\gamma}/{2}}}(z_R)} \left|u_{0k} -b\,|x|^{-\alpha}\right| \Phi \, \rho_k \, dx \\
		&\leq \frac{R^{N+\varepsilon-\alpha-\gamma}}{C} \int_{\R^N} \left|u_{0k} -b\,|x|^{-\alpha}\right| \Phi \, \rho_k \, dx \, .
	\end{align*}
	Therefore, recalling  \eqref{norm-trasl}, we can infer:
	\begin{align*}
		\sup_{k \in \mathbb{N}} \left\| u_{0k}  -  b \, |x|^{-\alpha} \right\|_{0,\rho_k} \leq C^{-1} \sup_{k \in \mathbb{N}} \sup_{ R \ge 1 } R^{N+\varepsilon-\alpha-\gamma-\frac{\gamma(N-2)}{2}}\int_{\R^N} \left|u_{0k}-b \, |x|^{-\alpha}\right| \Phi \, \rho_k \, dx \, ,
	\end{align*}
	so that, thanks to \eqref{conv-resc-datum}, the right-hand side  is finite provided
	$$
 \tfrac{2-\gamma}{2} \, N  <  \alpha < N-\gamma 	\qquad \text{and} \qquad 0 < \varepsilon \le  \alpha -  \tfrac{2-\gamma}{2} \, N  \, .
	$$
On the other hand, it is not difficult to check that for any $ \alpha \in \big( 0 , \tfrac{2-\gamma}{2} N \big) $ the function
$$
u_0(x) = b \left| x \right|^{-\alpha} + \sum_{j=1}^\infty \chi_{B_{r_j}(z_j)}(x) \quad \text{with } |z_j|= e^j \text{ and } r_j = j e^{\frac{\gamma}{2}j}
$$	
satisfies \eqref{conv-resc-datum} for all weights of the type \eqref{spec-choice} but does not comply with \eqref{second-req-u0}, so in general the implication \eqref{conv-resc-datum} $\Rightarrow$ \eqref{second-req-u0} need not be true. As for the opposite implication, it is enough to take $ u_0(x)=b' \, |x|^{-\alpha} $ with $ b' \neq b $ or even a compactly supported datum to see that  \eqref{second-req-u0}  holds but \eqref{conv-resc-datum} fails.
\end{rem}
			
\subsection{The non-integrable self-similar solution}\label{self-sim}		
First of all, let us define an important exponent that will be used frequently in the sequel: for any given $ \alpha \in (0,N-\gamma) $,
	\begin{equation}\label{lambdaa}
		\lambda_\alpha=\frac{\alpha}{\alpha(m-1)+2-\gamma} \, .
	\end{equation}
Our main goal is to prove that, if the density $ \rho(x) $ asymptotically behaves like $ |x|^{-\gamma} $ and the initial datum $ u_0(x) $ asymptotically behaves like $ |x|^{-\alpha} $, then the corresponding solution of \eqref{wpme} suitably converges to the solution of the singular problem \eqref{singularWPME}. In the next theorem, which will be proved in Section \ref{ODE}, we see that such a solution has a special \emph{self-similar} shape, reminiscent of the classical \emph{Barenblatt solutions} \eqref{int-baren} (but with some substantial differences). Its construction relies entirely on the resolution of the following ODE problem:
	\begin{equation}\label{ode1}
		\begin{cases}
			(g_\alpha^m)^{''}(r)+\frac{N-1}{r} \, (g_\alpha^m)^{'}(r)+c \, r^{-\gamma}\left[\frac{\lambda_\alpha}{\alpha} \, r \, g_\alpha'(r)+\lambda_\alpha \, g_\alpha(r)\right]=0 & \text{for } r>0 \, , \\
			g_\alpha'(r) = o\!\left(r^{-\frac \gamma 2}\right) & \text{as } r \to 0^+ , \\
			g(r)>0 & \text{for } r \ge 0 \, , \\
			\lim_{r \to +\infty}r^{\alpha} \, g_\alpha(r)=b \, .
		\end{cases}
	\end{equation}
The detailed well-posedness study of \eqref{ode1} will also be carried out in Section \ref{ODE}.

\begin{thm}[Non-integrable self-similar solutions]\label{selfsim-sol}
Let $N\geq3$, $m>1$, $ \gamma \in [0,2) $, $ \alpha \in (0,N-\gamma) $ and $b,c>0$. Let $  g_\alpha \in C^2((0,+\infty)) \cap C([0,+\infty)) $ be the solution of \eqref{ode1}. Then the unique solution of problem \eqref{singularWPME} is
	\begin{equation}\label{Barenblatt}
		\mathcal{U}_\alpha(x,t)=t^{-\lambda_\alpha} g_\alpha\!\left(t^{-\frac{\lambda_\alpha}{\alpha}} |x|\right) ,
	\end{equation}
which satisfies
	\begin{equation}\label{u-alpha-1}
\mathcal{U}_\alpha \in AC_{\mathrm{loc}}\!\left([0,+\infty);L^1_{\mathrm{loc}}\!\left(\R^N,|x|^{-\gamma}\right)\right)
\end{equation}
and the two-sided estimate
\begin{equation}\label{u-alpha-2}
\frac{c_1 \, b }{ b^{(m-1)\lambda_\alpha} \, t^{\lambda_\alpha} +  |x|^\alpha}\le \mathcal{U}_\alpha(x,t) \le \frac{c_2 \, b }{ b^{(m-1)\lambda_\alpha} \, t^{\lambda_\alpha} +  |x|^\alpha} \, ,
\end{equation}
for some constants $ c_1,c_2>0 $ depending only on $ N,m,\gamma,\alpha,c $. Moreover, the following dichotomy holds:
		\begin{itemize}
		
\item if $ \alpha \in \left( 0 , \tfrac{N-2}{m} \right]  $ then $ \partial_t\,\mathcal{U}_\alpha < 0 $ in $ \mathbb{R}^N \times (0,+\infty) $;

\item if $ \alpha \in \left( \tfrac{N-2}{m} , N-\gamma \right)  $ then there exists $ r^\ast \in (0,+\infty) $ such that
$$
\partial_t \, \mathcal{U}_\alpha < 0 \quad \text{in } \left\{ |x| < r^\ast  \, t^{\frac{\lambda_\alpha}{\alpha}} \right\} \qquad  \text{and} \qquad \partial_t \, \mathcal{U}_\alpha > 0 \quad \text{in } \left\{ |x| > r^\ast  \, t^{\frac{\lambda_\alpha}{\alpha}} \right\} .
$$
		\end{itemize}
\end{thm}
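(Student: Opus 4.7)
The plan is to verify the four claims of the theorem in order, taking as input the existence and qualitative properties of the ODE profile $g_\alpha$ established in Section \ref{ODE}. Set $r:=t^{-\lambda_\alpha/\alpha}|x|$. A direct chain-rule computation gives
\begin{equation*}
\partial_t \mathcal{U}_\alpha = -t^{-\lambda_\alpha-1}\!\left[\lambda_\alpha g_\alpha(r)+\tfrac{\lambda_\alpha}{\alpha}\, r\, g_\alpha'(r)\right], \qquad \Delta(\mathcal{U}_\alpha^m) = t^{-m\lambda_\alpha-\frac{2\lambda_\alpha}{\alpha}}\!\left[(g_\alpha^m)''(r)+\tfrac{N-1}{r}(g_\alpha^m)'(r)\right],
\end{equation*}
and the exponent $\lambda_\alpha$ in \eqref{lambdaa} is chosen precisely so that the powers of $t$ balance after multiplying the first expression by $c|x|^{-\gamma}=c\,r^{-\gamma}\,t^{-\gamma\lambda_\alpha/\alpha}$; the PDE then reduces identically to the ODE in \eqref{ode1}. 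The initial trace $b|x|^{-\alpha}$ is recovered from the boundary condition $r^\alpha g_\alpha(r)\to b$, and dominated convergence based on the upper bound in \eqref{u-alpha-2} (proven in the next step) yields the weighted absolute continuity stated in \eqref{u-alpha-1}. Uniqueness within this class is a consequence of the weighted comparison principle developed in \cite{MP}, the singular datum $b|x|^{-\alpha}$ being locally integrable against $|x|^{-\gamma}$ since $\alpha+\gamma<N$.

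For the two-sided estimate \eqref{u-alpha-2}, I would first record the canonical pointwise bound $a_1(1+r^\alpha)^{-1}\le g_\alpha(r)\le a_2(1+r^\alpha)^{-1}$ (valid for the normalization $b=1$) established in Section \ref{ODE}, and then exploit the scaling invariance of \eqref{ode1}: the transformation $g\mapsto A\,g(\mu\,\cdot)$ preserves the equation whenever $A^{m-1}=\mu^{\gamma-2}$, and imposing $r^\alpha g\to b$ at infinity selects the unique pair $\mu = b^{-(m-1)\lambda_\alpha/\alpha}$, $A=b^{(2-\gamma)\lambda_\alpha/\alpha}$. A short algebraic manipulation converts the $b=1$ bound into
\begin{equation*}
\frac{c_1\, b}{b^{(m-1)\lambda_\alpha}+r^\alpha}\;\le\; g_\alpha(r)\;\le\;\frac{c_2\, b}{b^{(m-1)\lambda_\alpha}+r^\alpha},
\end{equation*}
and substitution into \eqref{Barenblatt} with $r=t^{-\lambda_\alpha/\alpha}|x|$ immediately yields \eqref{u-alpha-2}.

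Finally, setting $h(r):=r^\alpha g_\alpha(r)$, the derivative computed in the first step rewrites as $\partial_t\mathcal{U}_\alpha(x,t) = -\tfrac{\lambda_\alpha}{\alpha}\,t^{-\lambda_\alpha-1}\,r^{1-\alpha}\,h'(r)$, so the sign of $\partial_t \mathcal{U}_\alpha$ at $(x,t)$ is opposite to that of $h'(r)$. The claimed dichotomy thus reduces to the qualitative statement that $h$ is strictly increasing on $(0,+\infty)$ when $\alpha\le (N-2)/m$, and otherwise has exactly one critical point --- necessarily a maximum at some $r^\ast>0$, since $h(0^+)=0$ and $h(+\infty)=b>0$. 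This monotonicity analysis is the genuinely delicate point and the main obstacle of the proof; it must be carried out directly from \eqref{ode1} in Section \ref{ODE}, with the threshold $(N-2)/m$ governing the competition between the diffusion term $(g_\alpha^m)''+\tfrac{N-1}{r}(g_\alpha^m)'$ and the drift term $c r^{-\gamma}[\tfrac{\lambda_\alpha}{\alpha} r g_\alpha'+\lambda_\alpha g_\alpha]$ far from the origin. Steps one through three are essentially bookkeeping once this ODE input and the results of \cite{MP} are in hand.
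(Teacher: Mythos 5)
Your overall route coincides with the paper's: verify the PDE by the chain rule, get \eqref{u-alpha-2} from the normalized profile bound plus the scaling $A^{m-1}=\mu^{\gamma-2}$ (your exponents for $A,\mu$ are correct), and read the dichotomy off the sign of $\Theta'(r)=\left(r^\alpha g_\alpha(r)\right)'$, deferring the monotonicity of $\Theta$ to the ODE analysis, exactly as the paper does via Proposition \ref{asymptotics prop}. However, there is a genuine gap in how you dispose of \eqref{u-alpha-1}. Dominated convergence based on the bound \eqref{u-alpha-2} can only give continuity of $t\mapsto\mathcal{U}_\alpha(t)$ in $L^1_{\mathrm{loc}}\!\left(\R^N,|x|^{-\gamma}\right)$ down to $t=0$; absolute continuity requires $\partial_t\,\mathcal{U}_\alpha$ to be locally integrable in time with values in that space, i.e.\ the estimate \eqref{ut-L1}, and \eqref{u-alpha-2} controls $\mathcal{U}_\alpha$, not its time derivative. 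Worse, the crude pointwise bound one gets from $g_\alpha(r)\lesssim r^{-\alpha}$ and $r|g_\alpha'(r)|\lesssim r^{-\alpha}$ is $\left|\partial_t\,\mathcal{U}_\alpha\right|\lesssim t^{-1}|x|^{-\alpha}$, which is \emph{not} integrable in $t$ near $0$. The finiteness of $\int_0^T\!\int_{B_R}|\partial_t\,\mathcal{U}_\alpha|\,|x|^{-\gamma}\,dx\,dt$ rests on the cancellation $\tfrac{\lambda_\alpha}{\alpha}rg_\alpha'+\lambda_\alpha g_\alpha=\tfrac{\lambda_\alpha}{\alpha}r^{1-\alpha}\Theta'(r)$ together with the sign information on $\Theta'$ from the dichotomy, the identity \eqref{idA} expressing $c\,\tfrac{\lambda_\alpha}{\alpha}\Theta'(r)\,r^{N-\alpha-\gamma}=-\left(r^{N-1}(g_\alpha^m)'\right)'(r)$, and an explicit integration exploiting $g_\alpha(s)\sim s^{-\alpha}$ at infinity; this computation is the substantive content of the paper's verification of \eqref{u-alpha-1} and is missing from your proposal.

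A second, smaller omission concerns the origin. Your chain-rule verification only shows that $\mathcal{U}_\alpha$ solves the equation for $x\neq0$; to conclude that it is a solution in the sense of Definition \ref{wpme sol} (and hence lies in the uniqueness class of \cite{MP}) one must rule out a Dirac mass of $\Delta\!\left(\mathcal{U}_\alpha^m\right)$ at $x=0$. This is exactly where the boundary condition $g_\alpha'(r)=o\!\left(r^{-\gamma/2}\right)$ in \eqref{ode1} enters: it yields $r^{N-1}\left(g_\alpha^m\right)'(r)\to0$ as $r\to0^+$, so no singular contribution appears. That this is not automatic is illustrated by Remark \ref{purepowerode}, where the pure power $b\,r^{-(N-2)/m}$ satisfies the ODE away from the origin yet its associated self-similar function solves the PDE only in $\mathbb{R}^N\setminus\{0\}$. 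With these two points supplied, the rest of your argument (the scaling derivation of \eqref{u-alpha-2} and the reduction of the dichotomy to the monotonicity of $\Theta$) matches the paper.
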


The concept of solution to \eqref{singularWPME}, or the more general \eqref{wpme}, is understood in the sense of \cite{MP}, and will be made clear in the next subsection. The uniqueness of $ \mathcal{U}_\alpha $, claimed in the above theorem, holds in a wide class of solutions as will be summarized in the next subsection. Also, let us observe that formula \eqref{Barenblatt} implies, in particular, that $ \mathcal{U}_\alpha $ enjoys the \emph{self-similar} property
	\begin{equation}\label{scale invar}
		\mathcal{U}_\alpha(x,t)=\xi^\alpha  \, \mathcal{U}_\alpha\!\left( \xi x, \xi^{\frac{\alpha}{\lambda_\alpha}} \, t \right)
	\end{equation}
	for all $ \xi > 0 $, which will be of crucial importance to our main purposes.

	\begin{rem}[A noteworthy transition effect]\rm \label{dich-rem}
		The dichotomy stated in Theorem \ref{selfsim-sol} is to our knowledge new even for the unweighted equation, and is an example of a \emph{transition phenomenon} in the following sense. For $\gamma=0$, according to \cite{FK, Vaz-euc}, when the initial datum is globally integrable, the asymptotic attractor for the problem is the  Barenblatt solution \eqref{int-baren}. The same is true for $ \gamma \in (0,2) $ as shown in \cite{RV2}. It can be calculated that the time derivative of such a solution has the same qualitative behavior as $\partial_t \, \mathcal{U}_\alpha$ for $\alpha\in\left( \tfrac{N-2}{m} , N-\gamma \right)$. That is, as $\alpha$ approaches the  threshold value $N-\gamma$ from below, it is still non-integrable but its behavior resembles more and more that of the integrable Barenblatt solution.
		
Next, notice that the critical value $\alpha=\frac{N-2}{m}$ does not depend on the weight. This value seems to be strictly related to (diffusive) properties of the Laplacian in $\R^N$; as a preliminary observation, we point out that the stationary function $ \hat{u}(x,t)=|x|^{-\alpha}$ is a weak \emph{supersolution} of the differential equation in \eqref{wpme}
		if and only if $\alpha\in \left(0,\tfrac{N-2}{m}\right]$. {In this regard}, see also Remark \ref{purepowerode}.  Clearly, here $\rho$ can be any positive function, so it plays no role. Furthermore, still in the critical case $ \alpha = \tfrac{N-2}{m} $ we have that $ \hat{u}^m $ is (up to constants) exactly the \emph{Green's function} associated with $ -\Delta $.
		
Finally, as a heuristic interpretation, if $ \alpha > \tfrac{N-2}{m} $ the solution is sufficiently small at infinity to support a wave-like behavior: that is, fixing a point away from the origin, it is increasing there up to a certain time, after which it becomes monotone decreasing (the wave passes by the point). If $\alpha\in\big(0,\tfrac{N-2}{m}\big]$ instead, the solution decays so slowly at infinity that this short-time increase is blocked.
	\end{rem}

	\subsection{Well-posedness and setup for the asymptotic analysis}\label{setup}
	Before stating our main results, we introduce the following notion of solution to \eqref{wpme}, which is adapted from \cite[Definition 2.1]{MP}. For notational purposes, when writing $ u(t) $ we refer to the whole solution at time $t$ as a spatial function.
	
\begin{den}\label{wpme sol}
	Let  $N\geq3$, $m>1$ and $\rho$ be a measurable function satisfying \eqref{weight-cond} with respect to some $\gamma\in[0,2)$ and $\underline{C},\overline{C}>0$. Let $u_0\in L^1_{\mathrm{loc}}\!\left(\mathbb{R}^N,\rho\right)$. Then we say that a function $ u $ is a solution of problem \eqref{wpme} if
	$$
	u \in C\!\left([0,+\infty);L^1_{\mathrm{loc}}\!\left(\mathbb{R}^N,\rho\right)\right) , \quad \nabla u^m \in L^2_{\mathrm{loc}}\!\left( \mathbb{R}^N \times (0,+\infty) \right) , \quad u(0) = u_0 \, ,
	$$
	and
	\begin{equation*}\label{q50w}
		\int_0^{+\infty} \int_{\mathbb{R}^N} u \, \phi_t \, \rho   \, dx dt = \int_0^{+\infty} \int_{\mathbb{R}^N}  \nabla u^m \cdot \nabla \phi\, dx dt
	\end{equation*}
	for all $\phi\in C^\infty_c\!\left(\mathbb{R}^N\times (0, +\infty)\right)$.
\end{den}

The above definition is in fact slightly stronger than the one given in \cite{MP}, since we make the extra requirement that solutions have a finite \emph{local weak energy} (i.e.~the local $ L^2 $ norm of the gradient of $ u^m $). This will be relevant to be able to apply \emph{local H\"{o}lder estimates} away from the origin. We will prove in Lemma \ref{constr-weak} that the kind of solutions we deal with, namely the ones constructed in \cite{MP}, are actually local weak energy solutions -- see also Remark \ref{loc-weak}.

Note that the lifetime of our solutions is always $ T = +\infty $, which is consistent with the fact that we will treat initial data that \emph{do not} have a critical growth. More precisely, it was shown in \cite[Theorem 2.2]{MP} that the condition
\begin{equation}\label{growth-norm}
	\left\| u_0 \right\|_{1, r}  := \sup_{R\geq r} {R^{-\frac{2-\gamma}{m-1} -N+\gamma }} \int_{B_R} \left| u_0(x) \right| \rho(x) \, dx < + \infty
\end{equation}
is sufficient (and to some extent necessary) to have existence of a solution to \eqref{wpme}, and the latter is ensured to exist at least up to
$$
T \sim \frac{1}{\left\| u_0 \right\|_{1, r}^{m-1}} \, .
$$
Therefore, if $ \lim_{r \to + \infty} \left\| u_0 \right\|_{1, r}  = 0 $ we obtain an infinite lifetime, and it is not difficult to check that this is always the case for initial data with $ \left\| u_0 \right\|_{0,\rho} < +\infty $. Such a behavior is not surprising, as the norm $\left\| u_0 \right\|_{1, r} $ is meant to capture \emph{large data} with a growth rate of order $ |x|^{(2-\gamma)/(m-1)} $ as $ |x| \to +\infty $, whereas  $\left\| u_0 \right\|_{0, \rho} $ accounts for data having (in particular) a \emph{bounded average}. In fact, we will prove that the finiteness of $ \left\| u_0 \right\|_{0,\rho}  $ yields \emph{global boundedness} of constructed solutions for positive times (see Proposition \ref{p-transl}).

From now on, we will simply call a \emph{solution} of \eqref{wpme} the one constructed in \cite[Theorem 2.2]{MP} which, as explained above and established in Appendix \ref{technical}, turns out to also meet the requirements of Definition \ref{wpme sol}. Furthermore, \emph{all} the constructed solutions considered in \cite{MP} satisfy the pointwise bound
$$
 \left|  u(x,t) \right|  \le C_\varepsilon \left( 1 +|x| \right)^{\frac{2}{m-1}} \qquad \text{for a.e. } x \in \mathbb{R}^N \, , \ \forall t \in (\varepsilon,T-\varepsilon) \, ,
$$
for every $ \varepsilon \in (0,T/2) $. According to \cite[Theorem 2.3]{MP}, this additional bound gives rise to a \emph{uniqueness class}, so no confusion occurs.

In order to study asymptotics, we now introduce \emph{rescaled solutions} as in \cite{AR, KU}. That is, given any sequence $ \xi_k \to +\infty $ and any solution $u$ of \eqref{wpme}, we set
	\begin{equation}\label{rescaled sol}
		u_k(x,t) := \xi^\alpha_k \, u\!\left(\xi_k x , \xi_k^{\frac{\alpha}{\lambda_\alpha}} \, t\right) .
	\end{equation}
	Note that this is precisely the scaling \eqref{scale invar} under which the self-similar solution $ \mathcal{U}_\alpha $ is invariant, and it is straightforward to verify that $ u_k $ is a solution of
	\begin{equation}\label{wpme-resc}
	\begin{cases}
	\rho_k \, \partial_t u_k = \Delta \!\left(u_k^m\right) & \text{in } \mathbb{R}^N \times (0 , + \infty)\, , \\
	u_k = u_{0k} & \text{on } \mathbb{R}^N\times \{0\} \, ,
	\end{cases}
	\end{equation}
	namely problem \eqref{wpme} with respect to the rescaled weight \eqref{rescaled-weight} and \eqref{rescaled-datum} as its initial datum. It is readily seen that $ u_k $ falls in the same uniqueness class as $u$, so it coincides with the constructed solution taking $u_{0k}$ as its initial datum. For instance, under the asymptotic assumptions \eqref{weight-cond3} and~\eqref{datum-cond3}, we have that \eqref{wpme-resc} ``converges'' as $ k \to \infty $ to the limit problem \eqref{singularWPME}, {i.e.}~the one solved by $ \mathcal{U}_\alpha $. This is the key preliminary observation to determine asymptotics as $ t \to +\infty $, which however requires several technical tools in order to be made rigorous.
	
	\subsection{Main results and paper organization}
	
We are now in position to state our claimed asymptotic results for solutions taking initial data that behave like a non-integrable power $ |x|^{-\alpha} $ at spatial infinity.
	
	\begin{thm}[Global weighted $ L^p $ convergence]\label{main thm}
		Let  $N\geq3$, $m>1$ and $\rho$ be a measurable function satisfying \eqref{weight-cond}--\eqref{weight-cond2} with respect to some $\gamma\in[0,2)$ and $\underline{C},\overline{C},c>0$. Let $u_0\in L^1_{\mathrm{loc}}\!\left(\mathbb{R}^N,\rho\right)$ satisfy \eqref{conv-resc-datum} and \eqref{second-req-u0} with respect to some $ \alpha \in (0,N-\gamma) $, $ b>0 $ and an admissible weight $ \Phi $. Then the solution $ u $ of \eqref{wpme} has the following convergence property:
		\begin{equation}\label{main thm eq}
\lim_{t \to + \infty} \left\| t^{\lambda_\alpha} u\!\left( t^{\frac{\lambda_\alpha}{\alpha}} x , t \right) - g_\alpha(|x|) \right\|_{L^p\left( \mathbb{R}^N , \Phi \left| x \right|^{-\gamma} \right)} = 0
		\end{equation}
 	for every $ p \in [1,\infty) $,	where $ \lambda_\alpha $ is defined in \eqref{lambdaa} and $ g_\alpha $ is the solution of the ODE problem \eqref{ode1}.
	\end{thm}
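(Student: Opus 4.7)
The plan is to reduce the time-asymptotic statement to a compactness-plus-uniqueness argument for the rescaled solutions $u_k$ defined in \eqref{rescaled sol}. Exploiting the self-similar invariance \eqref{scale invar} of $\mathcal{U}_\alpha$, setting $\xi=t^{\lambda_\alpha/\alpha}$ gives
\[
t^{\lambda_\alpha} u\!\left( t^{\lambda_\alpha/\alpha} x, t \right) = u_\xi(x,1),
\]
so that \eqref{main thm eq} along $t\to+\infty$ is equivalent to $u_\xi(\cdot,1) \to g_\alpha(|\cdot|)$ in $L^p\!\left(\mathbb{R}^N, \Phi\,|x|^{-\gamma}\right)$ as $\xi\to+\infty$. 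By a standard subsequence argument and the uniqueness assertion in Theorem \ref{selfsim-sol}, it is enough to prove that every sequence $\xi_k\to+\infty$ admits a subsequence along which convergence holds, since the limit is then forced to be $\mathcal{U}_\alpha(\cdot,1)$.

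First I would set up compactness of $\{u_k\}$. The rescaled solutions solve \eqref{wpme-resc} with weights $\rho_k$ satisfying the uniform bounds \eqref{weight-cond-scaled} and converging to $c\,|x|^{-\gamma}$ in $L^1_{\mathrm{loc}}$ by \eqref{weight-cond2}, while the rescaled data $u_{0k}$ are uniformly bounded in $\|\cdot\|_{0,\rho_k}$ thanks to \eqref{second-req-u0}. The smoothing effect of Proposition \ref{p-transl} then yields a uniform $L^\infty$ bound on $u_k(\cdot,t)$ for $t$ bounded away from $0$. Away from the origin the weights $\rho_k$ are locally bounded between two positive constants, hence standard local energy estimates and local H\"older regularity (of DiBenedetto type) for the porous medium equation apply uniformly in $k$, producing a subsequence $u_{k_j} \to u_\infty$ locally uniformly in $\left(\mathbb{R}^N\setminus\{0\}\right)\times(0,+\infty)$ and in $L^p_{\mathrm{loc}}$.

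Next I would identify the limit. Passing to the limit in the weak formulation of \eqref{wpme-resc}, using $\rho_k\to c\,|x|^{-\gamma}$ in $L^1_{\mathrm{loc}}$, the strong $L^p_{\mathrm{loc}}$ convergence of $u_k^m$ and the weak $L^2_{\mathrm{loc}}$ convergence of $\nabla u_k^m$, shows that $u_\infty$ satisfies the equation in \eqref{singularWPME}. The initial trace is identified via \eqref{conv-resc-datum} combined with continuity in time down to $t=0$ (provided by the global smoothing bound and the weak-energy estimate), yielding $u_\infty(\cdot,0)=b\,|x|^{-\alpha}$. By the uniqueness part of Theorem \ref{selfsim-sol}, $u_\infty\equiv\mathcal{U}_\alpha$, and hence the full sequence $u_k(\cdot,1)$ converges pointwise a.e. (and in $L^p_{\mathrm{loc}}$) to $g_\alpha(|\cdot|)$.

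Finally, the local convergence has to be upgraded to a global one in $L^p\!\left(\mathbb{R}^N,\Phi\,|x|^{-\gamma}\right)$. The two-sided estimate \eqref{u-alpha-2} shows that $|g_\alpha(|x|)|\le C\,|x|^{-\alpha}$, and by admissibility the function $|x|^{-\alpha p}\,\Phi\,|x|^{-\gamma}$ is integrable on $\{|x|\ge 1\}$ as a consequence of \eqref{A3}, while integrability near the origin follows from \eqref{A1} combined with the local $L^\infty$ bound. The main obstacle, and the step I expect to be the most delicate, is to dominate $|u_k(\cdot,1)|$ pointwise by a multiple of $|x|^{-\alpha}$ at infinity, \emph{uniformly in} $k$. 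The $L^\infty$ smoothing effect alone is too crude at spatial infinity, so one needs to track the tail behavior through the equation, for instance by comparing $u_k$ from above and from below with suitably shifted self-similar profiles of the form $\mathcal{U}_\alpha$ adjusted to the uniform constants in \eqref{second-req-u0}. Once such a tail control is in place, an application of Lebesgue's dominated convergence theorem promotes the pointwise a.e. convergence to convergence in $L^p\!\left(\mathbb{R}^N,\Phi\,|x|^{-\gamma}\right)$ for every $p\in[1,\infty)$, finishing the proof.
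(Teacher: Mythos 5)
Your reduction to fixed-time convergence of the rescaled solutions $u_k(\cdot,1)$ is indeed how the paper concludes, but the core of your argument is a compactness-plus-uniqueness scheme that the paper deliberately avoids for this theorem, and as written it has genuine gaps. First, the compactness step: under the hypotheses of Theorem \ref{main thm} the weight $\rho$ is merely measurable and satisfies \eqref{weight-cond}--\eqref{weight-cond2}; no differentiability is assumed. Uniform-in-$k$ local H\"older estimates of DiBenedetto type for \eqref{wpme-resc} require rewriting the equation as $\partial_t u_k=\operatorname{div}\!\left(\rho_k^{-1}\nabla (u_k^m)\right)-\nabla\!\left(\rho_k^{-1}\right)\cdot\nabla\!\left(u_k^m\right)$ and a uniform bound on $\nabla(\rho_k^{-1})$, which is exactly why the paper introduces the extra hypothesis \eqref{rho-extra} and uses that machinery only in the uniform-convergence Theorem \ref{thm-unif-conv}, never in the proof of Theorem \ref{main thm}. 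Second, your identification of the limit invokes the uniqueness in Theorem \ref{selfsim-sol}, which requires the limit to attain the datum $b\,|x|^{-\alpha}$ in the sense of Definition \ref{wpme sol}; your justification (``continuity in time down to $t=0$ provided by the global smoothing bound and the weak-energy estimate'') does not work as stated, since the smoothing bound blows up like $t^{-\lambda}$ at $t=0$ and the energy estimate of Lemma \ref{constr-weak} is only away from $t=0$; a uniform-in-$k$ control of the trace is precisely the delicate point, and the paper obtains it indirectly through stability estimates valid down to $t=0$ (Propositions \ref{cauchy-adm} and \ref{cauchy-adm-0norm}) rather than by passing to the limit in the equation for $u_k$.

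The step that actually fails is your tail argument. A pointwise majorant $|u_k(\cdot,1)|\le C\,|x|^{-\alpha}$ uniform in $k$ cannot be produced by barriers or shifted self-similar supersolutions, because comparison would have to hold at $t=0$, while $u_{0k}$ is controlled only in the integral senses \eqref{conv-resc-datum} and \eqref{second-req-u0}: the data may change sign and may contain arbitrarily tall spikes (see the example with characteristic functions of balls in the remark following \eqref{second-req-u0-re}), so no pointwise ordering with a power-type barrier is available, and Aronson--Caffarelli-type tools are explicitly unavailable in this weighted, sign-changing setting. Such a majorant is also unnecessary: since $\|u_k(1)\|_{L^\infty}$ is uniformly bounded by Proposition \ref{p-transl}, the $L^p$ tails are controlled by the weighted $L^1\!\left(\Phi\rho_k\right)$ tails, and this is exactly what the paper controls. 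Its proof decomposes $u_k-\mathcal{U}_\alpha$ via the auxiliary solutions $u_{k,n}$ and $v_n$ started from the truncated profile $\eta_n=\left[\left(b|x|^{-\alpha}\right)\wedge n\right]\chi_{B_n}$, bounds $\|u_k(1)-u_{k,n}(1)\|_{L^1(\Phi\rho_k)}$ and $\|v_n(1)-\mathcal{U}_\alpha(1)\|_{L^1(\Phi|x|^{-\gamma})}$ through the Gr\"onwall-type stability estimate of Proposition \ref{cauchy-adm}, and sends $u_{k,n}\to v_n$ by the weight-stability Lemma \ref{h1 compact}; no compactness, H\"older regularity, or pointwise tail bound is needed. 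To repair your proposal you would essentially have to reprove these stability estimates, at which point the paper's direct route is both simpler and works under the stated hypotheses.
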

	
\begin{cor}\label{main thm cor}
	Let the assumptions of Theorem \ref{main thm} hold, and let the same notations be adopted. Then
			\begin{equation}\label{main thm eq_bis}
	\lim_{t \to + \infty} t^{\lambda_\alpha \left( p - \frac{N-\gamma}{\alpha} \right) }  \int_{\mathbb{R}^N} \left| u(y,t) - \mathcal{U}_\alpha(y,t) \right|^p \, \Phi\!\left( \frac{y}{t^{\frac{\lambda_\alpha}{\alpha}}} \right)  \left| y \right|^{-\gamma} dy  = 0
	\end{equation}
	for every $ p \in [1,\infty) $, where $ \mathcal{U}_\alpha $ is the self-similar solution of \eqref{singularWPME} defined in \eqref{Barenblatt}.
	\end{cor}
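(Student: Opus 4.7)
The corollary is essentially a change-of-variables restatement of Theorem \ref{main thm}, so my plan is to derive it directly from \eqref{main thm eq} by unraveling the self-similar structure of $\mathcal{U}_\alpha$ and performing a dilation in the spatial variable.

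More concretely, the first step is to raise the $L^p$-norm in \eqref{main thm eq} to the $p$-th power, giving
\begin{equation*}
\int_{\mathbb{R}^N} \left| t^{\lambda_\alpha} u\!\left( t^{\frac{\lambda_\alpha}{\alpha}} x , t \right) - g_\alpha(|x|) \right|^p \Phi(x) \, |x|^{-\gamma} \, dx \longrightarrow 0 \quad \text{as } t \to +\infty \, .
\end{equation*}
Then I would use the self-similar identity \eqref{Barenblatt}, which yields $g_\alpha(|x|) = t^{\lambda_\alpha}\, \mathcal{U}_\alpha\!\left(t^{\lambda_\alpha/\alpha} x , t\right)$, so that the factor $t^{\lambda_\alpha}$ can be pulled out of the absolute value and the integrand is rewritten as $t^{p\lambda_\alpha}\left| u\!\left( t^{\lambda_\alpha/\alpha} x, t\right) - \mathcal{U}_\alpha\!\left(t^{\lambda_\alpha/\alpha} x, t\right) \right|^p \Phi(x)\,|x|^{-\gamma}$.

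The next step is the substitution $y = t^{\lambda_\alpha/\alpha}\, x$, for which $dx = t^{-N\lambda_\alpha/\alpha}\,dy$ and $|x|^{-\gamma} = t^{\gamma \lambda_\alpha/\alpha}\,|y|^{-\gamma}$, while $\Phi(x) = \Phi(t^{-\lambda_\alpha/\alpha} y)$. Collecting the powers of $t$ produces exactly the prefactor $t^{p\lambda_\alpha + (\gamma - N)\lambda_\alpha/\alpha} = t^{\lambda_\alpha\left(p - (N-\gamma)/\alpha\right)}$ and the integrand displayed in \eqref{main thm eq_bis}. Since the original quantity tends to zero, so does the transformed one, which is precisely the claim.

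There is no real obstacle here beyond careful bookkeeping of exponents; the admissibility of $\Phi$ (in particular $\Phi \in L^\infty$, which guarantees $\Phi(t^{-\lambda_\alpha/\alpha}\,\cdot)$ makes sense pointwise) is automatic from \eqref{A1}, and no limiting argument on $\Phi$ is needed because the identity between the two integrals is exact for every $t>0$. Thus the corollary follows at once from Theorem \ref{main thm}.
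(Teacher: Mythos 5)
Your proposal is correct and coincides with the paper's own argument: the authors likewise obtain \eqref{main thm eq_bis} from \eqref{main thm eq} via the change of variables $y = t^{\lambda_\alpha/\alpha}x$, invoking the self-similar structure of $\mathcal{U}_\alpha$ (your use of \eqref{Barenblatt} to write $g_\alpha(|x|)=t^{\lambda_\alpha}\,\mathcal{U}_\alpha\!\left(t^{\lambda_\alpha/\alpha}x,t\right)$ is equivalent to their appeal to \eqref{scale invar}). The exponent bookkeeping in your computation is accurate, so nothing further is needed.
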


\begin{oss}\rm \label{gimpl}
	It is plain that, if \eqref{main thm eq_bis} holds for an admissible weight $ \Phi $, then it holds for any other weight $ \Psi \le \Phi $, even if the latter is not admissible. In particular, since $  \Phi $ is locally positive and bounded, one can take  $ \Psi \equiv \chi_{B_1} $, which entails
	\begin{equation}\label{main thm eee}
		\lim_{t \to + \infty} t^{\lambda_\alpha \left( p - \frac{N-\gamma}{\alpha} \right) }  \int_{ B_{  t^{ {\lambda_\alpha}/{\alpha} } }  } \left| u(y,t) - \mathcal{U}_\alpha(y,t) \right|^p  \left| y \right|^{-\gamma} dy  = 0 \, .
	\end{equation}
	A straightforward computation yields
	$$
	\int_{ B_{ t^{ {\lambda_\alpha}/{\alpha} }} } \left| \, \mathcal{U}_\alpha(y,t) \right|^p  \left| y \right|^{-\gamma} dy  = t^{-\lambda_\alpha \left( p - \frac{N-\gamma}{\alpha} \right) }
	$$
	up to constants, so \eqref{main thm eee} implies that $ u(t)-\mathcal{U}_\alpha(t) $ is of smaller order with respect to $ \mathcal{U}_\alpha(t) $, if measured in $L^p$ on suitable growing balls as $ t \to +\infty $.
	\end{oss}

Now we state the main result on uniform convergence, whose proof, carried out in Section \ref{unique}, is inspired by \cite[Section 5]{RV2}. Such a proof requires, in particular, a smallness criterion on the tail of solutions that is not guaranteed by our \emph{a priori} estimates in Section \ref{exist}, which calls for a careful adaptation of the $ L^\infty $-smallness implication already established for globally integrable solutions (for more details see the end of the proof of Theorem \ref{thm-unif-conv}). To this aim, we find \eqref{second-req-u0-re} to be sufficient, a mild convergence assumption replacing \eqref{conv-resc-datum} and \eqref{second-req-u0} (but in general neither stronger nor weaker),  which still allows $u_0$ to be \emph{sign-changing} and unbounded. We also require a natural additional regularity assumption \eqref{rho-extra} on the weight.
	
\begin{thm}[Global uniform convergence]\label{thm-unif-conv}
Let  $N\geq3$, $m>1$ and $\rho$ be a measurable function satisfying \eqref{weight-cond}--\eqref{weight-cond2} with respect to some $\gamma\in[0,2)$ and $\underline{C},\overline{C},c>0$. Let $u_0\in L^1_{\mathrm{loc}}\!\left(\mathbb{R}^N,\rho\right)$ satisfy \eqref{second-req-u0-re}
with respect to some $ \alpha \in (0,N-\gamma) $ and $ b>0 $. Suppose moreover that
	\begin{equation}\label{rho-extra}
	\rho \in C^{1}\!\left( \mathbb{R}^N  \setminus \{ 0 \} \right) , \qquad \left| \nabla \rho (x) \right| \le \widehat{C} \left| x \right|^{-\gamma-1} \quad \forall x \in \mathbb{R}^N \setminus \{ 0 \} \, ,
	\end{equation}
	for some constant $ \widehat{C}>0 $. Then
			\begin{equation}\label{main thm unif}
\lim_{t \to + \infty} \left\| t^{\lambda_\alpha} u\!\left( t^{\frac{\lambda_\alpha}{\alpha}} x , t \right) - g_\alpha(|x|) \right\|_{L^\infty\left( \mathbb{R}^N \right)} = 0 \,,
		\end{equation}
		or equivalently
					\begin{equation}\label{main thm unif t}
\lim_{t \to + \infty} t^{\lambda_\alpha} \left\| u(t) - \mathcal{U}_\alpha(t) \right\|_{L^\infty\left( \mathbb{R}^N  \right)} = 0 \, .
		\end{equation}
	\end{thm}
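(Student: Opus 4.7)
By the self-similar invariance \eqref{scale invar}, the statements \eqref{main thm unif} and \eqref{main thm unif t} are equivalent, and via the rescaling \eqref{rescaled sol} evaluated at $t=1$ the problem reduces to showing that, for every sequence $\xi_k\to+\infty$, the rescaled solutions $u_k$ of \eqref{wpme-resc} satisfy $\|u_k(\cdot,1)-g_\alpha(|\cdot|)\|_{L^\infty(\R^N)}\to0$. Following the spirit of \cite[Section~5]{RV2}, the plan is to split the argument into a local uniform convergence step on any fixed ball $\overline{B_R}$, and a uniform-in-$k$ tail-smallness step on $\R^N\setminus B_R$; the latter is the main obstacle.

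For the local step, hypothesis \eqref{second-req-u0-re} gives in particular $\sup_k\|u_{0k}\|_{0,\rho_k}<+\infty$, so the smoothing effect of Proposition \ref{p-transl} applied to \eqref{wpme-resc} yields $\sup_k\|u_k(t)\|_{L^\infty(\R^N)}<+\infty$ for every $t>0$. The extra regularity assumption \eqref{rho-extra} transfers after rescaling to uniform $C^1$-control of the $\rho_k$ on compact subsets of $\R^N\setminus\{0\}$; together with the uniform $L^\infty$-bound this places us in a regime where classical local H\"older estimates for quasilinear degenerate parabolic equations produce uniform equicontinuity of $\{u_k\}$ on compact subsets of $(\R^N\setminus\{0\})\times(0,+\infty)$. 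Combining this with the weighted $L^p$-convergence of Theorem \ref{main thm} (or a direct $L^p$-consequence of \eqref{second-req-u0-re}, easily obtained by covering $\R^N$ with the balls appearing in \eqref{norm-trasl} and using the decay of an admissible weight $\Phi$) and invoking Arzel\`a--Ascoli yields $u_k(\cdot,1)\to g_\alpha(|\cdot|)$ uniformly on every compact of $\R^N\setminus\{0\}$. Equicontinuity at the origin, where $g_\alpha$ is continuous and the weight is highly singular (so the diffusion is strong there), completes the local uniform convergence on $\overline{B_R}$.

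For the tail step, the strength of \eqref{second-req-u0-re} is essential. Since $g_\alpha(r)=b\,r^{-\alpha}(1+o(1))$ as $r\to+\infty$ by \eqref{ode1}, it is enough to show that, given $\varepsilon>0$, there exist $R$ and $k_0$ with $|u_k(x,1)-b|x|^{-\alpha}|<\varepsilon\,|x|^{-\alpha}$ for all $|x|\ge R$ and $k\ge k_0$. To produce such an estimate I would combine a localized version of Proposition \ref{p-transl} -- bounding the $L^\infty$-size of $u_k(1)$ on outer regions in terms of $\|\cdot\|_{0,\rho_k}$-norms of $u_{0k}$ on suitably dilated outer regions -- with a comparison argument against sub-/supersolutions of Barenblatt type built from the two-sided bound \eqref{u-alpha-2}. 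Because \eqref{second-req-u0-re} exactly quantifies the smallness of $\|u_{0k}-b|x|^{-\alpha}\|_{0,\rho_k}$ uniformly on dyadic annuli at infinity, the desired outer-bound for $u_k(\cdot,1)-b|\cdot|^{-\alpha}$ follows. Patching the two estimates on $\overline{B_R}$ and $\R^N\setminus B_R$ gives \eqref{main thm unif}.

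The main obstacle is precisely the tail step: standard mass-conservation arguments used in \cite{RV2} for globally integrable data are unavailable here, because $u_k\notin L^1(\rho_k)$ and the weights $\rho_k$ themselves vary with $k$. The genuinely new difficulty is producing a \emph{uniform-in-$k$} pointwise outer bound for $u_k(\cdot,1)-b|\cdot|^{-\alpha}$ from the sole information that $\|u_{0k}-b|x|^{-\alpha}\|_{0,\rho_k}\to0$, an information that does not even force $u_k$ to have a definite sign. I expect this to require both the particular geometry encoded in the norm \eqref{norm-trasl} -- which matches the natural scale $R^{\gamma/2}$ at which $\Delta$ of a cutoff balances $\rho$ -- and the gradient bound in \eqref{rho-extra}, needed to control error terms in the construction of barriers near the spatial singularity of $\rho$.
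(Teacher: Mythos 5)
Your overall architecture (rescaling to $u_k$ at $t=1$, an inner/outer splitting, uniform H\"older estimates away from the origin via \eqref{rho-extra}, a localized smoothing estimate for the tail) is the same as the paper's, but three steps are genuinely gapped. First, you identify the local uniform limit by invoking Theorem \ref{main thm}, whose hypotheses \eqref{conv-resc-datum} and \eqref{second-req-u0} are \emph{not} assumed in Theorem \ref{thm-unif-conv}: the paper points out that \eqref{second-req-u0-re} is in general neither stronger nor weaker than that pair, so the weighted $L^p$ convergence is not available off the shelf. The paper must prove a substitute (its Claim 1: $\| u_k(t_0)-\mathcal{U}_\alpha(t_0)\|_{0,|x|^{-\gamma}}\to 0$), and this requires the stability estimate in the $\|\cdot\|_{0,\rho}$ norm (Proposition \ref{cauchy-adm-0norm}), the truncations $\eta_n$ of $b|x|^{-\alpha}$, and the change-of-weight convergence Lemma \ref{h1 compact}; your parenthetical ``direct $L^p$-consequence of \eqref{second-req-u0-re}'' concerns only the initial data and does not propagate to time $t=1$ without such a stability estimate. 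Second, uniform convergence near $x=0$ is asserted (``the diffusion is strong there''), but this is precisely where the DiBenedetto-type estimates you rely on break down: the coefficient $\rho_k^{-1}$ is only controlled uniformly on compact sets away from the origin, and no a priori H\"older continuity down to the origin is known in this weighted setting. The paper devotes two separate claims to this point, building upper and lower barriers from the auxiliary Cauchy--Dirichlet problem \eqref{lowbar-lemma} (Lemma \ref{lowbar-lemma-lemma}) and running local comparison on small cylinders $\overline{B}_\varepsilon\times[\tau,t_0]$; this step cannot be waved away.

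Third, the tail. Your plan to compare $u_k$ with Barenblatt-type sub-/supersolutions built from \eqref{u-alpha-2} needs a pointwise ordering on a parabolic boundary at the initial time, which is not implied by the only available hypothesis, smallness of $\| u_{0k}-b|x|^{-\alpha}\|_{0,\rho_k}$: this is an averaged, sign-insensitive quantity and $u_{0k}$ may change sign, so no such ordering holds. Also, the pointwise bound $|u_k(x,1)-b|x|^{-\alpha}|<\varepsilon|x|^{-\alpha}$ you aim for is stronger than what is needed or proved. The part of your suggestion that does work is the localized smoothing: the paper reuses the Moser-iteration output \eqref{M1-ter} on the balls $B_{R^{\gamma/2}}(z_R)$, applied at time $t_0$ with $\Lambda=\tfrac{1}{(m-1)t_0}$ coming from the B\'enilan--Crandall inequality, and then inserts the triangle inequality, the decay \eqref{u-alpha-2} of $\mathcal{U}_\alpha$, and the Claim 1 convergence in $\|\cdot\|_{0,|x|^{-\gamma}}$ of the \emph{solutions at time $t_0$} (not of the data) to obtain $\| u_k(t_0)\|_{L^\infty(B_{R_\delta}^c)}<\delta$ for large $k$; since $\mathcal{U}_\alpha(t_0)$ itself is uniformly small at infinity, this closes the argument without any barrier at infinity. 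So the tail can be handled, but not along the comparison route you sketch, and only after the $\|\cdot\|_{0,|x|^{-\gamma}}$ convergence at positive times has been established.
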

	
There is a quite natural class of initial data that meet assumptions \eqref{conv-resc-datum}, \eqref{second-req-u0} and \eqref{second-req-u0-re}, to which the above asymptotic results thus apply.
	
\begin{pro}\label{datum}
Let  $N\geq3$ and $\rho$ be a measurable function satisfying \eqref{weight-cond} with respect to some $\gamma\in[0,2)$ and $\underline{C},\overline{C} > 0 $. Let $ u_0 \in L^1_{\mathrm{loc}}\!\left( \mathbb{R}^N , \rho \right) $ fulfill
\begin{equation}\label{EE1}
	\underset{|x| \to +\infty}{\operatorname{ess}\lim} \, |x|^{\alpha} \, u_0(x) = b \, ,
\end{equation}
for some $ \alpha \in (0, N-\gamma) $ and $ b>0 $. Then the conditions \eqref{conv-resc-datum} (for every admissible weight $ \Phi $), \eqref{second-req-u0} and \eqref{second-req-u0-re} all hold.
\end{pro}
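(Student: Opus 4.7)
The starting point is the equivalent $\varepsilon$-envelope formulation of \eqref{EE1}: for every $\varepsilon>0$ there exists $R_\varepsilon\ge 1$ such that
\[
\bigl|u_0(y) - b\,|y|^{-\alpha}\bigr| \le \varepsilon\,|y|^{-\alpha} \qquad \text{for a.e.\ } |y|\ge R_\varepsilon.
\]
All three claims will then follow by coupling this far-field envelope with the local integrability $u_0\in L^1_{\mathrm{loc}}(\mathbb{R}^N,\rho)$ through a splitting at $|y|=R_\varepsilon$, followed by letting the scaling parameter diverge and then $\varepsilon\to 0$.

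For \eqref{conv-resc-datum} I would use the equivalent formulation \eqref{conv-resc-datum-bis} and split the integral at $|y|=R_\varepsilon$. On $\{|y|\le R_\varepsilon\}$,
\[
\int_{|y|\le R_\varepsilon}\bigl|u_0(y) - b\,|y|^{-\alpha}\bigr|\,\Phi(y/\xi)\,\rho(y)\,dy \le \|\Phi\|_\infty \int_{|y|\le R_\varepsilon}\bigl(|u_0(y)|+b\,|y|^{-\alpha}\bigr)\rho(y)\,dy =: C_\varepsilon,
\]
which is finite using $u_0\in L^1_{\mathrm{loc}}(\mathbb{R}^N,\rho)$, $\rho\le \overline{C}|y|^{-\gamma}$, and $\alpha+\gamma<N$; after multiplication by the prefactor $\xi^{\alpha+\gamma-N}\to 0$ this contribution vanishes. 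On $\{|y|>R_\varepsilon\}$, the envelope, the bound $\rho\le\overline{C}|y|^{-\gamma}$, and the change of variables $x=y/\xi$ give
\[
\xi^{\alpha+\gamma-N}\int_{|y|>R_\varepsilon}\varepsilon\,|y|^{-\alpha}\,\Phi(y/\xi)\,\rho(y)\,dy \le \varepsilon\,\overline{C}\int_{\mathbb{R}^N}|x|^{-\alpha-\gamma}\,\Phi(x)\,dx,
\]
and the last integral is finite by \eqref{A3}. Sending $\xi\to+\infty$ first and then $\varepsilon\to 0$ yields \eqref{conv-resc-datum}.

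For \eqref{second-req-u0-re}, I would split the supremum defining \eqref{norm-trasl} for $\rho_k$ according to whether $R$ is large or bounded. Set $R_*:=2^{2/(2-\gamma)}$. If $R\ge R_*$, every $x\in B_{R^{\gamma/2}}(z_R)$ satisfies $|x|\ge R/2$; hence, for $k$ so large that $\xi_k R_*/2\ge R_\varepsilon$, every $y=\xi_k x$ meets $|y|\ge R_\varepsilon$, the rescaled envelope $\bigl|u_{0k}(x)-b\,|x|^{-\alpha}\bigr|\le\varepsilon\,|x|^{-\alpha}$ holds on the whole ball, and \eqref{weight-cond-scaled} together with a direct computation yield
\[
R^{-\tfrac{\gamma(N-2)}{2}}\int_{B_{R^{\gamma/2}}(z_R)}\varepsilon\,|x|^{-\alpha}\,\rho_k(x)\,dx \le C\,\varepsilon\,R^{-\alpha}\le C\,\varepsilon.
\]
If instead $1\le R\le R_*$, passing to the original variables the region of integration is contained in $B_{2\xi_k R_*}(0)$; the same splitting at $|y|=R_\varepsilon$ as in the previous paragraph bounds the original-variable integral by $C_\varepsilon + C\,\varepsilon\,\xi_k^{N-\alpha-\gamma}$, and multiplying by $\xi_k^{\alpha+\gamma-N}R^{-\gamma(N-2)/2}$ (with $R$ varying in a bounded range) produces $o(1)+C\,\varepsilon$. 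Combining the two regimes and letting $k\to\infty$ gives $\limsup_k\|u_{0k}-b\,|x|^{-\alpha}\|_{0,\rho_k}\le C\,\varepsilon$, hence \eqref{second-req-u0-re} upon $\varepsilon\to 0$. Condition \eqref{second-req-u0} then follows at once from the triangle inequality, since $\||x|^{-\alpha}\|_{0,\rho_k}$ is uniformly bounded in $k$: by \eqref{weight-cond-scaled} this reduces to finiteness of $\sup_{R\ge 1,\,z_R\in\partial B_R}R^{-\gamma(N-2)/2}\int_{B_{R^{\gamma/2}}(z_R)}|x|^{-\alpha-\gamma}dx$, which holds thanks to the $R^{-\alpha}$ decay for $R$ large and absolute convergence at $R=1$ (again via $\alpha+\gamma<N$).

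The main obstacle is precisely the regime where $R$ is close to $1$ in the norm $\|\cdot\|_{0,\rho_k}$: there the ball $B_{R^{\gamma/2}}(z_R)$ can reach the origin, where the far-field envelope is useless. Two ingredients rescue us: the decaying prefactor $\xi_k^{\alpha+\gamma-N}$ (available because $\alpha<N-\gamma$) absorbs the nonscaling constant $C_\varepsilon$ coming from $\{|y|\le R_\varepsilon\}$, while in the bounded $R$-range the rescaled ball sits inside $B_{2\xi_k R_*}(0)$ so that the contribution from $\{|y|>R_\varepsilon\}$ scales precisely as $\xi_k^{N-\alpha-\gamma}$, cancelling the prefactor up to an $O(\varepsilon)$ term.
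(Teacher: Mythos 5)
Your proposal is correct and follows essentially the same route as the paper's own proof: the $\varepsilon$-envelope form of \eqref{EE1}, a split of each integral at $|y|=R_\varepsilon$, with the near-origin contribution absorbed by the prefactor $\xi^{\alpha+\gamma-N}$ (using $\alpha<N-\gamma$ and $u_0\in L^1_{\mathrm{loc}}(\mathbb{R}^N,\rho)$) and the far contribution controlled by $\varepsilon$ times a convergent power integral, plus the observation that balls $B_{R^{\gamma/2}}(z_R)$ with $R$ large stay away from the origin so the envelope applies there, and that \eqref{second-req-u0} follows from \eqref{second-req-u0-re} by the triangle inequality since $\left\| |x|^{-\alpha}\right\|_{0,\rho_k}$ is uniformly bounded. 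The only immaterial difference is bookkeeping: the paper replaces $\|\cdot\|_{0,\rho_k}$ by an equivalent two-term quantity (a fixed central ball $B_{R_\gamma}$ plus the supremum over $R\ge R_\gamma$), whereas you split the supremum directly at $R=R_*$ and enclose the bounded-$R$ balls in $B_{2\xi_k R_*}$; the resulting estimates coincide.
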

	
The rest of the paper is organized as follows. In Section \ref{exist} we establish \emph{a priori} $ L^\infty $ and $ L^1 $ estimates, which are of key importance because they turn out to be uniform with respect to the scalings introduced above. In Section \ref{unique} we first prove Theorem~\ref{main thm} (along with Corollary~\ref{main thm cor}), by taking advantage of such estimates and further approximation arguments. {Then we prove our uniform convergence result Theorem \ref{thm-unif-conv}.} Section \ref{ODE} is devoted to a thorough study of the well-posedness of the ODE problem \eqref{ode1} {and the proof of Theorem \ref{selfsim-sol}. Finally, in Appendix \ref{technical} we prove a simple energy estimate for constructed solutions, and we bring together some postponed proofs of a few technical lemmas, including Proposition \ref{datum}.}
	
\begin{rem}[On low dimensions]\rm\label{N=2}
Similarly to \cite[Remark 4.4]{MP}, we observe that all of our main results also hold in dimensions $ N=2 $ and $N=1$, up to some technical modifications that we briefly explain. First of all, in the proof of Proposition \ref{p1} below one needs to replace the weighted Sobolev inequality with Caffarelli-Kohn-Nirenberg inequalities (i.e.~weighted Gagliardo-Nirenberg inequalities) in order to get suitable analogues of the crucial estimates \eqref{m21} and \eqref{m21-transl}. For more details on such an approach, we refer to \cite[pages 67--69]{BCP}. Note that such inequalities hold for the singular weight $ \rho(x) \equiv |x|^{-\gamma} $ or for a bounded weight complying with \eqref{weight-cond}, but in general \eqref{weight-cond} by itself is not enough. Moreover, in dimension $ N=1 $ it is necessary to require that $ \gamma<1 $, otherwise the weight $ |x|^{-\gamma} $ is not even locally integrable. The ODE analysis in Section \ref{ODE} requires a few technical adjustments for $ N=1 $, since the  ``artificial'' dimension $ \tilde{N} $ is actually less than $ 1 $ (contrarily to \eqref{ode-parameters}) but still positive and greater than $ \tilde \alpha $.

Finally, some comments on the norm introduced in \eqref{norm-trasl} are in order. In the case $ N=2 $ the multiplying factor $ R^{-\gamma(N-2)/2} $ disappears; nevertheless, the norm $ \| \cdot \|_{0,\rho} $ is still weaker than the usual $ L^1\!\left(\mathbb{R}^N , \rho \right) $ norm. On the other hand, in the case $ N=1 $ we see that this factor becomes a positive power, hence there is no longer an ordering between the two norms: one can easily construct functions in $ L^1\!\left(\mathbb{R}^N , \rho \right) $ with an infinite $ \| \cdot \|_{0,\rho} $ norm and vice versa (e.g.~constants).

\end{rem}

\section{A priori estimates} \label{exist}
From now on, we take for granted that $ N \ge 3 $, $ m>1 $ and $ \rho $ is a measurable function satisfying \eqref{weight-cond} for some $\gamma\in[0,2)$ and $\underline{C},\overline{C}>0$. When mentioning the ``solution'' of \eqref{wpme} we mean the one constructed in \cite{MP}, as explained in Subsection \ref{setup}. Here we do not deal explicitly with rescaled data and solutions as in \eqref{rescaled-datum} and \eqref{rescaled sol}, but the bounds we obtain will be crucially applied to them in Section \ref{unique}. 	
		
In order to lighten the reading, in most of the integrals below we will remove integration variables, at least when no ambiguity occurs. Moreover, except when strictly necessary, we will omit the use of ``almost everywhere''. Let us also recall that the special exponents $\lambda$ and $\theta$ are defined in \eqref{def-lambda-theta}.

	\subsection{Smoothing and stability estimates} \label{smooth}
	We begin by proving a global $L^\infty$ smoothing effect for solutions of \eqref{wpme} in terms of the norm $\norm{\cdot}_{0,\rho}$ introduced in \eqref{norm-trasl}. The proof relies on a preliminary elliptic estimate, which is a careful modification of the one presented in \cite[Proposition 3.2]{MP} for ``large'' norms.
	\begin{pro}\label{p1}
Let $u \in L^\infty\!\left(\mathbb{R}^N\right)$, with $ u\geq 0$. Suppose moreover that $ \nabla\!\left( u^m \right) \in L^2_{\mathrm{loc}}\!\left( \mathbb{R}^N \right) $, $ \Delta \!\left( u^m \right) \in L^1_{\mathrm{loc}}\!\left(\mathbb{R}^N\right) $ and
	\begin{equation*}\label{m10}
		\Delta \! \left( u^m \right) \geq - \Lambda \, \rho \,  u  \qquad \text{in } \mathbb{R}^N
	\end{equation*}
	for some constant $\Lambda> 0$. Then there exists a positive constant $C_0$, depending only on $N,m,\gamma,\underline{C},\overline{C}$, such that
	\begin{equation}\label{m11}
		\left\| u \right\|_{L^\infty \left( \mathbb{R}^N \right)} \leq C_0 \left( \Lambda^{\lambda} \left\| u \right\|_{0,\rho}^{\theta \lambda} + \left\| u \right\|_{0,\rho}\right).
	\end{equation}
	\end{pro}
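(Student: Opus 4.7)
The strategy is a localized Moser iteration modeled on \cite[Proposition 3.2]{MP}. The whole argument rests on the matching already pointed out in Subsection \ref{kse}: a standard cutoff $\psi$ with $\operatorname{supp}\psi \subset B_{R^{\gamma/2}}(z_R)$, equal to $1$ on a concentric half-ball, satisfies $|\nabla\psi|^2 + |\Delta\psi| \lesssim R^{-\gamma}$. Since $R^{\gamma/2} < R/2$ for every $R \geq R_0$ sufficiently large (recall $\gamma<2$), one has $|x| \asymp R$ on $\operatorname{supp}\psi$ and hence, by \eqref{weight-cond}, $\rho(x)\asymp R^{-\gamma}$ there. Consequently $|\nabla\psi|^2,|\Delta\psi| \lesssim \rho$ on $\operatorname{supp}\psi$, so the cutoff-induced errors produced by integration by parts are of the same size as the zero-order forcing $\Lambda\,\rho\, u$, and not of lower order as in the unweighted case.

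Multiplying the inequality $\Delta(u^m)+\Lambda\rho u\geq 0$ by a test function of the form $u^{s}\psi^{2}$ (with $s>0$) and integrating by parts, one obtains, after absorbing the mixed gradient/cutoff term via Young's inequality, a Caccioppoli-type estimate of the schematic form
\begin{equation*}
\int \left|\nabla\!\left(u^{(m+s)/2}\right)\right|^2 \psi^2 \,dx \lesssim (1+s)\int u^{m+s}\,\rho\,dx + \Lambda\,(1+s)\int u^{s+1}\,\rho\,\psi^2\,dx,
\end{equation*}
where both right-hand side integrals are taken over $\operatorname{supp}\psi$. Since $\rho\asymp|x|^{-\gamma}$ on that set, one can now invoke the weighted Sobolev (Caffarelli--Kohn--Nirenberg) inequality with exponent $2^\ast_\gamma = 2(N-\gamma)/(N-2)$, gaining a factor $2^\ast_\gamma/2 > 1$ of integrability at each step. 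Iterating on a nested sequence of concentric balls produces the localized smoothing bound
\begin{equation*}
\sup_{B_{R^{\gamma/2}/2}(z_R)} u \,\leq\, C\left(\Lambda^\lambda\, M^{\theta\lambda}\, + \, M\right),\qquad M:=R^{-\gamma(N-2)/2}\!\int_{B_{R^{\gamma/2}}(z_R)} u\,\rho\,dx \,\leq\, \|u\|_{0,\rho},
\end{equation*}
with $\lambda,\theta$ as in \eqref{def-lambda-theta}. The two terms on the right arise respectively from the forcing and the cutoff contributions, separated via Young's inequality at the final step of the iteration. The radius $R^{\gamma/2}$ and the normalization $R^{-\gamma(N-2)/2}$ built into \eqref{norm-trasl} are precisely calibrated so that all $R$-dependent prefactors generated along the iteration cancel out.

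Taking the supremum over $z_R \in \partial B_R$ and $R \geq R_0$ yields \eqref{m11} on $\{|x| \geq R_0\}$; the complementary bounded region (including the small-$R$ balls that may meet the origin) is handled by a direct local elliptic estimate on a ball away from the singularity of $\rho$, together with the a priori hypothesis $u\in L^\infty$, which guarantees the boundedness of the weighted integrals of $u$ near the origin and permits one to close the estimate there. The main technical obstacle is the careful bookkeeping of the exponents of $s$, $R$ and $\Lambda$ through the iteration, needed to recover the sharp powers $\lambda$ and $\theta\lambda$ and to check the exact cancellation of $R$-dependent constants; the secondary but delicate point is the treatment of the bounded region and of the potentially singular behavior of $\rho$ at $0$, which requires the covering argument to be set up with some care.
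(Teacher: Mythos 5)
Your overall strategy is the paper's: the same cutoffs supported in $B_{R^{\gamma/2}}(z_R)$ whose derivatives scale like $R^{-\gamma}\asymp\rho$, a Caccioppoli estimate, a (localized weighted) Sobolev inequality with exponent $\tfrac{2(N-\gamma)}{N-2}$, Moser iteration, and the observation that the normalization $R^{-\gamma(N-2)/2}$ in \eqref{norm-trasl} makes all $R$-dependent factors cancel. However, there is a genuine gap at the pivotal step. You assert that the iteration directly yields the clean per-ball bound $\sup_{B_{R^{\gamma/2}/2}(z_R)}u\le C\big(\Lambda^{\lambda}M^{\theta\lambda}+M\big)$ with $M$ the local weighted $L^1$ average. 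It does not: each Caccioppoli step carries the nonlinear factor $u^{m-1}$ on the annulus where the cutoff varies, and lowering the starting exponent $p_0\ge m+1$ of the iteration down to $L^1$ via $(\psi_R u)^{p_0}\le \|u\|_\infty^{p_0-1}u$ costs a further sup-norm factor; so what the iteration actually produces is an estimate of the form \eqref{M1-ter}, in which powers of $\left\|u\right\|_{L^\infty}$ (over the larger ball, or globally) with total exponent strictly less than one remain on the right-hand side. Removing them requires either a genuine local absorption argument on nested radii (tracking how the constants blow up as the radii approach each other, which you do not set up), or the paper's route: keep the sup-norm factors explicit, take the supremum over all $R\ge R_\gamma$ and $z_R\in\partial B_R$, combine with the inner region, and only then eliminate $\left\|u\right\|_{L^\infty(\mathbb{R}^N)}$ by the algebraic lemma \cite[Lemma 1.3]{BCP}. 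In particular the paper never proves (and does not need) the local estimate you claim; invoking ``Young's inequality at the final step'' does not by itself let you absorb a sup over the larger ball into a sup over the smaller one.

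The treatment of the bounded region is also not right as described. That region contains the origin, so an estimate ``on a ball away from the singularity of $\rho$'' cannot cover it, and appealing to the a priori hypothesis $u\in L^\infty$ to ``close the estimate there'' is circular if used quantitatively (the whole point is to bound $\|u\|_{L^\infty}$ by $\Lambda$ and $\|u\|_{0,\rho}$; the hypothesis may only be used qualitatively, as in the final absorption). The paper handles $B_{R_\gamma}$ by running the iteration of \cite[Proposition 3.2]{MP} on a fixed ball, using the weighted Sobolev inequality associated with $\rho$, which accommodates the singularity at the origin and again leaves sup-norm factors that are absorbed only at the very end. With these two repairs your argument coincides with the paper's proof; as written, the key absorption mechanism and the inner region are missing.
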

	\begin{proof}
First of all, we let $ R_\gamma > 1 $ be a suitable fixed number such that
\begin{equation}\label{est-R-gamma}
R-R^{\frac \gamma 2} \ge \frac{R}{2}  \qquad \forall R \ge R_\gamma \, .
\end{equation}
For instance, one can pick $ R_\gamma=2^{2/(2-\gamma)} $. By repeating the first part of the proof of \cite[Proposition 3.2]{MP} with $ R \equiv R_\gamma $ there, which crucially takes advantage of a weighted Sobolev inequality in $ \mathbb{R}^N $ associated with $ \rho $, we end up with the following inequality:
	\begin{equation}\label{m21}
		\left[ \int_{\mathbb{R}^N} \left( \phi_{R_\gamma} u  \right)^{s(q+1) + \beta } \rho \, dx \right]^{\frac 1 s} \leq  C \, q \left(\Lambda + \left\| u \right\|_{L^\infty\left(\mathbb{R}^N\right)}^{m-1} \right) \int_{\mathbb{R}^N} \left(\phi_{R_\gamma} u \right)^{q+1} \rho \, dx \, ,
	\end{equation}
	where
	\begin{equation*}\label{m21-bis}
		s:= \frac{N-\gamma}{N-2} > 1 \, , \qquad \beta :=  s(m-1)>0
	\end{equation*}
and $ q $ is any exponent larger than $ m $, the generic constant $C>0$ depending only on $N,m,\gamma,\underline{C},\overline{C}$. Here $ \phi_{R_\gamma} $ is a typical cutoff function that vanishes outside $ B_{2R_\gamma} $ and is identically equal to $1$ in $ B_{R_\gamma} $. Then we can set up an analogous iterative scheme of Moser type, by applying \eqref{m21} to the sequence of exponents $ q + 1 \equiv p_{j} $ defined via the recurrence relation
$$
p_{j+1} = s p_j + \beta \qquad \Longrightarrow \qquad p_j = \left( p_0 + \frac{\beta}{s-1} \right) s^j - \frac{\beta}{s-1} \, ,
$$
for a fixed $ p_0 \ge m+1 $. This readily entails
	\begin{equation}\label{m21-ter}
\alpha_{j+1} \le p_j^{\frac{s}{p_{j+1}}} \left[ C \left( \Lambda + \left\| u \right\|_{L^\infty\left(\mathbb{R}^N\right)}^{m-1} \right) \right]^{\frac{s}{p_{j+1}}} \alpha_j^{s\frac{p_j}{p_{j+1}}} \, ,
	\end{equation}
where
$$
\alpha_j :=  \left[ \int_{\mathbb{R}^N} \left( \phi_{R_\gamma} u  \right)^{p_j}  \rho \, dx \right]^{\frac {1}{p_j}}  .
$$
Upon iterating \eqref{m21-ter} $ j $ times, it is not difficult to infer the estimate
	\begin{equation}\label{m21-quater}
\alpha_{j+1}	\le s^{\frac{\sum_{i=1}^{j} \left(j-i+1 \right) s^i}{p_{j+1}}} \left[ C \left( p_0 + \frac{\beta}{s-1} \right) \left( \Lambda + \left\| u \right\|_{L^\infty\left(\mathbb{R}^N\right)}^{m-1} \right) \right]^{\frac{\sum_{i=1}^{j+1} s^i}{p_{j+1}}} \alpha_0^{p_0 \frac{s^{j+1}}{p_{j+1}}} \, ,
	\end{equation}
so by letting $ j \to \infty $ in \eqref{m21-quater} we deduce
	\begin{equation}\label{M1}
\left\|  \phi_{R_\gamma} u  \right\|_{L^\infty\left(\mathbb{R}^N\right)}
 \le  C \left( \Lambda + \left\| u \right\|_{L^\infty\left(\mathbb{R}^N\right)}^{m-1} \right)^{ \frac{N-\gamma}{(N-\gamma)(m-1)+p_0(2-\gamma)} } \left[ \int_{\mathbb{R}^N} \left( \phi_{R_\gamma} u  \right)^{p_0}  \rho \, dx\right]^{ \frac{2-\gamma}{(N-\gamma)(m-1)+p_0(2-\gamma)} } ,
	\end{equation}
	where $ C>0 $ is a constant as above that depends in addition on $p_0$. In particular, due to the support properties of $ \phi_{R_\gamma} $ and the trivial inequality
	\begin{equation}\label{M1-interp}
	\left( \phi_{R_\gamma} u  \right)^{p_0} \le \left\| u \right\|_{L^\infty\left( \mathbb{R}^N \right)}^{p_0-1} u \, \chi_{B_{2R_\gamma}} \,  ,
	\end{equation}
	from \eqref{M1} it follows that
	\begin{equation}\label{M2}
	\begin{aligned}
 \left\|  u  \right\|_{L^\infty\left( B_{R_\gamma} \right)}
 \le & \, C  \left( \Lambda + \left\| u \right\|_{L^\infty\left(\mathbb{R}^N\right)}^{m-1} \right)^{ \frac{N-\gamma}{(N-\gamma)(m-1)+p_0(2-\gamma)} }  \left[ \int_{B_{2R_\gamma} } u  \,  \rho \, dx\right]^{ \frac{2-\gamma}{(N-\gamma)(m-1)+p_0(2-\gamma)} }  \\
 & \times \left\| u \right\|_{L^\infty\left( \mathbb{R}^N \right)}^{\frac{(p_0-1)(2-\gamma)}{(N-\gamma)(m-1)+p_0(2-\gamma)}} .
\end{aligned}
	\end{equation}
	On the other hand, it is plain that $ \| u \|_{0,\rho} $ controls the weighted $ L^1 $ norm of $ u $ in $ B_{2R_\gamma} $, hence \eqref{M2} yields in turn
	\begin{equation}\label{M3}
		\begin{aligned}
\left\|  u  \right\|_{L^\infty\left( B_{R_\gamma} \right)} \le  C \left( \Lambda + \left\| u \right\|_{L^\infty\left(\mathbb{R}^N\right)}^{m-1} \right)^{ \frac{N-\gamma}{(N-\gamma)(m-1)+p_0(2-\gamma)} } \left\| u \right\|_{0 , \rho}^{ \frac{2-\gamma}{(N-\gamma)(m-1)+p_0(2-\gamma)} } \left\| u \right\|_{L^\infty\left( \mathbb{R}^N \right)}^{\frac{(p_0-1)(2-\gamma)}{(N-\gamma)(m-1)+p_0(2-\gamma)}}
 \end{aligned}
	\end{equation}	
	up to a different constant depending on the same quantities as above.
		
	Our goal is now to prove \eqref{M3} with $ B_{R_\gamma} $ replaced by $ B_{R_\gamma}^c $. To this end, given an arbitrary $ R \ge R_\gamma $ and any $ z_R \in \partial B_R $, we set
		\begin{equation}\label{m-def-psiR}
		\psi_R(x) := \phi\!\left( 2\left| \frac{x-z_R}{R^{\frac{\gamma}{2}}} \right| \right) \qquad \forall x \in \mathbb{R}^N \, ,
		\end{equation}
		where $ \phi $ is again a standard cutoff function satisfying
		\begin{equation*}\label{m52}
			0 \leq \phi \leq 1 \quad \text{in } [0, +\infty) \, , \qquad \phi \equiv 1 \quad \text{in } [0,1] \, , \qquad \phi \equiv 0 \quad \text{in } [2,+\infty) \, .
		\end{equation*}
		We require moreover that
		\begin{equation}\label{m17}
		 \phi(r)^{m-2} \left[ \left|\phi''(r)\right| + (N-1) 	 \, \frac{\left| \phi'(r)\right|}{r}  \right]  + \phi(r)^{m-3} \left|\phi'(r)\right|^2  \leq K_m \, \chi_{[1,2]}(r) \qquad \forall r > 0
		\end{equation}
	for some constant $ K_m>0 $, which can be achieved by choosing carefully $ \phi $ since $ m>1 $. As a consequence, we can infer from \eqref{m17} and the definition of $ \psi_R $ that
		\begin{equation}\label{m18-transl}
			\psi_R(x)^{m-1} \left|\Delta \phi_R(x) \right| + \psi_R(x)^{m-2} \left|\nabla \phi_R(x)\right|^2 \leq \frac{K_m}{R^\gamma} \, \psi_R(x) \, \chi_{ \left[ \frac 1 2 R^{\frac \gamma 2}  , \,  R^{\frac \gamma 2} \right]}(|x-z_R|) \qquad \forall x \in \mathbb{R}^N \, .
		\end{equation}
		By repeating again the same computations as in the first part of the proof of \cite[Proposition 3.2]{MP} with $ \phi_R \equiv \psi_R $ there, and taking advantage of \eqref{m18-transl}, we obtain the following estimate:
\begin{equation}\label{m19-transl}
			\begin{aligned}
			\int_{\mathbb{R}^N} \left|\nabla\left(\psi_R u \right)^{\frac{q+m}2}\right|^2 dx
				\leq  C \, q\left[\Lambda \int_{\mathbb{R}^N} \left(\psi_R u \right)^{q+1} \rho \, dx + \frac 1{R^\gamma} \int_{B_{R^{\gamma/2}}(z_R) \setminus B_{\frac 1 2 R^{\gamma/2}}(z_R)} \left(\psi_R u \right)^{q+1} u^{m-1} \, dx \right] ,
			\end{aligned}
		\end{equation}
		where the exponent $ q $ and the generic constant $C$ are exactly as before. Since $ R \ge R_\gamma $ we observe that
		$$
		\rho(x) \ge \frac{\underline{C}}{\left( 1+|x| \right)^\gamma} \ge \frac{\underline{C}}{\left( 1 + R+R^{\frac{\gamma}{2}} \right)^\gamma} \ge \frac{1}{R^\gamma} \, \frac{\underline{C}}{\left( R_\gamma^{-1}+\frac 3 2 \right)^{\gamma}} \qquad \forall x \in B_{R^{{\gamma}/{2}}}(z_R)  \, ,
		$$
so \eqref{m19-transl} yields
		\begin{equation}\label{m20-transl}
			\int_{\mathbb{R}^N} \left|\nabla\left(\psi_R u \right)^{\frac{q+m}2}\right|^2 dx
			\leq {C} \, q  \left( \Lambda + \left\| u \right\|^{m-1}_{L^\infty\left(\mathbb{R}^N\right)} \right) \int_{\mathbb{R}^N} \left(\psi_R u \right)^{q+1} \rho \, dx \, .
		\end{equation}
		Now, we exploit the fact that the support of $ \psi_R $ lies in $ B_{R^{\gamma/2}}(z_R) $, a region of (unweighted) volume proportional to $ R^{N\gamma/2}$ in which, thanks to \eqref{weight-cond} and \eqref{est-R-gamma}, the density $ \rho $ is equivalent to $ R^{-\gamma} $. In particular, we can use the following localized version of the standard Sobolev inequality:
		\begin{equation*}\label{e12-transl}
			\begin{aligned}
				\left( \int_{\mathbb{R}^N} \left| f \right|^{\frac{2(N-\gamma)}{N-2}} \rho \, dx \right)^{\frac{N-2}{N-\gamma}} \leq & \, {C} R^{-\gamma\frac{N-2}{N-\gamma}} \, \left( \int_{\mathbb{R}^N} \left| f \right|^{\frac{2(N-\gamma)}{N-2}} dx \right)^{\frac{N-2}{N-\gamma}} \\
				\leq & \, {C} R^{-\gamma\frac{N-2}{N-\gamma}} \, R^{\frac{N\gamma}{2}\left( \frac{N-2}{N-\gamma} - \frac{N-2}{N} \right)} \left( \int_{\mathbb{R}^N} \left| f \right|^{\frac{2N}{N-2}} dx \right)^{\frac{N-2}{N}}
				\\
				\leq & \, {C} R^{-\frac{\gamma(N-2)(2-\gamma)}{2(N-\gamma)}} \int_{\mathbb{R}^N} \left| \nabla f \right|^{2} dx \qquad \forall f\in C^1_c\!\left( B_{R^{\frac{\gamma}{2}}}(z_R) \right) ,
			\end{aligned}
		\end{equation*}
which applied to \eqref{m20-transl} with $ f \equiv (\psi_R u)^{(q+m)/2} $ entails
		\begin{equation}\label{m21-transl}
			\left(\int_{\mathbb{R}^N} \left( \psi_R u  \right)^{s(q+1) + \beta } \rho \, dx \right)^{\frac 1 s} \leq  {C} \, q \left[ \left(\Lambda + \| u \|_{L^\infty(\mathbb{R}^N)}^{m-1} \right) R^{-\frac{\gamma(N-2)(2-\gamma)}{2(N-\gamma)}} \right] \int_{\mathbb{R}^N} \left(\psi_R u \right)^{q+1} \rho \, dx \, .
\end{equation}
At this point, starting from \eqref{m21-transl} one can carry out exactly the same iterations as in \eqref{m21}--\eqref{M1}, which lead to
	\begin{equation*}\label{M1-bis}
	\begin{aligned}
 \left\|  \psi_{R} u  \right\|_{L^\infty\left(\mathbb{R}^N\right)}
 \le  \, C \left( \Lambda + \left\| u \right\|_{L^\infty\left(\mathbb{R}^N\right)}^{m-1} \right)^{ \frac{N-\gamma}{(N-\gamma)(m-1)+p_0(2-\gamma)} } \frac{\left[ \int_{\mathbb{R}^N} \left( \psi_{R} u  \right)^{p_0}  \rho \, dx\right]^{ \frac{2-\gamma}{(N-\gamma)(m-1)+p_0(2-\gamma)} } } { R^{\frac{\gamma(N-2)(2-\gamma)}{2[(N-\gamma)(m-1)+p_0(2-\gamma)]}} } \, ,
\end{aligned}
	\end{equation*}
whence, by exploiting an analogue of \eqref{M1-interp},
	\begin{equation}\label{M1-ter}
	\begin{aligned}
\left\|  u  \right\|_{L^\infty\left( B_{\frac 1 2 R^{\gamma / 2}}(z_R) \right)} \le & \, C \left( \Lambda + \left\| u \right\|_{L^\infty\left(\mathbb{R}^N\right)}^{m-1} \right)^{ \frac{N-\gamma}{(N-\gamma)(m-1)+p_0(2-\gamma)} }   \\
 & \times \left[ \frac{ \int_{ B_{R^{\gamma / 2}}(z_R) } u  \,  \rho \, dx}{{ R^{\frac{\gamma(N-2)}{2}} }  }  \right]^{ \frac{2-\gamma}{(N-\gamma)(m-1)+p_0(2-\gamma)}} \left\| u \right\|_{L^\infty\left( \mathbb{R}^N \right)}^{\frac{(p_0-1)(2-\gamma)}{(N-\gamma)(m-1)+p_0(2-\gamma)}} .
\end{aligned}
	\end{equation}
By taking the supremum of both sides of \eqref{M1-ter} over all $ R \ge R_\gamma $ and $z_R \in \partial B_R$, we end up with
		\begin{equation*}\label{M3-complement}
\left\|  u  \right\|_{L^\infty\left( B_{R_\gamma}^c \right)}
 \le  C \left( \Lambda + \left\| u \right\|_{L^\infty\left(\mathbb{R}^N\right)}^{m-1} \right)^{ \frac{N-\gamma}{(N-\gamma)(m-1)+p_0(2-\gamma)} } \left\| u \right\|_{0 , \rho}^{ \frac{2-\gamma}{(N-\gamma)(m-1)+p_0(2-\gamma)} } \left\| u \right\|_{L^\infty\left( \mathbb{R}^N \right)}^{\frac{(p_0-1)(2-\gamma)}{(N-\gamma)(m-1)+p_0(2-\gamma)}} ,
	\end{equation*}	
	which, in combination with \eqref{M3}, ensures that
	\begin{equation*}\label{M3-complement-bis}
\left\|  u  \right\|_{L^\infty\left( \mathbb{R}^N \right)}
 \le  C \left( \Lambda + \left\| u \right\|_{L^\infty\left(\mathbb{R}^N\right)}^{m-1} \right)^{ \frac{N-\gamma}{(N-\gamma)(m-1)+p_0(2-\gamma)} } \left\| u \right\|_{0 , \rho}^{ \frac{2-\gamma}{(N-\gamma)(m-1)+p_0(2-\gamma)} } \left\| u \right\|_{L^\infty\left( \mathbb{R}^N \right)}^{\frac{(p_0-1)(2-\gamma)}{(N-\gamma)(m-1)+p_0(2-\gamma)}} ,
 	\end{equation*}	
namely
	\begin{equation*}\label{M4-1}
\left\|  u  \right\|_{L^\infty\left( \mathbb{R}^N \right)} \le C \left( \Lambda + \left\| u \right\|_{L^\infty\left(\mathbb{R}^N\right)}^{m-1} \right)^{ \frac{N-\gamma}{(N-\gamma)(m-1)+2-\gamma} } \left\| u \right\|_{0 , \rho}^{ \frac{2-\gamma}{(N-\gamma)(m-1)+2-\gamma} } .
	\end{equation*}
Estimate \eqref{m11} thus follows as a direct application of \cite[Lemma 1.3]{BCP}.
	\end{proof}
	
By taking advantage of Proposition \ref{p1}, we can now establish the analogue of \cite[Proposition 1.3]{BCP}, that is a stability estimate for the $ \| \cdot \|_{0,\rho} $ norm along with a global smoothing effect.
	
	\begin{pro}\label{p-transl}
		Let $ u_0 \in L^1_{\mathrm{loc}}\!\left( \mathbb{R}^N , \rho \right)  $ be such that $ \left\| u_0 	\right\|_{0,\rho} < + \infty $, and let $ u $ be the corresponding solution of \eqref{wpme}. Then there exist positive constants $ C_1$ and $ C_2 $, depending only on $ N,m,\gamma,\underline{C},\overline{C}$, such that
		\begin{equation}\label{l1 stability}
			\left\| u(t) \right\|_{0,\rho} \leq C_1 \left\| u_0 \right\|_{0,\rho} \qquad \forall t>0 \, ,
		\end{equation}
		\begin{equation}\label{smooth-parab-transl}
			\left\| u(t) \right\|_{L^\infty\left(\mathbb{R}^N\right)} \leq C_2 \left(  t^{-\lambda} \left\| u_0 \right\|_{0,\rho}^{\theta\lambda} + \left\| u_0 \right\|_{0,\rho} \right) \qquad \forall t>0 \, .
		\end{equation}
	\end{pro}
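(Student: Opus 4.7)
The proof follows the Bénilan--Crandall--Pierre strategy from \cite{BCP}, suitably adapted to the weighted setting. The key observation is that Proposition \ref{p1} is already an elliptic $L^\infty$ bound in terms of the $\|\cdot\|_{0,\rho}$ norm and a subharmonic defect $\Lambda$; to turn it into the parabolic smoothing \eqref{smooth-parab-transl}, it suffices to produce at each time $t>0$ a bound of the form $\Delta(u^m)\ge -\Lambda(t)\,\rho\,u$ with $\Lambda(t)\sim t^{-1}$, and to control $\|u(t)\|_{0,\rho}$ uniformly in $t$.

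\textbf{Smoothing assuming stability.} Reducing to smooth nonnegative data via the approximation scheme from \cite{MP}, I would invoke an Aronson--B\'enilan-type estimate in the form $u_t\ge -u/((m-1)t)$, which for classical solutions of $\rho u_t=\Delta(u^m)$ survives since the weight cancels when the equation is differentiated in time. Rewriting this as $\Delta(u^m)=\rho\,u_t\ge -\rho u/((m-1)t)$ and applying Proposition \ref{p1} at each $t>0$ with $\Lambda=1/((m-1)t)$ yields
$$\|u(t)\|_{L^\infty(\mathbb{R}^N)}\le C\bigl(t^{-\lambda}\|u(t)\|_{0,\rho}^{\theta\lambda}+\|u(t)\|_{0,\rho}\bigr),$$
so \eqref{smooth-parab-transl} reduces to \eqref{l1 stability}. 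Sign-changing solutions are then handled by comparison with the nonnegative solutions having data $(u_0)_\pm$.

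\textbf{Stability.} For \eqref{l1 stability}, the plan is to test the equation against the cutoff $\psi_R$ introduced in \eqref{m-def-psiR}, supported in $B_{R^{\gamma/2}}(z_R)$ and whose second derivatives are controlled by \eqref{m18-transl}. Using Kato's inequality $\rho\,\partial_t|u|\le\Delta(|u|^m)$ and integrating by parts,
$$\frac{d}{dt}\int_{\mathbb{R}^N}|u|\,\psi_R\,\rho\,dx\le\int_{\mathbb{R}^N}|u|^m\,|\Delta\psi_R|\,dx.$$
Thanks to the precise matching between the decay of $\rho$ and of $|\Delta\psi_R|$ (both of order $R^{-\gamma}$ on the annular transition region, by the lower bound $\rho\gtrsim R^{-\gamma}$ on $B_{R^{\gamma/2}}(z_R)$ that follows from \eqref{weight-cond} and \eqref{est-R-gamma}), together with the elementary bound $|u|^m\le\|u(t)\|_\infty^{m-1}|u|$, one arrives at
$$M(t)\le CM_0+C\int_0^t\|u(s)\|_\infty^{m-1}M(s)\,ds,\qquad M(t):=\|u(t)\|_{0,\rho},\ M_0:=\|u_0\|_{0,\rho}.$$
Substituting the smoothing bound and exploiting the identity $1-\lambda(m-1)=\theta\lambda>0$ (easily verified from \eqref{def-lambda-theta}) yields a nonlinear Gronwall inequality with singular-in-time integrand that is integrable at $0$. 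A short bootstrap then gives $M(t)\le 2CM_0$ on an interval $(0,t^\ast]$ with $t^\ast\sim M_0^{-(m-1)}$.

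\textbf{Extension to all $t>0$ and main obstacle.} The scaling $u_\eta(x,t):=\eta\,u(x,\eta^{m-1}t)$ preserves \eqref{wpme} and transforms $M_0$ into $\eta M_0$, so by rescaling and combining with the semigroup property one can hope to propagate the short-time bound to all $t>0$, yielding \eqref{l1 stability}. The hard part will be precisely this extension step: the small-time bootstrap is fairly standard, but obtaining a constant $C_1$ that is genuinely uniform in $t$ (and not merely $t$-dependent, as a na\"ive Gronwall would yield, since $\|u\|_\infty^{m-1}$ does not decay integrably on $(t^\ast,+\infty)$ in general) requires systematic use of the scaling invariance together with Proposition \ref{p1} applied at each positive time. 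A secondary subtlety is the rigorous availability of Aronson--B\'enilan in the weighted setting, which I expect to justify by approximation with strictly positive classical data (for which the inequality is standard on smooth weights) and passage to the limit within the uniqueness class of \cite{MP}.
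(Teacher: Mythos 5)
Your core argument follows the same route as the paper: testing the equation against the cutoffs $\psi_R$ of \eqref{m-def-psiR} (together with a fixed cutoff near the origin, which you omit but which is needed to control the part of the norm \eqref{norm-trasl} coming from small $R$ -- a minor point), Kato's inequality, and then the B\'enilan--Crandall bound $u_t\ge -u/((m-1)t)$ combined with Proposition \ref{p1} to close a nonlinear Gr\"onwall inequality; this yields $\left\| u(t)\right\|_{0,\rho}\le C\left\| u_0\right\|_{0,\rho}$ and the smoothing bound for $t\lesssim \left\| u_0\right\|_{0,\rho}^{-(m-1)}$, exactly as in the paper. (One small correction of attribution: what is available in the weighted setting, by \cite[Proposition 3.3]{MP}, is the B\'enilan--Crandall inequality $u_t\ge-u/((m-1)t)$, which follows from the time--amplitude scaling you mention; the genuine Aronson--B\'enilan estimate on $\Delta u^{m-1}$ is precisely one of the tools the paper states is \emph{not} available here. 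Since you only use the former, your argument is fine, but your justification ``the weight cancels when the equation is differentiated in time'' is not the right reason.)

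The genuine gap is the step you yourself flag as the hard part: passing from $t\in\big(0,c\left\| u_0\right\|_{0,\rho}^{-(m-1)}\big)$ to all $t>0$. The scaling $u_\eta(x,t)=\eta\,u(x,\eta^{m-1}t)$ cannot do this: the short-time statement is itself invariant under that scaling (the data norm becomes $\eta\left\| u_0\right\|_{0,\rho}$ while the admissible time interval becomes $\eta^{-(m-1)}$ times shorter in the original variables), so after undoing the scaling you recover exactly the same restriction $t\lesssim\left\| u_0\right\|_{0,\rho}^{-(m-1)}$; and iterating the short-time bound via the semigroup property multiplies the constant at each step, giving a bound that grows with $t$ -- the very defect you anticipate. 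The paper's resolution is elementary and uses no scaling at all: both \eqref{l1 stability} and \eqref{smooth-parab-transl} tolerate a term proportional to $\left\| u_0\right\|_{0,\rho}$ on the right-hand side, and the $L^\infty$ norm of the constructed solution is non-increasing in time. Hence for $t\ge t_0:=c\left\| u_0\right\|_{0,\rho}^{-(m-1)}$ one simply writes $\left\| u(t)\right\|_{L^\infty}\le\left\| u(t_0)\right\|_{L^\infty}\le C\big(t_0^{-\lambda}\left\| u_0\right\|_{0,\rho}^{\theta\lambda}+\left\| u_0\right\|_{0,\rho}\big)=C\left(c^{-\lambda}+1\right)\left\| u_0\right\|_{0,\rho}$, using $\lambda(m-1)+\theta\lambda=1$ from \eqref{def-lambda-theta}; this gives \eqref{smooth-parab-transl} for all times, and since in dimension $N\ge 3$ the norm \eqref{norm-trasl} is controlled by the $L^\infty$ norm (the exponents balance exactly), the same bound yields \eqref{l1 stability} for $t\ge t_0$ as well. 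Without this observation (or some substitute for it) your proof is incomplete for large times.
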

	\begin{proof}
		We combine ideas from the proofs of  \cite[Proposition 1.3]{BCP} and \cite[Proposition 3.4]{MP}. In particular, with no loss of generality we can assume that $ u \ge 0 $ and $ u_0 \in L^1\!\left( \mathbb{R}^N , \rho \right)  \cap L^\infty\!\left( \mathbb{R}^N \right) $, so all the quantities involved below are well defined and finite.
		
		  Upon adopting the same notations as in the proof of Proposition \ref{p1}, we obtain
		\begin{equation}\label{m32-t}
			\begin{aligned}
				\frac{d}{d t} \int_{\mathbb{R}^N} u(t) \, \phi_{R_\gamma} \, \rho \,  dx  =  & \, \int_{\mathbb{R}^N} \left[\Delta u(t)^m \right]  \phi_{R_\gamma} \, dx
				=  \, \int_{\mathbb{R}^N} u(t)^m \left(\Delta \phi_{R_\gamma} \right)  dx \\
				\leq & \, C  \int_{B_{2 R_\gamma} \setminus B_{R_\gamma} } u(t)^m \, dx \leq  {C} \left\| u(t) \right\|^{m-1}_{L^\infty\left(\mathbb{R}^N\right)}  \int_{B_{2 R_\gamma}} u(t) \, \rho \, dx \,  ,
			\end{aligned}
		\end{equation}
		where $C>0$ is a generic constant that depends at most on $N,m,\gamma,\underline{C},\overline{C}$, that we do not relabel from line to line. Note that the computation is justified since $ u $ can be shown to be a strong solution, i.e.~its time derivative is actually a function (see \cite[Proposition 3.3]{MP}). By integrating \eqref{m32-t} in $ (0,t) $, for any $t>0$, we infer
		\begin{equation*}\label{m33-t}
			\int_{\mathbb{R}^N} u(t) \, \phi_{R_\gamma} \, \rho \, dx  \leq \int_{\mathbb{R}^N} u_0 \, \phi_{R_\gamma} \, \rho \, dx +  {C} \int_0^t \left\| u(s) \right\|_{L^\infty\left(\mathbb{R}^N\right)}^{m-1} \int_{B_{2 R_\gamma}} u(s) \, \rho \, dx ds \, ,
		\end{equation*}
		which, upon setting $ g(t):= \left\| u(t) \right\|_{0,\rho} $, implies
				\begin{equation}\label{m34-t}
		\int_{B_{R_\gamma}} u(t) \,  \rho \, dx  \leq {C} \left( g(0) + \int_0^t \left\| u(s) \right\|_{L^\infty\left(\mathbb{R}^N\right)}^{m-1}  g(s) \, ds \right) .
		\end{equation}
		Now, for arbitrary $ R \ge R_\gamma $ and $ z_R \in \partial B_R $, we perform a similar computation to \eqref{m32-t} with $ \phi_{R_\gamma} $ replaced by $ \psi_{R} $ (recall \eqref{m-def-psiR}), obtaining
				\begin{equation}\label{m32-t-bis}
		\begin{aligned}
		\frac{d}{d t} \int_{\mathbb{R}^N} u(t) \, \psi_R \, \rho \,  dx  = & \, \int_{\mathbb{R}^N} \left[\Delta u(t)^m\right]  \psi_R \, dx
		=  \, \int_{\mathbb{R}^N} u(t)^m \left(\Delta \psi_R\right)  dx \\
		\leq & \, \frac{C}{R^\gamma}  \int_{ B_{R^{\gamma/2}}(z_R)  } u(t)^m \, dx
		\leq  {C} \left\| u(t) \right\|^{m-1}_{L^\infty\left(\mathbb{R}^N\right)}  \int_{ B_{R^{\gamma/2}}(z_R) } u(t) \, \rho \, dx \,  .
		\end{aligned}
		\end{equation}
		If we integrate \eqref{m32-t-bis} and multiply both sides by $ R^{-\gamma(N-2)/{2} } $, we end up with
				\begin{equation}\label{m33-t-10}
				\begin{aligned}
		R^{-\frac{\gamma(N-2)}{2} } \int_{ B_{\frac 1 2 R^{\gamma / 2}} (z_R) } u(t) \, \rho \, dx  \leq & \, R^{-\frac{\gamma(N-2)}{2} }  \int_{ B_{R^{\gamma / 2}(z_R) } } u_0 \, \rho \, dx \\
		   & +  C \int_0^t \left\| u(s) \right\|_{L^\infty\left(\mathbb{R}^N\right)}^{m-1} \left( R^{-\frac{\gamma(N-2)}{2} } \int_{B_{R^{\gamma/2}}(z_R)} u(s) \, \rho \, dx \right) ds \, .
		\end{aligned}
		\end{equation}
       By combining \eqref{m34-t}, \eqref{m33-t-10}, and taking the supremum over $ R \ge R_\gamma $, we deduce that
				\begin{equation*}\label{m34-u}
\int_{B_{R_\gamma}} u(t) \,  \rho \, dx + \sup_{\substack{ R \ge R_\gamma \\[0.4mm] z_R \in \partial B_{R} }} \frac{ \int_{ B_{\frac 1 2 R^{\gamma / 2}} (z_R)} u(t) \, \rho \, dx}{R^{\frac{\gamma(N-2)}{2} } } \leq {C} \left( g(0) + \int_0^t \left\| u(s) \right\|_{L^\infty\left(\mathbb{R}^N\right)}^{m-1}  g(s) \, ds \right) .
\end{equation*}
On the other hand, it is readily seen that the quantity on the left-hand side is equivalent to $ \left\| u(t) \right\|_{0,\rho} $, therefore
				\begin{equation}\label{m34-v}
g(t) \leq {C} \left( g(0) + \int_0^t \left\| u(s) \right\|_{L^\infty\left(\mathbb{R}^N\right)}^{m-1}  g(s) \, ds \right) \qquad \forall t> 0 \, .
\end{equation}
Since $u$ satisfies the B\'enilan-Crandall inequality (see again \cite[Proposition 3.3]{MP}), that is
$$
\rho \, u_t \ge - \frac{\rho \, u}{(m-1)t} \qquad \text{in } \mathbb{R}^N \times (0,+\infty) \, ,
$$
we are in position to apply Proposition \ref{p1} to $ u(t) $, which yields (up to a different $C_0$)
				\begin{equation}\label{m34-z}
		\left\| u(t) \right\|_{L^\infty \left( \mathbb{R}^N \right)} \leq C_0 \left( t^{-\lambda} \left\| u(t) \right\|_{0,\rho}^{\theta \lambda} + \left\| u(t) \right\|_{0,\rho}\right)  \qquad \forall t > 0 \, .
\end{equation}
By substituting such a bound into \eqref{m34-v}, we end up with
				\begin{equation}\label{n11}
g(t) \leq {C} \left[ g(0) + \int_0^t \left( s^{-\lambda(m-1)} g(s)^{\theta \lambda (m-1)+1}  + g(s)^m \right) ds \right] \qquad \forall t>0 \, .
\end{equation}
At this point, it is enough to observe that the integral inequality \eqref{n11} is exactly of the same type as \cite[formula (3.37)]{MP}, hence ODE comparison methods yield
				\begin{equation}\label{n12}
g(t) \le C \, g(0) \qquad \forall t \in \left( 0 , \frac{c}{g(0)^{m-1}}\right) ,
\end{equation}
where $ c>0 $ is another generic constant depending at most on $N,m,\gamma,\underline{C},\overline{C}$. As a result, by plugging \eqref{n12} into \eqref{m34-z}, we obtain
		\begin{equation}\label{smooth-parab-transl-almost}
\left\| u(t) \right\|_{L^\infty\left(\mathbb{R}^N\right)} \leq C \left(  t^{-\lambda} \left\| u_0 \right\|_{0,\rho}^{\theta\lambda} + \left\| u_0 \right\|_{0,\rho} \right) \quad \forall t \in \left( 0 , t_0 \right) , \qquad t_0 :=  \frac{c}{\left\| u_0 \right\|_{0,\rho}^{m-1}}  \, .
\end{equation}
The validity of \eqref{smooth-parab-transl} is then a simple consequence of \eqref{smooth-parab-transl-almost} along with the fact that the $ L^\infty $ norm of $ u(t) $ is not increasing in time. Indeed,
				\begin{equation}\label{n13}
\begin{gathered}
\left\| u(t) \right\|_{L^\infty\left(\mathbb{R}^N\right)}  \le \left\| u(t_0) \right\|_{L^\infty\left(\mathbb{R}^N\right)} \le C \left(  t_0^{-\lambda} \left\| u_0 \right\|_{0,\rho}^{\theta\lambda} + \left\| u_0 \right\|_{0,\rho} \right) = C\left( c^{-\lambda}+1 \right) \left\| u_0 \right\|_{0,\rho}  \qquad
 \forall t \ge t_0 \, ,
 \end{gathered}
\end{equation}
since $ \lambda(m-1)+\theta \lambda = 1 $. Finally, upon noticing that the $ L^\infty $ norm controls $ \| \cdot \|_{0,\rho} $, from \eqref{n12} and~\eqref{n13} it is immediate to deduce \eqref{l1 stability}.
	\end{proof}
	
	\subsection{Two nonstandard Cauchy estimates} \label{subsec: unif estimates}
	Our goal here is to show that the $ L^1 $ norms arising from admissible weights (recall in particular \eqref{A1}--\eqref{A2}), as well as the norm $ \| \cdot \|_{0,\rho} $, are well suited to obtain Cauchy estimates for solutions of \eqref{wpme}.
	
	\begin{pro}\label{cauchy-adm}
		Let $ \Phi : \mathbb{R}^N \to \mathbb{R}^+ $ be any weight complying with \eqref{A1}--\eqref{A2}, and let $ u_0 , v_0 \in L^1\!\left( \mathbb{R}^N , \Phi \rho \right)  $  be such that $ \left\| u_0 	\right\|_{0,\rho} +\left\| v_0 	\right\|_{0,\rho} < + \infty $. Let $ u $ and $v $ be the corresponding solutions of \eqref{wpme} starting from $ u_0 $ and $ v_0 $, respectively.  Then there exists a positive constant $C_3$, depending only on $ N,m,\gamma,\underline{C},\overline{C}$ and the constant $K$ in \eqref{A2}, such that
		\begin{equation}\label{stability-C-adm}
		\begin{aligned}
		\left\| u(t) - v(t) \right\|_{L^1\left( \mathbb{R}^N , \Phi \rho \right)}
	\le \,  e^{C_3 \left[ t^{\theta\lambda} \left( \left\| u_0 \right\|_{0,\rho} \vee \left\| v_0 \right\|_{0,\rho} \right)^{\theta \lambda (m-1)} + t \left( \left\| u_0 \right\|_{0,\rho} \vee \left\| v_0 \right\|_{0,\rho} \right)^{m-1} \right] } \left\| u_0 - v_0 \right\|_{L^1\left( \mathbb{R}^N , \Phi \rho \right)}
		\end{aligned}
		\end{equation}
		for all $t>0$.
		\end{pro}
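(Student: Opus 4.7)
The plan is to run the classical Kato/Pierre-style weighted $L^1$-contraction argument against $\Phi$, and then to close a Gronwall loop using the $L^\infty$ smoothing from Proposition~\ref{p-transl}.

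First, I would write the equation satisfied by the difference $w:=u-v$, namely $\rho\, w_t=\Delta(u^m-v^m)$ in the distributional sense, and test it against $p_\varepsilon'(u^m-v^m)\,\Phi$, where $p_\varepsilon\in C^2(\R)$ is a convex regularization of $|\cdot|$ with $p_\varepsilon''\ge 0$ and $p_\varepsilon'\to\sign$. Since $\sign(u^m-v^m)=\sign(w)$, two integrations by parts convert the right-hand side into a sum of a nonpositive term containing $p_\varepsilon''$ (to be discarded) and a term of the form $\int p_\varepsilon(u^m-v^m)\,\Delta\Phi\,\ds x$. Sending $\varepsilon\to 0^+$ should then produce the Kato-type inequality
\begin{equation*}
\frac{d}{dt}\int_{\R^N}|w(t)|\,\Phi\,\rho\,\ds x \;\le\; \int_{\R^N}\left|u(t)^m-v(t)^m\right|\,|\Delta\Phi|\,\ds x \, .
\end{equation*}

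Next, I would combine the elementary pointwise bound $|u^m-v^m|\le m(|u|\vee|v|)^{m-1}|w|$ with the admissibility condition \eqref{A2} on $\Phi$ and the lower bound on $\rho$ from \eqref{weight-cond}, which jointly give $|\Delta\Phi(x)|\le C\,\Phi(x)\,\rho(x)$ with $C$ depending only on $\gamma,\underline C,K$. The previous inequality then becomes
\begin{equation*}
\frac{d}{dt}\int_{\R^N}|w(t)|\,\Phi\,\rho\,\ds x \;\le\; C\,\Big(\|u(t)\|_{L^\infty(\R^N)}^{m-1}\vee\|v(t)\|_{L^\infty(\R^N)}^{m-1}\Big)\int_{\R^N}|w(t)|\,\Phi\,\rho\,\ds x \, .
\end{equation*}
At this point I would insert the smoothing estimate \eqref{smooth-parab-transl} applied to both $u$ and $v$, set $M:=\|u_0\|_{0,\rho}\vee\|v_0\|_{0,\rho}$, and integrate on $(0,t)$. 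The algebraic identity $1-\lambda(m-1)=\theta\lambda$, which follows directly from \eqref{def-lambda-theta}, then produces precisely the combination $t^{\theta\lambda}M^{\theta\lambda(m-1)}+t\,M^{m-1}$ that appears in \eqref{stability-C-adm}, and Gronwall's inequality closes the argument.

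The main obstacle I anticipate is a rigorous justification of the Kato-type inequality displayed above: the constructed solutions are not \emph{a priori} regular and the test weight $\Phi$ has unbounded support. To handle this I would insert a further spatial cutoff in the test function, carry out the two integrations by parts in its presence, and only then pass to the limits in the cutoff and in $\varepsilon$ separately. The error produced by the cutoff should vanish thanks to the global $L^\infty$ bound of Proposition~\ref{p-transl} together with the decay of $|\Delta\Phi|$ and $|\nabla\Phi|$ ensured by \eqref{A2}. A preliminary approximation of $u_0,v_0$ by bounded data in $L^1(\R^N,\rho)\cap L^1(\R^N,\Phi\rho)$, combined with the $L^1_\rho$-stability of solutions constructed in \cite{MP}, should reduce the problem to truly strong solutions on which the above computations are fully legitimate, after which a final passage to the limit yields~\eqref{stability-C-adm}.
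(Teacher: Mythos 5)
Your overall strategy is the same as the paper's: a Kato-type inequality tested against $\Phi$ times a spatial cutoff, absorption of $\Delta\Phi$ through \eqref{A2} and the lower bound in \eqref{weight-cond}, the smoothing effect \eqref{smooth-parab-transl} applied to both solutions, the identity $\lambda(m-1)+\theta\lambda=1$, and Gr\"onwall, all after a preliminary reduction to bounded data in $L^1\!\left(\R^N,\rho\right)$ for which the solutions are strong. The one step whose justification does not hold as stated is the removal of the spatial cutoff. You claim the cutoff error vanishes ``thanks to the global $L^\infty$ bound of Proposition \ref{p-transl} together with the decay of $|\Delta\Phi|$ and $|\nabla\Phi|$ ensured by \eqref{A2}'', but these ingredients alone are insufficient. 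The error term has the form
\begin{equation*}
\int_{B_{2R}\setminus B_R}\left|u^m-v^m\right|\left(\Phi\left|\Delta\phi_R\right|+2\left|\nabla\Phi\right|\left|\nabla\phi_R\right|\right)dx \, ,
\end{equation*}
and if you only use $L^\infty$ bounds on $u,v$ together with \eqref{A2} (which allows $|\nabla\Phi(x)|\le K(1+|x|)^{1-\gamma}$, possibly growing, and gives $|\Delta\Phi|\lesssim(1+|x|)^{-\gamma}$ with $\Phi$ merely bounded), the annulus integrals are of order $R^{N-2}+R^{N-\gamma}$, which diverges as $R\to+\infty$ since $N\ge3$ and $\gamma<2$.

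What actually closes this step --- and what the paper uses --- is the integrability of the approximating solutions: after your reduction to data in $L^1\!\left(\R^N,\rho\right)\cap L^\infty\!\left(\R^N\right)$, the solutions belong to $L^\infty\!\left((0,+\infty);L^1\!\left(\R^N,\rho\right)\right)$; then, bounding $|u^m-v^m|\le m\left(\norm{u_0}_{L^\infty}\vee\norm{v_0}_{L^\infty}\right)^{m-1}|u-v|$ and observing that $\Phi/R^2+|\nabla\Phi|/R\lesssim R^{-\gamma}\lesssim\rho$ on $B_{2R}\setminus B_R$, the cutoff error is controlled by $\int_{B_{2R}\setminus B_R}|u-v|\,\rho\,dx$, which tends to $0$ as $R\to+\infty$ by dominated convergence, uniformly enough to pass to the limit after time integration. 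Since you already perform exactly this reduction, the missing ingredient is available inside your own argument, but as written the stated reason for discarding the cutoff term would fail. The remaining points (regularized sign function, Lagrange bound, exponent bookkeeping producing $t^{\theta\lambda}M^{\theta\lambda(m-1)}+tM^{m-1}$, and the final passage back to the original data via local $L^1(\rho)$ stability and lower semicontinuity) match the paper's proof.
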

	\begin{proof}
		Similarly to the proof of \cite[Proposition 3.5]{MP}, first of all we formally take the time derivative of the following quantity:
		\begin{equation}\label{Phiphi-1}
		\begin{aligned}
		\frac{d}{dt} \int_{\mathbb{R}^N} \left| u(t) - v(t) \right| \Phi \, \phi_R \, \rho \, dx = & \, \int_{\mathbb{R}^N} \sign \! \left( u(t) - v(t) \right) \Delta \! \left[ u(t)^m - v(t)^m  \right] \Phi \, \phi_R \, dx \\
		\le & \, \int_{\mathbb{R}^N} \left| u(t)^m - v(t)^m  \right| \Delta\! \left( \Phi \, \phi_R \right) dx \, ,
		\end{aligned}
		\end{equation}
where $ \phi_R $ ($ R \ge 1 $) is a standard cutoff function that vanishes outside $ B_{2R} $ and is everywhere equal to $1$ in $ B_R $. As observed in the aforementioned proof, the computation takes advantage of Kato's inequality and is rigorously justified provided $ u_0,v_0 \in L^1\!\left( \mathbb{R}^N , \rho \right)  \cap L^\infty\!\left( \mathbb{R}^N \right) $, since the corresponding solutions are strong. In particular, we have that $ u,v \in L^\infty\!\left( (0,+\infty) ; L^1\!\left( \mathbb{R}^N , \rho \right) \right) \cap L^\infty\!\left( \mathbb{R}^N  \times (0,+\infty)\right) $ (see \cite[Proposition 3.3]{MP}), a fact that we take for granted from now on. This is not restrictive via a standard approximation procedure of the actual initial data with data in $L^1\!\left( \mathbb{R}^N , \rho \right)  \cap L^\infty\!\left( \mathbb{R}^N \right) $. We then estimate the rightmost term in \eqref{Phiphi-1}, using \eqref{A2}, as follows:
\begin{equation}\label{Phiphi-2}
\begin{aligned}
  \int_{\mathbb{R}^N} \left| u(t)^m - v(t)^m  \right| \left|  \Delta\! \left( \Phi \, \phi_R \right) \right| dx
 =  & \, \int_{\mathbb{R}^N} \left| u(t)^m - v(t)^m  \right| \left| \phi_R \,  \Delta \Phi + \Phi \, \Delta \phi_R + 2 \nabla \Phi \cdot \nabla \phi_R \right| dx \\
 \le & \, K \int_{\mathbb{R}^N} \left| u(t)^m - v(t)^m  \right| \frac{\Phi}{\left( 1 + |x| \right)^\gamma} \, dx  \\
 & + C  \int_{ B_{2R} \setminus B_R } \left| u(t)^m - v(t)^m  \right| \left[ \frac{\Phi}{R^2} + \frac{2K}{R\left(1 + |x| \right)^{\gamma-1}} \right] dx \, ,
 \end{aligned}
\end{equation}
where $C>0$ is a constant related to $ \phi_R $ only but independent of $R$. By virtue of \eqref{weight-cond}, Lagrange's theorem and the smoothing estimate \eqref{smooth-parab-transl}, we infer
		\begin{equation*}\label{Phiphi-3}
		\begin{aligned}
		& \, \int_{\mathbb{R}^N} \left| u(t)^m - v(t)^m  \right| \frac{\Phi}{\left( 1 + |x| \right)^\gamma} \, dx \\
		\le & \, \underline{C}^{-1} \, m \left( \left\| u(t) \right\|_{L^\infty\left( \mathbb{R}^N \right)} \vee \left\| v(t) \right\|_{L^\infty\left( \mathbb{R}^N \right)}  \right)^{m-1} \int_{\mathbb{R}^N} \left| u(t)- v(t)  \right| \Phi \, \rho \, dx \\
		\le & \, \underline{C}^{-1} \, {m \, C_2^{m-1}} \left[  t^{-\lambda} \left( \left\| u_0 \right\|_{0,\rho} \vee \left\| v_0 \right\|_{0,\rho} \right)^{\theta\lambda} + \left( \left\| u_0 \right\|_{0,\rho} \vee \left\| v_0 \right\|_{0,\rho} \right) \right]^{m-1} \int_{\mathbb{R}^N} \left| u(t)- v(t)  \right| \Phi \, \rho \, dx \, .
		\end{aligned}
		\end{equation*}
As concerns the last integral in \eqref{Phiphi-2}, upon recalling that $ \Phi \in L^\infty\!\left(\mathbb{R}^N\right) $  and that $ R^{-\gamma} \le \rho $ (up to constants) in the annulus $ B_{2R} \setminus B_R $, we can deduce that 	
\begin{equation}\label{Phiphi-4}
\begin{aligned}
& \, \int_{ B_{2R} \setminus B_R } \left| u(t)^m - v(t)^m  \right| \left[ \frac{\Phi}{R^2} + \frac{2K}{R\left(1 + |x| \right)^{\gamma-1}} \right] dx \\
 \le & \,  C \left( \left\| u_0 \right\|_{L^\infty\left( \mathbb{R}^N \right)} \vee \left\| v_0 \right\|_{L^\infty\left( \mathbb{R}^N \right)}  \right)^{m-1} \int_{ B_{2R} \setminus B_R } \left| u(t) - v(t)  \right| \rho \, dx \, ,
\end{aligned}
\end{equation}
where $ C>0 $ is another suitable constant independent of $R$ and $t$. Note that we implicitly exploited the time decrease of $ L^\infty $ norms. As a result, by integrating \eqref{Phiphi-1} in $ (0,t) $, for an arbitrary $ t>0 $, and taking advantage of \eqref{Phiphi-2}--\eqref{Phiphi-4}, we end up with
\begin{equation}\label{Phiphi-5}
\begin{aligned}
& \, \int_{\mathbb{R}^N} \left| u(t) - v(t) \right| \Phi \, \phi_R \, \rho \, dx \\
 \le & \, \int_{\mathbb{R}^N} \left| u_0 - v_0 \right| \Phi \, \phi_R \, \rho \, dx \\
  & + C \int_0^t \left[  s^{-\lambda} \left( \left\| u_0 \right\|_{0,\rho} \vee \left\| v_0 \right\|_{0,\rho} \right)^{\theta\lambda} + \left( \left\| u_0 \right\|_{0,\rho} \vee \left\| v_0 \right\|_{0,\rho} \right) \right]^{m-1} \int_{\mathbb{R}^N} \left| u(s)- v(s)  \right| \Phi \, \rho \, dx ds \\
 & +  C \left( \left\| u_0 \right\|_{L^\infty\left( \mathbb{R}^N \right)} \vee \left\| v_0 \right\|_{L^\infty\left( \mathbb{R}^N \right)}  \right)^{m-1} \int_0^t \int_{ B_{2R} \setminus B_R } \left| u(s) - v(s)  \right| \rho \, dx ds \, ,
\end{aligned}
\end{equation}
where now $C>0$ stands for a generic constant as in the statement. Since both $u$ and $v$ belong to $ L^\infty\!\left( (0,+\infty) ; L^1\!\left( \mathbb{R}^N , \rho \right) \right)$, we can safely let $ R \to +\infty $ in \eqref{Phiphi-5}, obtaining
\begin{equation*}\label{Phiphi-6}
\begin{aligned}
& \, \int_{\mathbb{R}^N} \left| u(t) - v(t) \right| \Phi \, \rho \, dx \\
 \le & \, \int_{\mathbb{R}^N} \left| u_0 - v_0 \right| \Phi \,  \rho \, dx \\
  & + C \int_0^t \left[  s^{-\lambda} \left( \left\| u_0 \right\|_{0,\rho} \vee \left\| v_0 \right\|_{0,\rho} \right)^{\theta\lambda} + \left( \left\| u_0 \right\|_{0,\rho} \vee \left\| v_0 \right\|_{0,\rho} \right) \right]^{m-1} \int_{\mathbb{R}^N} \left| u(s)- v(s)  \right| \Phi \, \rho \, dx ds
\end{aligned}
\end{equation*}
for all $t>0$. The claimed estimate \eqref{stability-C-adm} thus follows from a direct application of Gr\"onwall's inequality.
		\end{proof}

\begin{oss}\rm
By combining the $ L^1\!\left( \mathbb{R}^N , \rho \right) $ continuity of solutions to \eqref{wpme} starting from $ L^1\!\left( \mathbb{R}^N , \rho \right) $ data, see \cite[Proposition 3.3]{MP} and references therein, and the just proved stability estimate \eqref{stability-C-adm}, it is not difficult to show that initial data complying with the assumptions of Proposition \ref{cauchy-adm} give rise to solutions that are continuous curves in $ L^1\!\left( \mathbb{R}^N , \Phi \rho \right) $.
\end{oss}

The following proposition, whose proof is a simple modification of the arguments of Propositions~\ref{p-transl} and \ref{cauchy-adm}, will be crucially used in Section \ref{unique} to control the tail behavior of the (rescaled) solutions uniformly in time.

	\begin{pro}\label{cauchy-adm-0norm}
Let $ u_0 , v_0 \in L^1_{\mathrm{loc}}\!\left( \mathbb{R}^N , \rho \right)  $  be such that $ \left\| u_0 	\right\|_{0,\rho} +\left\| v_0 	\right\|_{0,\rho} < + \infty $. Let $ u $ and $v $ be the corresponding solutions of \eqref{wpme} starting from $ u_0 $ and $ v_0 $, respectively.  Then there exist positive constants $C_4,C_5$, depending only on $ N,m,\gamma,\underline{C},\overline{C}$, such that
		\begin{equation}\label{stability-C-adm-bis}
		\begin{aligned}
		\left\| u(t) - v(t) \right\|_{0,\rho}
	\le C_4 \, e^{C_5 \left[ t^{\theta\lambda} \left( \left\| u_0 \right\|_{0,\rho} \vee \left\| v_0 \right\|_{0,\rho} \right)^{\theta \lambda (m-1)} + t \left( \left\| u_0 \right\|_{0,\rho} \vee \left\| v_0 \right\|_{0,\rho} \right)^{m-1} \right] } \left\| u_0 - v_0 \right\|_{ 0,\rho}
		\end{aligned}
		\end{equation}
		for all $t>0$.
		\end{pro}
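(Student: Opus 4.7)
The plan is to combine the Kato-type argument behind Proposition \ref{cauchy-adm} with the localization technique based on the cutoffs $\psi_R$ introduced in \eqref{m-def-psiR}, which already proved to be the right tool in Propositions \ref{p1} and \ref{p-transl} to recover the $ \| \cdot \|_{0,\rho} $ norm. As in the previous proofs, by approximation we can assume that $ u_0,v_0 \in L^1\!\left(\mathbb{R}^N,\rho\right) \cap L^\infty\!\left(\mathbb{R}^N\right) $, so that $u$ and $v$ are strong solutions and all the integrals below are finite; we will also take for granted the uniform-in-time $L^\infty$ bound provided by Proposition~\ref{p-transl}.

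First, using Kato's inequality as in \eqref{Phiphi-1} but with the test function $ \phi_{R_\gamma} $ (to control the inner ball $ B_{R_\gamma} $) and then, separately, with $ \psi_R $ for arbitrary $ R \ge R_\gamma $ and $ z_R \in \partial B_R $, I obtain
\begin{equation*}
\frac{d}{dt} \int_{\mathbb{R}^N} |u(t)-v(t)| \, \psi_R \, \rho \, dx \le \int_{\mathbb{R}^N} \left| u(t)^m - v(t)^m \right|  \left| \Delta \psi_R \right| dx \, ,
\end{equation*}
and an analogous inequality with $ \phi_{R_\gamma} $ in place of $ \psi_R $. The essential point is the key pointwise bound \eqref{m18-transl}, i.e.\ $ |\Delta \psi_R| \le K_m R^{-\gamma} \chi_{B_{R^{\gamma/2}}(z_R)} $, combined with the fact that $ \rho \ge c\, R^{-\gamma} $ in the same annulus when $ R \ge R_\gamma $ (as already observed in the proof of Proposition \ref{p1}). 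This allows $ |\Delta \psi_R| $ to be absorbed into $ \rho $ up to a multiplicative constant, and an application of Lagrange's theorem yields
\begin{equation*}
\frac{d}{dt} \int_{\mathbb{R}^N} |u(t)-v(t)| \, \psi_R \, \rho \, dx \le C \left( \| u(t) \|_{L^\infty(\mathbb{R}^N)} \vee \| v(t) \|_{L^\infty(\mathbb{R}^N)} \right)^{m-1} \int_{B_{R^{\gamma/2}}(z_R)} |u(t)-v(t)| \, \rho \, dx \, .
\end{equation*}

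Next, I integrate in $(0,t)$, multiply both sides by $ R^{-\gamma(N-2)/2} $, and note that the integral on the right-hand side divided by $ R^{\gamma(N-2)/2} $ is controlled by $ \| u(s)-v(s) \|_{0,\rho} $. Taking the supremum over $ R \ge R_\gamma $ and $ z_R \in \partial B_R $, combined with the analogous estimate involving $ \phi_{R_\gamma} $ to handle the inner ball $ B_{R_\gamma} $, and recalling the equivalence already used in the proof of Proposition \ref{p-transl} between $ \| \cdot \|_{0,\rho} $ and the quantity
$$
\int_{B_{R_\gamma}} |\cdot| \, \rho \, dx + \sup_{\substack{R \ge R_\gamma \\ z_R \in \partial B_R}} R^{-\gamma(N-2)/2} \int_{B_{R^{\gamma/2}/2}(z_R)} |\cdot| \, \rho \, dx \, ,
$$
I arrive at the integral inequality
\begin{equation*}
\| u(t)-v(t) \|_{0,\rho} \le C \| u_0 - v_0 \|_{0,\rho} + C \int_0^t \left( \| u(s) \|_{L^\infty(\mathbb{R}^N)} \vee \| v(s) \|_{L^\infty(\mathbb{R}^N)} \right)^{m-1} \| u(s)-v(s) \|_{0,\rho} \, ds \, .
\end{equation*}

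To conclude, I substitute the smoothing bound \eqref{smooth-parab-transl} from Proposition \ref{p-transl} applied to $u$ and $v$, i.e.
$$
\| u(s) \|_{L^\infty(\mathbb{R}^N)} \vee \| v(s) \|_{L^\infty(\mathbb{R}^N)} \le C \left( s^{-\lambda} M^{\theta\lambda} + M \right) , \qquad M := \| u_0 \|_{0,\rho} \vee \| v_0 \|_{0,\rho} \, ,
$$
and observe that $ \lambda(m-1)+\theta\lambda = 1 $, so that $\int_0^t s^{-\lambda(m-1)} ds$ is finite and proportional to $ t^{\theta\lambda} $. Gr\"onwall's lemma then yields exactly \eqref{stability-C-adm-bis}. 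The main (mild) obstacle is essentially bookkeeping: one must be careful to control uniformly in $ R $ and $ z_R $ the right-hand sides before taking the supremum, which is what makes the choice of the norm \eqref{norm-trasl} natural and consistent with the decay $ R^{-\gamma} $ of $ |\Delta \psi_R| $.
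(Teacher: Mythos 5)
Your proposal is correct and follows essentially the same route as the paper: Kato's inequality tested against $\phi_{R_\gamma}$ and the translated cutoffs $\psi_R$, the pointwise bound \eqref{m18-transl} together with $\rho\gtrsim R^{-\gamma}$ on $B_{R^{\gamma/2}}(z_R)$, time integration, multiplication by $R^{-\gamma(N-2)/2}$ and suprema to recover the $\|\cdot\|_{0,\rho}$ norm, and finally the smoothing effect \eqref{smooth-parab-transl} plus Gr\"onwall. This is exactly the paper's argument (a fusion of the proofs of Propositions \ref{p-transl} and \ref{cauchy-adm}), so no further comment is needed.
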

	\begin{proof}
	As usual, we will prove \eqref{stability-C-adm-bis} assuming that $u$ and $v$ are $ L^1(\mathbb{R}^N , \rho) \cap L^\infty(\mathbb{R}^N) $ solutions, since it is easily checked that the estimate is stable with respect to the truncation approximation (see e.g.~the proof of Lemma \ref{constr-weak}). For arbitrary $ R \ge R_\gamma $ (where $R_\gamma$ is defined in \eqref{est-R-gamma}) and $ z_R \in \partial B_R $, we perform a similar computation to \eqref{Phiphi-1} with $ \Phi \phi_R $ replaced by $ \psi_{R} $ as in \eqref{m-def-psiR}. This leads to
\begin{equation*}\label{m32-tr}
		\begin{aligned}
		\frac{d}{d t} \int_{\mathbb{R}^N} \left| u(t) - v(t) \right| \psi_R \, \rho \,  dx
		\le
		 & \, \int_{\mathbb{R}^N} \left| u(t)^m - v(t)^m \right| \left(\Delta \psi_R\right)  dx \\
		\leq & \, \frac{C}{R^\gamma}  \int_{ B_{R^{\gamma/2}}(z_R)  } \left| u(t)^m - v(t)^m \right| dx \\
		\leq & \,  {C} \left( \left\| u(t) \right\|_{L^\infty\left(\mathbb{R}^N\right)} \vee \left\| v(t) \right\|_{L^\infty\left(\mathbb{R}^N\right)} \right)^{m-1}  \int_{ B_{R^{\gamma/2}}(z_R) } \left| u(t) - v(t) \right| \rho \, dx \,  .
		\end{aligned}
		\end{equation*}	
Integrating in time as in \eqref{m33-t-10}, we end up with
				\begin{equation*}\label{m33-t-100}
				\begin{aligned}
& \, \int_{ B_{\frac{1}{2} {R^{\gamma / 2}} } (z_R) } \left| u(t)-v(t) \right| \rho \, dx  \\
\leq & \,  \int_{ B_{R^{\gamma / 2}(z_R) } } \left| u_0-v_0 \right| \rho \, dx \\
		   & +  C \int_0^t \left( \left\| u(s) \right\|_{L^\infty\left(\mathbb{R}^N\right)} \vee \left\| v(s) \right\|_{L^\infty\left(\mathbb{R}^N\right)} \right)^{m-1}  \left( \int_{B_{R^{\gamma/2}}(z_R)} \left| u(s)-v(s) \right| \rho \, dx \right) ds \, .
		\end{aligned}
		\end{equation*}
By repeating the same computations with $ \psi_R $ replaced by $ \phi_{R_\gamma} $ (recall that $ R_\gamma $ is defined in \eqref{est-R-gamma}),  multiplying by $ R^{-\gamma(N-2)/2} $, taking suprema over $R \ge  R_\gamma$ and summing the two estimates, we can infer the analogue of \eqref{m34-v}, that is
				\begin{equation*}\label{m340-v}
g(t) \leq {C} \left[ g(0) + \int_0^t \left( \left\| u(s) \right\|_{L^\infty\left(\mathbb{R}^N\right)} \vee \left\| v(s) \right\|_{L^\infty\left(\mathbb{R}^N\right)} \right)^{m-1}  g(s) \, ds \right] \qquad \forall t> 0 \, ,
\end{equation*}		
		with $g(t):=\left\| u(t)-v(t) \right\|_{0,\rho} $. Estimate \eqref{stability-C-adm-bis} is then a simple consequence of the smoothing effect \eqref{p-transl} applied to both $u$ and $v$ and the integral version of Gr\"onwall's inequality (note that $ g(t) $ is continuous thanks to the $ L^1(\mathbb{R}^N,\rho) $ continuity of approximate solutions).
	\end{proof}	
	
	\section{Asymptotic convergence}\label{unique}
	
	Our first goal in this section is to prove Theorem \ref{main thm}. Before this, however, we need to establish a crucial convergence property of the solutions of
		\begin{equation}\label{wpme-resc-u0}
	\begin{cases}
	\rho_k \, \partial_t v_k = \Delta \!\left(v_k^m\right) & \text{in } \mathbb{R}^N \times (0 , + \infty)\, , \\
	v_k = u_0 & \text{on } \mathbb{R}^N\times \{0\} \, ,
	\end{cases}
	\end{equation}
	 towards the solution of
	 		\begin{equation}\label{wpme-resc-u1}
	\begin{cases}
	c \, |x|^{-\gamma} \, u_t = \Delta \!\left(u^m\right) & \text{in } \mathbb{R}^N \times (0 , + \infty)\, , \\
	u = u_0 & \text{on } \mathbb{R}^N\times \{0\} \, ,
	\end{cases}
	\end{equation}
at least for a ``good'' class of nonnegative initial data. We recall that here, and in the sequel, it is understood that $ \rho_k $ is the rescaled density \eqref{rescaled-weight} satisfying \eqref{weight-cond2}, for a fixed but arbitrary sequence $ \xi_k \to + \infty $. Furthermore, solutions to \eqref{wpme-resc-u0} or \eqref{wpme-resc-u1} are meant in the weak energy sense, according to \cite[Definition 3.1]{MP}.
	
	\begin{lem}\label{h1 compact}
		Let $ u_0 \in L^1\!\left( \mathbb{R}^N , |x|^{-\gamma} \right) \cap L^\infty\!\left( \mathbb{R}^N \right) $, with $ u_0 \ge 0 $. Let $ v_k $ and $ u $ be the corresponding solutions of \eqref{wpme-resc-u0} and \eqref{wpme-resc-u1}, respectively. Then
		\begin{equation}\label{L1-conv-cont}
	\lim_{k \to \infty} v_k	= u \quad \text{in } C\!\left( [0,T] ; L^p\!\left( \mathbb{R}^N , |x|^{-\gamma} \right) \right)  \qquad \forall T>0 \, , \ \forall p \in [1,\infty)  \, .
		\end{equation}
	\end{lem}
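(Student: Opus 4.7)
The plan is the classical compactness-and-uniqueness scheme: establish uniform-in-$k$ a priori bounds for $\{v_k\}$ that are strong enough to extract a subsequence converging in the required topology, pass to the limit in the weak formulation of \eqref{wpme-resc-u0}, and identify the limit as $u$ via uniqueness for \eqref{wpme-resc-u1}; since the limit is unique, the whole sequence converges. The extra work is to upgrade local $L^p_{\mathrm{loc}}$-type convergence to convergence in $C([0,T];L^p(\mathbb{R}^N,|x|^{-\gamma}))$.

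First I would collect the uniform a priori bounds. Since $\rho_k$ satisfies \eqref{weight-cond-scaled} with the same constants $\underline{C},\overline{C}$ for all $k$ large, comparison with constants yields $\|v_k(t)\|_{L^\infty(\mathbb{R}^N)}\le \|u_0\|_{L^\infty(\mathbb{R}^N)}$, while mass conservation and $\rho_k\le \overline{C}|x|^{-\gamma}$ give $\|v_k(t)\|_{L^1(\mathbb{R}^N,\rho_k)}\le \overline{C}\|u_0\|_{L^1(\mathbb{R}^N,|x|^{-\gamma})}$ uniformly in $k$ and $t$. Testing the equation by $v_k^m$ against suitable cutoffs then provides a uniform energy bound $\int_0^T\!\!\int_{B_R}|\nabla v_k^m|^2\,dxdt\le C(R,T)$; combining this with the control on $\partial_t v_k$ obtained from the equation (as an element of a negative-order space) and invoking an Aubin--Lions-type argument, I extract a subsequence (not relabeled) such that $v_k\to v$ strongly in $L^p_{\mathrm{loc}}(\mathbb{R}^N\times(0,T))$ and a.e., and $\nabla v_k^m\rightharpoonup \nabla v^m$ weakly in $L^2_{\mathrm{loc}}$.

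Next I pass to the limit in the weak formulation. For the nonlinear term this follows from the local strong convergence and the uniform $L^\infty$ bound. For the time term, the hypothesis \eqref{weight-cond2} gives $\rho_k \to c|x|^{-\gamma}$ in $L^1_{\mathrm{loc}}$, so for any test function $\phi\in C_c^\infty$ the integrals $\int\!\!\int \rho_k v_k \phi_t\,dx dt$ converge to $\int\!\!\int c|x|^{-\gamma}\,v\,\phi_t\,dx dt$ by dominated convergence combined with the a.e./strong convergence of $v_k$. The initial condition $v(0)=u_0$ is preserved via the time-equicontinuity implied by the energy and $L^1$ bounds. Hence $v$ is a weak energy solution of \eqref{wpme-resc-u1}, and since such solutions with $L^1(|x|^{-\gamma})\cap L^\infty$ initial data are unique (by the $L^1$-contraction theory from \cite{MP}), the entire sequence converges: $v_k\to u$.

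The last paragraph upgrades this to the target topology. For the tail at spatial infinity, \eqref{weight-cond-scaled} gives $|x|^{-\gamma}\le C\rho_k(x)$ for $|x|\ge 1$, so a cutoff argument on the equation (differentiate $\int v_k\,\eta_R\,\rho_k\,dx$ in time and use the energy bound) shows $\int_{|x|>R}|v_k(t)-u(t)|\,|x|^{-\gamma}\,dx\to 0$ as $R\to\infty$ uniformly in $k$ and in $t\in[0,T]$; near the origin the $L^\infty$ bound times the finite mass $\int_{|x|<1}|x|^{-\gamma}\,dx$ does the job. Combined with local convergence this gives $v_k(t)\to u(t)$ in $L^1(\mathbb{R}^N,|x|^{-\gamma})$ pointwise in $t$, and interpolation with the uniform $L^\infty$ bound extends this to every $p\in[1,\infty)$. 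Finally, a uniform-in-$k$ modulus of continuity in time follows from applying Proposition \ref{cauchy-adm-0norm} (or directly the $L^1$-contraction of \eqref{wpme-resc-u0}) together with the just-obtained tail estimate, which via Arzel\`a--Ascoli promotes pointwise-in-$t$ convergence to convergence in $C([0,T];L^p(\mathbb{R}^N,|x|^{-\gamma}))$. The most delicate step will be this last one: obtaining the tail smallness \emph{uniformly in both $k$ and $t\in[0,T]$} despite the fact that $\rho_k$ and $|x|^{-\gamma}$ need not be comparable near the origin and that the ``mass'' with respect to the two weights is not conserved along $k$.
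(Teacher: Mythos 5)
Your skeleton (uniform bounds, compactness, identification of the limit via uniqueness, tail control, upgrade to uniform-in-time convergence) matches the paper's, and your direct tail estimate obtained by differentiating $\int v_k\,\eta_R\,\rho_k\,dx$ is fine (the paper instead argues by contradiction using mass conservation). The genuine gap is exactly at the step you dispatch in one sentence: the uniform-in-$k$ equicontinuity of $t\mapsto v_k(t)$ on $[0,T]$ \emph{including} $t=0$. The $L^1(\rho_k)$-contraction (and likewise Proposition \ref{cauchy-adm-0norm}) only gives $\left\| v_k(t+h)-v_k(t)\right\|_{L^1(\mathbb{R}^N,\rho_k)}\le \left\| v_k(h)-u_0\right\|_{L^1(\mathbb{R}^N,\rho_k)}$, so your ``uniform modulus of continuity'' reduces precisely to proving $\sup_k\left\| v_k(h)-u_0\right\|_{L^1(\mathbb{R}^N,\rho_k)}\to 0$ as $h\to 0^+$, for which you give no argument: the tail estimate controls large $|x|$ only, the $\|\cdot\|_{0,\rho_k}$ stability of Proposition \ref{cauchy-adm-0norm} is far too weak to control the global $L^1$ norm, and all the smoothing/energy estimates (e.g. the $C_m/t$ bound behind the interior equicontinuity) degenerate as $t\to 0$. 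This is exactly the part to which the paper devotes the entire second half of its proof: for any $t_k\to0$ it tests the weak formulation to get $\bigl|\int(v_k(t_k)-u_0)\,\psi\,|x|^{-\gamma}dx\bigr|\le C\,t_k+C\left\| \rho_k-c|x|^{-\gamma}\right\|_{L^1(B_{R_0})}$, hence weak convergence $v_k(t_k)\rightharpoonup u_0$ in $L^p(\mathbb{R}^N,|x|^{-\gamma})$, and then combines a tail argument with the decay of the $L^p(\rho_k)$ norms and $\rho_k\to c|x|^{-\gamma}$ to get convergence of norms and thus strong convergence. Without an argument of this type (or, say, an approximation of $u_0$ by data with $\Delta u_0^m\in L^1(dx)$ plus the uniform-in-$k$ semigroup estimate $\left\| v_k(h)-u_0\right\|_{L^1(\rho_k)}\le h\left\| \Delta u_0^m\right\|_{L^1(dx)}$), your proof only yields convergence in $C([T_1,T_2];L^p)$ for $T_1>0$, not the claimed $C([0,T];L^p)$.

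A secondary, fixable looseness: you invoke Aubin--Lions using ``control on $\partial_t v_k$ in a negative-order space'', but $\partial_t v_k=\rho_k^{-1}\Delta(v_k^m)$ with $\rho_k$ merely measurable, so you cannot integrate by parts across $\rho_k^{-1}$; what lies in $L^2_t H^{-1}_{\mathrm{loc}}$ is $\partial_t(\rho_k v_k)$, whereas your spatial compactness is for $v_k^m$, so the hypotheses of the lemma are mismatched as stated. The paper sidesteps this by using the energy estimate on $\partial_t\bigl(v_k^{(m+1)/2}\bigr)$ in $L^2\bigl((t,\infty);L^2(\rho_k)\bigr)$, which (with the uniform $L^\infty$ bound) bounds $\partial_t(v_k^m)$ in $L^2_{\mathrm{loc}}\bigl((0,\infty);L^2_{\mathrm{loc}}\bigr)$ and allows Aubin--Lions to be applied directly to $v_k^m$; the same estimate is what furnishes the interior-in-time equicontinuity.
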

The very technical proof of this lemma is postponed until Appendix \ref{technical}. We are now in position to prove our main result regarding global weighted $L^p$ convergence for $p\in[1,\infty)$, namely Theorem \ref{main thm}.

\begin{proof}[Proof of Theorem \ref{main thm} and Corollary \ref{main thm cor}]
For a fixed $ n \in \mathbb{N} $, let us call $ u_{k,n} $ the solution of problem \eqref{wpme-resc-u0} taking the truncated initial datum
	$$
  \eta_n   := \left[ \left( b \left| x \right|^{-\alpha} \right) \wedge n \right] \chi_{B_n} \, ,
	$$
	which, of course, is nonnegative and belongs to $ L^1\!\left( \mathbb{R}^N , |x|^{-\gamma} \right) \cap L^\infty\!\left( \mathbb{R}^N \right) $, and $ v_n $ the solution of \eqref{wpme-resc-u1} taking the same datum. Also, we continue to let $ u_k $ denote the solution of the rescaled problem \eqref{wpme-resc} and $ \mathcal{U}_\alpha $ the self-similar solution of \eqref{singularWPME}.
	
	Our goal is to prove that $ u_k \to \mathcal{U}_\alpha $ as $ k \to \infty $ for a fixed time; we will explain at the end of the proof how this is equivalent to the thesis. To begin with, we write
		\begin{equation}\label{main-ineq-1-try-1}
	\left\| u_k(t_0)- \mathcal{U}_\alpha(t_0) \right\|_{L^1\left( \mathbb{R}^N , \Phi \left| x \right|^{-\gamma} \right)} = \left\| u_k(t_0)- \mathcal{U}_\alpha(t_0) \right\|_{L^1\left( B_\varepsilon , \Phi \left| x \right|^{-\gamma} \right)} + \left\| u_k(t_0)- \mathcal{U}_\alpha(t_0) \right\|_{L^1\left(B_\varepsilon^c , \Phi \left| x \right|^{-\gamma} \right)} .
		\end{equation}
Let us treat the two terms on the right-hand side separately. Thanks to \eqref{weight-cond-scaled} and Proposition \ref{p-transl}, for every $ t_0>0 $ and $ \varepsilon>0 $ we have
			\begin{equation}\label{main-ineq-1-try-2}	
			\begin{aligned}
& \left\| u_k(t_0)- \mathcal{U}_\alpha(t_0) \right\|_{L^1\left( B_\varepsilon , \Phi \left| x \right|^{-\gamma} \right)} \\
\le  &\left( \left\| u_k(t_0) \right\|_{L^\infty\left( \mathbb{R}^N \right)} \vee \left\| \, \mathcal{U}_\alpha(t_0)  \right\|_{L^\infty\left( \mathbb{R}^N \right)} \right) \left\| \Phi \right\|_{L^\infty\left( \mathbb{R}^N \right)} \tfrac{\omega_{N-1}}{N-\gamma} \, \varepsilon^{N-\gamma} \\
 \le & \left\{ \left[ C_2 \left(  t_0^{-\lambda} \left\| u_{0k} \right\|_{0,\rho_k}^{\theta\lambda} + \left\| u_{0k}  \right\|_{0,\rho_k} \right) \right] \vee \left\| \, \mathcal{U}_\alpha(t_0)  \right\|_{L^\infty\left( \mathbb{R}^N \right)} \right\} \left\| \Phi \right\|_{L^\infty\left( \mathbb{R}^N \right)} \tfrac{\omega_{N-1}}{N-\gamma} \, \varepsilon^{N-\gamma}
\end{aligned}
		\end{equation}	
	and
				\begin{equation}\label{main-ineq-1-try-3}	
			\begin{aligned}
			& \left\| u_k(t_0)- \mathcal{U}_\alpha(t_0) \right\|_{L^1\left(B_\varepsilon^c , \Phi \left| x \right|^{-\gamma} \right)} \\
			 \le & \left\| u_k(t_0)- u_{k,n}(t_0) \right\|_{L^1\left(B_\varepsilon^c , \Phi \left| x \right|^{-\gamma} \right)} + \left\| u_{k,n}(t_0) - v_n(t_0) \right\|_{L^1\left(B_\varepsilon^c , \Phi \left| x \right|^{-\gamma} \right)} \\
	& \, +  \left\| v_n(t_0)- \mathcal{U}_\alpha(t_0) \right\|_{L^1\left(B_\varepsilon^c , \Phi \left| x \right|^{-\gamma} \right)}  \\
\le 	& \, \frac{\left( 1+\varepsilon \right)^\gamma}{\underline{C} \, \varepsilon^\gamma} \left\|  u_k(t_0)- u_{k,n}(t_0)  \right\|_{L^1\left(B_\varepsilon^c , \Phi \rho_k \right)} + \left\| \Phi \right\|_{L^\infty\left( \mathbb{R}^N \right)}  \left\| u_{k,n}(t_0) - v_n(t_0) \right\|_{L^1\left(B_\varepsilon^c , \left| x \right|^{-\gamma} \right)} \\
	& \, +  \left\| v_n(t_0)- \mathcal{U}_\alpha(t_0) \right\|_{L^1\left(B_\varepsilon^c , \Phi \left| x \right|^{-\gamma} \right)} ,
			\end{aligned}
		\end{equation}	
where $ \omega_{N-1} $ stands for the total hypersurface of the $(N-1)$-dimensional unitary sphere. By virtue of Proposition \ref{cauchy-adm} and $ 0 \le \eta_n \le b|x|^{-\alpha}  $, we infer
		\begin{equation}\label{main-ineq-2}
		\begin{aligned}
	 & \left\|  u_k(t_0)- u_{k,n}(t_0)  \right\|_{L^1\left(B_\varepsilon^c , \Phi \rho_k \right)} \\
	  \le & \, e^{C_3 \left[ t_0^{\theta\lambda} \left( \left\| u_{0k} \right\|_{0,\rho_k} \vee \left\| \eta_n \right\|_{0,\rho_k} \right)^{\theta \lambda (m-1)} + t_0 \left( \left\| u_{0k} \right\|_{0,\rho_k} \vee \left\| \eta_n \right\|_{0,\rho_k} \right)^{m-1} \right] } \left\| u_{0k} - \eta_n \right\|_{L^1\left( \mathbb{R}^N , \Phi \rho_k \right)}
	 \end{aligned}
		\end{equation}
and
		\begin{equation}\label{main-ineq-3}
		\begin{aligned}
	\left\| v_n(t_0)- \mathcal{U}_\alpha(t_0) \right\|_{L^1\left(B_\varepsilon^c , \Phi \left| x \right|^{-\gamma} \right)} \le  \, e^{C_3 \left( t_0^{\theta\lambda} \left\| b \left| x \right|^{-\alpha} \right\|_{0,c \left| x \right|^{-\gamma}}^{\theta \lambda (m-1)} + t_0 \left\| b \left| x \right|^{-\alpha} \right\|_{0,c \left| x \right|^{-\gamma}}^{m-1} \right) } \left\| \eta_n - b \left| x \right|^{-\alpha} \right\|_{L^1\left( \mathbb{R}^N , \Phi |x|^{-\gamma} \right)} ,
	 \end{aligned}
		\end{equation}
where in the latter estimate the constant $C_3$ depends on $c$ in place of $ \underline{C},\overline{C} $. From \eqref{weight-cond-scaled} and the definition of $ \eta_n $, it is clear that
		$$
	\sup_{k,n \in \mathbb{N}} \left\| \eta_n \right\|_{0,\rho_k}	< + \infty \qquad \text{and} \qquad \sup_{n \in \mathbb{N}} \left\| \eta_n \right\|_{0,c \left| x \right|^{-\gamma}} 	< + \infty \, ,
		$$
		whereas $ \sup_{k \in \mathbb{N}} \left\| u_{0k} \right\|_{0,\rho_k} < +\infty $ by assumption. As a result, in view of \eqref{main-ineq-1-try-1}--\eqref{main-ineq-3} we can assert that there exists a constant $ \mathsf{C}>0 $, independent of $ k,n,\varepsilon $, such that
\begin{equation*}\label{main-ineq-4}
\begin{aligned}
 & \left\| u_k(t_0)- \mathcal{U}_\alpha(t_0) \right\|_{L^1\left( \mathbb{R}^N , \Phi \left| x \right|^{-\gamma} \right)} \\
 \le & \, \mathsf{C} \left[ \varepsilon^{N-\gamma} + \frac{\left( 1+\varepsilon \right)^\gamma}{\varepsilon^\gamma} \left\| u_{0k} - \eta_n \right\|_{L^1\left( \mathbb{R}^N , \Phi \rho_k \right)} + \left\| u_{k,n}(t_0) - v_n(t_0) \right\|_{L^1\left(\mathbb{R}^N , \left| x \right|^{-\gamma} \right)} \right. \\
& \left. \quad \ \  + \left\| \eta_n - b \left| x \right|^{-\alpha} \right\|_{L^1\left( \mathbb{R}^N , \Phi |x|^{-\gamma} \right)}^{\phantom{A^{A^A}}} \right] .
\end{aligned}
\end{equation*}
By letting first $ k \to \infty $, and taking advantage of \eqref{conv-resc-datum} and Lemma \ref{h1 compact}, we end up with
\begin{equation*}\label{main-ineq-5}
\begin{aligned}
 \limsup_{k \to \infty} \left\| u_k(t_0)- \mathcal{U}_\alpha(t_0) \right\|_{L^1\left( \mathbb{R}^N , \Phi \left| x \right|^{-\gamma} \right)} \le  \mathsf{C} \left[ \varepsilon^{N-\gamma} + \left( \frac{\left( 1+\varepsilon \right)^\gamma}{\varepsilon^\gamma} \, c + 1 \right) \left\| \eta_n - b \left| x \right|^{-\alpha} \right\|_{L^1\left( \mathbb{R}^N , \Phi |x|^{-\gamma} \right)} \right]  ,
\end{aligned}
\end{equation*}
whence
\begin{equation}\label{main-ineq-6}
\lim_{k \to \infty} \left\| u_k(t_0)- \mathcal{U}_\alpha(t_0) \right\|_{L^1\left( \mathbb{R}^N , \Phi \left| x \right|^{-\gamma} \right)}  = 0
\end{equation}
upon letting $ n \to \infty $ (note that $ \eta_n \to b \left| x \right|^{-\alpha} $ monotonically and \eqref{A3} holds) and finally $ \varepsilon \to 0 $. Because the sequence $ \left\{ u_k(t_0)- \mathcal{U}_\alpha(t_0) \right\}_k $ is bounded in $ L^\infty\!\left( \mathbb{R}^N \right) $, from \eqref{main-ineq-6} we immediately deduce
\begin{equation}\label{main-ineq-7}
\lim_{k \to \infty} \left\| u_k(t_0)- \mathcal{U}_\alpha(t_0) \right\|_{L^p\left( \mathbb{R}^N , \Phi \left| x \right|^{-\gamma} \right)}  = 0 \qquad \forall p \in [1,\infty) \, .
\end{equation}
Now we simply observe that with the choice $ t_0=1 $ we have $ \mathcal{U}_\alpha(x,1) = g_\alpha(|x|) $ and
$$
u_k(x,1) =  \xi^\alpha_k \, u\!\left(\xi_k x , \xi_k^{\alpha(m-1)+2-\gamma}  \right) ,
$$
therefore \eqref{main thm eq} follows upon choosing $ t_0 =1 $ and $ \xi_k = \zeta_k^{\lambda_\alpha / \alpha} $ in \eqref{main-ineq-7}, for an arbitrary sequence $ \zeta_k \to +\infty $. As for \eqref{main thm eq_bis}, it is a direct consequence of \eqref{main thm eq} via the change of variables $ y = t^{{\lambda_\alpha}/{\alpha}} x $, recalling that $ \mathcal{U}_\alpha $ satisfies the self-similar identity \eqref{scale invar}.
	\end{proof}

{Now we prepare to prove our uniform convergence result, Theorem \ref{thm-unif-conv}. To begin with, we prove convergence of $u_k$ to $\mathcal{U}_\alpha$ at a fixed time with respect to the norm $ \| \cdot \|_{0,|x|^{-\gamma}} $. We call this Claim 1, in which the key condition \eqref{second-req-u0-re} is crucially used. We state it first because, in particular, it allows us to identify the limit of $ \{ u_k \} $ as  $ \mathcal{U}_\alpha $ in subsequent compactness arguments. The following part of the proof mainly exploits the strategy of \cite[Section 5]{RV2}. A further step, which we call Claim 2, is to show that our rescaled solutions $\{u_k\}$ uniformly satisfy H\"{o}lder estimates in compact sets \emph{away from the origin}, so they uniformly converge in such sets. This is where we use the extra condition \eqref{rho-extra} on the weight. Next, in Claim 3 and Claim 4, we employ subtle local barrier arguments to extend such uniform convergence up to the origin. To handle tail convergence, our proof diverges strongly from that of \cite{RV2}. To conclude, we once again carefully apply Claim 1 and some \emph{a priori} estimates proven in Section \ref{exist} to show that the family $\{u_k\}$ is uniformly small far away from the origin.}

{Before we begin, let us state the following key lemma, which will be used in Claims 3 and 4, and whose proof is postponed until Appendix \ref{technical}.}

\begin{lem}\label{lowbar-lemma-lemma}
Given $   \mathcal{C}>0 $,  $ \ell  <   \mathcal{C}  $ (resp.~$ \ell \ge  \mathcal{C}  $), consider the solution $v$ to the nonhomogeneous Cauchy-Dirichlet problem
\begin{equation}\label{lowbar-lemma}
\begin{cases}
	\overline{C}\left|x\right|^{-\gamma} v_t = \Delta\!\left( v^m \right) & \text{in } B_1 \times(0,+\infty) \, , \\
	v = \mathcal{C} & \text{on } \partial B_1 \times(0,+\infty) \, ,\\
	v = \ell & \text{on } B_1 \times \{ 0 \} \, , \\
\end{cases}
\end{equation}
where $ \overline{C} $ is the upper constant appearing in \eqref{weight-cond}. Then $ v_t \ge 0 $ (resp.~$ v_t \le 0  $) and
\begin{equation}\label{conv-unif-v}
\lim_{t \to +\infty}\left\| v(t) - \mathcal{C} \right\|_{L^\infty\left( B_1 \right)} = 0 \, .
\end{equation}
\end{lem}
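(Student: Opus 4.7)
The plan is to first establish the time monotonicity of $v$ by a standard comparison argument, then identify the pointwise long-time limit as the unique stationary solution $\mathcal{C}$, and finally upgrade this to the $L^\infty$ convergence stated in \eqref{conv-unif-v} via Dini's theorem.

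\emph{Monotonicity.} The constant $\ell$ is trivially a (stationary) solution of the equation in \eqref{lowbar-lemma} and satisfies $\ell\le\mathcal{C}$ (resp.\ $\ell\ge\mathcal{C}$) on $\partial B_1$, so the comparison principle for the weighted Cauchy-Dirichlet problem on $B_1$ gives $v(x,t)\ge\ell$ (resp.\ $v(x,t)\le\ell$) in $B_1\times(0,+\infty)$. In particular $v(x,h)\ge v(x,0)=\ell$ (resp.\ $\le$) for any $h>0$. Applying the comparison principle once more to the pair $v(x,t)$ and $v(x,t+h)$, which both solve \eqref{lowbar-lemma} with the common boundary datum $\mathcal{C}$, yields $v(x,t+h)\ge v(x,t)$ (resp.\ $\le$) for all $(x,t)\in B_1\times[0,+\infty)$, i.e.\ $v_t\ge 0$ (resp.\ $v_t\le 0$).

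\emph{Pointwise limit and its identification.} By monotonicity together with the a priori bound $\min(\ell,\mathcal{C})\le v\le\max(\ell,\mathcal{C})$, the pointwise limit $v_\infty(x):=\lim_{t\to+\infty}v(x,t)$ exists for every $x\in B_1$, and by dominated convergence $v(t)\to v_\infty$ in $L^1(B_1,|x|^{-\gamma})$. To identify $v_\infty$, I would consider the time translates $v^{(n)}(x,t):=v(x,t+n)$: each solves \eqref{lowbar-lemma} with initial datum $v(\cdot,n)\to v_\infty$, so by continuous dependence for the weighted Cauchy-Dirichlet problem $v^{(n)}\to V$, the solution with initial datum $v_\infty$. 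Since on the other hand $v^{(n)}(x,t)\to v_\infty(x)$ pointwise for every fixed $t$, we must have $V\equiv v_\infty$, so $v_\infty$ is a stationary solution. Thus $v_\infty^m$ is harmonic on $B_1$ with constant boundary value $\mathcal{C}^m$, and uniqueness of the harmonic Dirichlet problem forces $v_\infty\equiv\mathcal{C}$.

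\emph{Uniform convergence.} The monotone convergence of $v(\cdot,t)$ to the constant $\mathcal{C}$, combined with continuity of $v(\cdot,t)$ on $\overline{B_1}$, yields \eqref{conv-unif-v} by Dini's theorem. The main obstacle lies precisely in this boundary continuity. In the case $\ell\ge\mathcal{C}$ it is immediate since then $v\ge\mathcal{C}>0$ and the equation is uniformly parabolic, so classical boundary regularity applies. In the case $\ell<\mathcal{C}$ (which allows $\ell\le 0$, making the equation degenerate for small times), one exploits the time monotonicity just proved together with a local barrier at $\partial B_1$ to show that $v$ becomes uniformly bounded away from zero in a neighborhood of the boundary after a finite waiting time, after which standard boundary regularity for quasilinear parabolic equations again applies.
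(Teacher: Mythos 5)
Your monotonicity step is correct and is essentially a time-shift comparison variant of the paper's argument (the paper instead applies the maximum principle to the linearized problem solved by $v_t$, justified by approximation); either route works once a comparison principle for the weighted Cauchy--Dirichlet problem is granted. The identification of the long-time limit as the stationary state is also salvageable via the $L^1\!\left(B_1,|x|^{-\gamma}\right)$-contraction of the Dirichlet semigroup, although you should specify in which sense the stationary limit attains the boundary value (e.g.\ $v_\infty^m-\mathcal{C}^m\in H^1_0(B_1)$) before invoking uniqueness for the harmonic Dirichlet problem.

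The genuine gap is in the final step. Dini's theorem requires $v(\cdot,t)$ to be continuous on the closed ball $\overline{B_1}$, and the delicate point is not the lateral boundary (your waiting-time/barrier remark is reasonable there) but the center $x=0$, where the weight $\overline{C}\,|x|^{-\gamma}$ is singular and the equation degenerates: for $\gamma\in(0,2)$ no \emph{a priori} continuity or H\"older estimate down to the origin is available in this framework --- the paper says so explicitly, and the whole barrier construction in the proof of Theorem \ref{thm-unif-conv} (which is exactly where this lemma is used) exists to circumvent the lack of such estimates at the origin. Your remark for the case $\ell\ge\mathcal{C}$, that the equation is ``uniformly parabolic'' because $v\ge\mathcal{C}>0$, is also not accurate for $\gamma>0$: even with $v$ bounded away from zero, the equation written as $v_t=\overline{C}^{-1}|x|^{\gamma}\Delta(v^m)$ has a coefficient vanishing at $x=0$. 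So your argument silently assumes a regularity property that is unproved here and would require a separate proof or a precise citation for the singular-weight equation. The paper avoids the issue entirely: it sets $w=\mathcal{C}-v$, which solves a homogeneous Dirichlet problem for a filtration-type equation, and runs a Moser iteration based on the weighted Sobolev inequality, yielding the quantitative bound $\|w(t)\|_{L^{\infty}(B_1)}\le D\,t^{-\frac{N-\gamma}{2(2-\gamma)}}\,\|w(0)\|_{L^{2}\left(B_1,|x|^{-\gamma}\right)}$, which gives \eqref{conv-unif-v} (with a rate) without any continuity of $v$. To repair your proof you would either have to establish continuity of $v(\cdot,t)$ up to $x=0$ for the singular weight, or replace the Dini argument by a quantitative smoothing estimate of this kind.
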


\begin{proof}[Proof of Theorem \ref{thm-unif-conv}]

We will split the proof into several claims. Throughout, we tacitly exploit the fact that \eqref{second-req-u0-re} trivially implies \eqref{second-req-u0}, therefore $   \left\| u_{0k}  \right\|_{0,\rho_k} $ is uniformly bounded with respect to $k$.


\medskip

\noindent{\textbf{Claim 1:}} \emph{For every $t_0>0$, we have}
\begin{equation}\label{conv-0-norm}
	\lim_{k \to \infty} \left\| u_k(t_0) - \mathcal{U}_\alpha(t_0) \right\|_{0,|x|^{-\gamma}} = 0 \, .
\end{equation}
We use the same notation as in the proof of Theorem \ref{main thm}. By proceeding similarly to \eqref{main-ineq-1-try-2}--\eqref{main-ineq-1-try-3}, for every $\varepsilon>0$ we obtain:
\begin{equation}\label{main-ineq-21}
	\begin{aligned}
		\left\| u_k(t_0)- \mathcal{U}_\alpha(t_0) \right\|_{0,|x|^{-\gamma}}
		\le & \left\| \left( u_k(t_0)- \mathcal{U}_\alpha(t_0) \right) \chi_{B_\varepsilon} \right\|_{0,|x|^{-\gamma}} + \left\| \left( u_k(t_0)- \mathcal{U}_\alpha(t_0) \right) \chi_{B^c_\varepsilon} \right\|_{0,|x|^{-\gamma}} \\
		\le &  \left( \left\| u_k(t_0) \right\|_{L^\infty\left( \mathbb{R}^N \right)} \vee \left\| \, \mathcal{U}_\alpha(t_0)  \right\|_{L^\infty\left( \mathbb{R}^N \right)} \right) \tfrac{\omega_{N-1}}{N-\gamma} \, \varepsilon^{N-\gamma} \\
		& \, + \left\| \left( u_k(t_0)- u_{k,n}(t_0) \right) \chi_{B_\varepsilon^c} \right\|_{0,|x|^{-\gamma}}\\
		& \,  + \left\| \left( u_{k,n}(t_0) - v_n(t_0) \right) \chi_{B_\varepsilon^c} \right\|_{0,|x|^{-\gamma}}  +  \left\| v_n(t_0)- \mathcal{U}_\alpha(t_0) \right\|_{0,|x|^{-\gamma}} \\
		\le & \left\{ \left[ C_2 \left(  t_0^{-\lambda} \left\| u_{0k} \right\|_{0,\rho_k}^{\theta\lambda} + \left\| u_{0k}  \right\|_{0,\rho_k} \right) \right] \vee \left\| \, \mathcal{U}_\alpha(t_0)  \right\|_{L^\infty\left( \mathbb{R}^N \right)}  \right\} \tfrac{\omega_{N-1}}{N-\gamma} \, \varepsilon^{N-\gamma} \\	
		& \, + \frac{\left( 1+\varepsilon \right)^\gamma}{\underline{C} \, \varepsilon^\gamma} \left\| \left( u_k(t_0)- u_{k,n}(t_0) \right) \chi_{B_\varepsilon^c} \right\|_{ 0 , \rho_k } \\
		& \, + \left\| \left( u_{k,n}(t_0) - v_n(t_0) \right) \chi_{B_\varepsilon^c} \right\|_{0,|x|^{-\gamma}}  +  \left\| v_n(t_0)- \mathcal{U}_\alpha(t_0) \right\|_{0,|x|^{-\gamma}} .
	\end{aligned}
\end{equation}
Thanks to Proposition \ref{cauchy-adm-0norm}, we infer
\begin{equation}\label{main-ineq-22}
	\begin{aligned}
		\left\|  u_k(t_0)- u_{k,n}(t_0)  \right\|_{0 , \rho_k }
		\le & \, C_4 \, e^{C_5 \left[ t_0^{\theta\lambda} \left( \left\| u_{0k} \right\|_{0,\rho_k} \vee \left\| \eta_n \right\|_{0,\rho_k} \right)^{\theta \lambda (m-1)} + t_0 \left( \left\| u_{0k} \right\|_{0,\rho_k} \vee \left\| \eta_n \right\|_{0,\rho_k} \right)^{m-1} \right] } \left\| u_{0k} - \eta_n \right\|_{0,\rho_k }
	\end{aligned}
\end{equation}
and
\begin{equation}\label{main-ineq-23}
	\begin{aligned}
 \left\| v_n(t_0)- \mathcal{U}_\alpha(t_0) \right\|_{0,|x|^{-\gamma}}
		\le  C_4 \,e^{C_5 \left[ t_0^{\theta\lambda} \left\| b \left| x \right|^{-\alpha} \right\|_{0,c \left| x \right|^{-\gamma}}^{\theta \lambda (m-1)} + t_0 \left\| b \left| x \right|^{-\alpha} \right\|_{0,c \left| x \right|^{-\gamma}}^{m-1} \right] } \left\| \eta_n - b \left| x \right|^{-\alpha} \right\|_{0,|x|^{-\gamma}} ,
	\end{aligned}
\end{equation}
where in the latter estimate the constants $C_4,C_5$ depend on $c$ in place of $ \underline{C},\overline{C} $. In view of \eqref{main-ineq-21}--\eqref{main-ineq-23}, by exploiting the same norm bounds on $ \eta_n $ and $ u_{0k} $ as in the proof of Theorem \ref{main thm}, we can find a constant $ \mathsf{C}>0 $, independent of $ k,n,\varepsilon $, such that
\begin{equation*}\label{main-ineq-24}
	\begin{aligned}
		& \left\| u_k(t_0)- \mathcal{U}_\alpha(t_0) \right\|_{0,|x|^{-\gamma}} \\
		\le & \, \mathsf{C} \left[ \varepsilon^{N-\gamma} + \frac{\left( 1+\varepsilon \right)^\gamma}{\varepsilon^\gamma} \left\| u_{0k} - \eta_n \right\|_{0,\rho_k} + \left\| u_{k,n}(t_0) - v_n(t_0) \right\|_{L^1\left(\mathbb{R}^N , \left| x \right|^{-\gamma} \right)}  + \left\| \eta_n - b \left| x \right|^{-\alpha} \right\|_{0,|x|^{-\gamma}}^{\phantom{A^{A^A}}} \right] ,
	\end{aligned}
\end{equation*}
where we used the straightforward inequality $ \| \cdot \|_{0,|x|^{-\gamma}} \le \| \cdot \|_{L^1\left(\mathbb{R}^N , \left| x \right|^{-\gamma} \right)}  $. By letting first $ k \to \infty $, and taking advantage of \eqref{second-req-u0-re} and Lemma \ref{h1 compact}, we end up with
\begin{equation*}\label{main-ineq-25}
	\begin{aligned}
		& \, \limsup_{k \to \infty} \left\| u_k(t_0)- \mathcal{U}_\alpha(t_0) \right\|_{0,|x|^{-\gamma}} \le \mathsf{C} \left[ \varepsilon^{N-\gamma} + \left( \frac{\left( 1+\varepsilon \right)^\gamma}{\varepsilon^\gamma} \, c + 1 \right) \left\| \eta_n - b \left| x \right|^{-\alpha} \right\|_{0,|x|^{-\gamma}} \right]  ,
	\end{aligned}
\end{equation*}
so that \eqref{conv-0-norm} follows upon letting first $ n \to \infty $ and then $ \varepsilon \to 0 $.

\medskip

\noindent{\textbf{Claim 2:}} \emph{For every $t_0>0$ and every $R>r>0$, we have}
\begin{equation}\label{Dib}
\lim_{k \to \infty} \left\| u_k(t_0) - \mathcal{U}_\alpha(t_0) \right\|_{L^\infty\left( B_{R} \setminus B_r \right)} = 0 \, .
\end{equation}
Such a claim is implied by the more general result that will be independently used in the sequel:
\begin{equation}\label{Dib2}
	u_k \underset{k \to \infty}{\longrightarrow} \mathcal{U}_\alpha \quad \textrm{uniformly on } Q \, ,
 \end{equation}
where $ Q = \Omega \times (t_1,t_2) $ for an arbitrary domain $ \Omega \Subset \left(\R^N\setminus\{0\}\right) $ and $ t_2>t_1>0 $. Indeed, as in \cite[page 500]{RV2}, one may rewrite
\eqref{wpme-resc} as the unweighted degenerate parabolic equation of porous-medium type
\[
\partial_t u_k =  \operatorname{div}\! \left(\rho_k^{-1}\,\nabla(u_k^m)\right)-\nabla\!\left(\rho_k^{-1}\right)\cdot\nabla\!\left(u_k^m\right) .
\]
 In order to apply standard local H\"older estimates for weak solutions according to~\cite[Theorem 1.2]{DiB} \emph{uniformly in $k$}, we require the following structural assumptions to hold:
\[
c_0\leq\rho_k^{-1}\leq c_1 \quad \text{in } Q \, , \qquad \norm{\nabla (\rho_k^{-1})}_{L^\infty(Q)}\leq c_2 \, ,  \qquad \norm{\nabla (u_k^m)}_{L^2(Q)}\leq c_3 \, ,
\]
for suitable constants $ c_i > 0$ that do not depend on $k$.
Recalling that $Q$ stays (compactly) away from the origin, the first two estimates easily follow from  \eqref{weight-cond} and \eqref{rho-extra}. The final estimate follows from Lemma \ref{constr-weak}, Proposition \ref{p-transl}, and \eqref{second-req-u0}. Then \eqref{Dib2} is a consequence of the aforementioned H\"{o}lder estimates, the Ascoli-Arzel\`{a} theorem, and the identification of the limit ensured by Claim 1.

\medskip

\noindent{\textbf{Claim 3:}} \emph{For every $ t_0>0 $ and $\delta>0$, there exist an $\varepsilon>0$ and a $\mathsf{K}>0$ such that}
\begin{equation}\label{loc conv 1}
 u_k(x,t_0)-\mathcal{U}_\alpha(x,t_0)  > -\delta \qquad \forall x \in B_\varepsilon \setminus \{0\} \, , \ \forall k>\mathsf{K} \, .
\end{equation}
To begin, let us fix $\delta>0$ and consider the solution $v_1$ to \eqref{lowbar-lemma} with
\begin{equation*}
	\mathcal{C}=\mathcal{U}_\alpha(0,t_0)-\tfrac 1 2 \delta >0 \, , \qquad \ell = - \sup_{k \in\mathbb{N}} \, \sup_{\frac {t_0}{2} \le  \tau \le t_0 } \norm{u_k(\tau)}_{L^\infty\left(\R^N\right)},
\end{equation*}
for a fixed $ \tfrac{t_0}{2} <\tau<t_0$ that will be chosen below. The finiteness of such a supremum is guaranteed by Proposition \ref{p-transl} and \eqref{second-req-u0}. Next, by Lemma \ref{lowbar-lemma-lemma}, there exists a positive time $T$ such that
\begin{equation}\label{almost limit}
	\mathcal{U}_\alpha(0,t_0)- \tfrac 3 4 \delta \leq v_1 \leq\mathcal{U}_\alpha(0,t_0)- \tfrac 1 2 \delta \qquad \text{in } B_1 \times [T,+\infty)    \, .
\end{equation}
To introduce the $\varepsilon$ present in \eqref{loc conv 1}, we consider the function $v_\varepsilon(x,t)=v_1\!\left(\varepsilon^{-1}x,\varepsilon^{\gamma-2}t \right)$ that solves the rescaled problem
\begin{equation}\label{lowbar-resc}
	\begin{cases}
		\overline{C}\left|x\right|^{-\gamma} v_t = \Delta\!\left( v^m \right) & \text{in } B_\varepsilon \times(0,+\infty) \, , \\
		v = \mathcal{C} & \text{on } \partial B_\varepsilon \times(0,+\infty) \, ,\\
		v = \ell & \text{on } B_\varepsilon \times \{ 0 \} \, . \\
	\end{cases}
\end{equation}
Then from \eqref{almost limit} it clearly follows that
\begin{equation}\label{almost limit resc}
	\mathcal{U}_\alpha(0,t_0)- \tfrac 3 4 \delta \leq v_\varepsilon \leq\mathcal{U}_\alpha(0,t_0)-\tfrac 1 2 \delta  \qquad \text{in } B_\varepsilon \times \left[T\varepsilon^{2-\gamma},+\infty\right)  .
\end{equation}
In particular, we shall choose $\varepsilon$ so small and $ \tau $ close enough to $ t_0 $ that both the following conditions hold:
\begin{align}[left=\empheqlbrace]
	& T\varepsilon^{2-\gamma}+\tau<t_0 \label{eps-small-2} \, ,\\
	&  \left| \, \mathcal{U}_\alpha(x,t) - \mathcal{U}_\alpha(0,t_0) \right| < \tfrac 1 4 \delta \qquad \forall(x,t)\in  B_\varepsilon\times(\tau , t_0)\label{lower-approx} \, ,
\end{align}
where the second inequality just follows by the continuity of $ \mathcal{U}_\alpha $, thanks to Theorem \ref{selfsim-sol}.

Due to the monotonicity property in Lemma \ref{lowbar-lemma-lemma} and \eqref{weight-cond-scaled}, $v_\varepsilon$ is a subsolution to \eqref{lowbar-resc} with $\overline{C}|x|^{-\gamma}$ replaced by $\rho_k$ (i.e.~the equation for $u_k$). Therefore we may apply local comparison (see e.g.~\cite{GMPo} and references therein) on the cylinder $ \overline{B}_\varepsilon\times [\tau,t_0]$, with the function $V_\varepsilon(x,t)=v_\varepsilon(x,t-\tau)$ serving as a uniform lower barrier to $u_k$. To this end, we must compare $u_k$ and $V_\varepsilon$ on the lateral boundary $\partial B_\varepsilon\times(\tau,t_0)$ and on the base $B_\varepsilon\times\{\tau\}$ of the cylinder. First, for boundary comparison, we can exploit~\eqref{Dib2} to conclude that there exists a $\mathsf{K}>0$ such that
\begin{equation}\label{Dib-cons}
	\left| u_k - \mathcal{U}_\alpha \right| \le  \tfrac 1 4 \delta \qquad\textrm{on } \partial B_\varepsilon\times(\tau,t_0) \, , \ \forall k>\mathsf{K} \, .
\end{equation}
 Combining this estimate with \eqref{lower-approx} and recalling the definition of $\mathcal{C}$ gives the conclusion that $u_k \geq V_\varepsilon=\mathcal{C}$ on the lateral boundary. As regards the initial datum, by the definition of $\ell $ we have that $u_k \geq V_\varepsilon = \ell $ on $B_\varepsilon \times \{ \tau \} $. Applying the above mentioned comparison yields
\begin{equation}\label{comparison-res}
	u_k \geq V_\varepsilon  \qquad\textrm{on } \overline{B}_\varepsilon\times[\tau,t_0] \, , \ \forall k>\mathsf{K} \, .
\end{equation}
Combining \eqref{comparison-res} and the leftmost inequality in \eqref{almost limit resc} (also recall \eqref{eps-small-2}), we conclude
\begin{equation}\label{local-lower-bd-cyl}
	u_k\geq \mathcal{U}_\alpha(0,t_0)- \tfrac 3 4 \delta \qquad\textrm{on } \overline{B}_\varepsilon\times  [T\varepsilon^{2-\gamma} + \tau  ,t_0] \, , \ \forall k>\mathsf{K} \, .
\end{equation}
The lower bound \eqref{loc conv 1} follows at once by applying \eqref{local-lower-bd-cyl} at $t=t_0$ and again \eqref{lower-approx}. Note that the origin must be removed from \eqref{loc conv 1} because, in view of Claim 2, we are able to ensure that each $u_k$ is continuous only away from the origin.

\medskip

\noindent{\textbf{Claim 4:}}
 \emph{For every $ t_0>0 $ and $\delta>0$, there exist an $\varepsilon>0$ and a $\mathsf{K}>0$ such that}
\begin{equation}\label{loc conv 2}
 u_k(x,t_0)-\mathcal{U}_\alpha(x,t_0)  < \delta \qquad \forall x \in B_\varepsilon \setminus \{0\} \, , \ \forall k>\mathsf{K} \, .
\end{equation}
The proof of \eqref{loc conv 2} is similar to to that of \eqref{loc conv 1}, so we will only stress the main differences. First of all, we modify the constants $ \mathcal{C},\ell $ as follows:
\begin{equation*}
	\mathcal{C}=\mathcal{U}_\alpha(0,t_0)+\tfrac 1 2 \delta \, , \qquad \ell =  \bigg( \sup_{k \in\mathbb{N}} \, \sup_{\frac {t_0}{2} \le  \tau \le t_0 } \norm{u_k(\tau)}_{L^\infty\left(\R^N\right)} \bigg) \vee \mathcal{C} \, ,
\end{equation*}
whereas $ \tfrac{t_0}{2} <\tau<t_0$ is still a suitable time to be chosen. The function $ v_\varepsilon $ is as before the solution to the rescaled problem \eqref{lowbar-resc} which, by Lemma \ref{lowbar-lemma-lemma}, is now monotone decreasing in time and satisfies
\begin{equation}\label{almost limit bis}
	\mathcal{U}_\alpha(0,t_0) +  \tfrac 1 2 \delta \leq v_\varepsilon \leq\mathcal{U}_\alpha(0,t_0) + \tfrac 3 4 \delta  \qquad \text{in } B_\varepsilon \times \left[T\varepsilon^{2-\gamma},+\infty\right)  .
\end{equation}
Now we choose $ \varepsilon $ and $\tau$ exactly as in \eqref{eps-small-2}--\eqref{lower-approx}. The opposite time monotonicity makes $ v_\varepsilon $ a supersolution to \eqref{lowbar-resc} with $ \overline{C}|x|^{-\gamma} $ replaced by $ \rho_k $, so that the function $ V_\varepsilon $ will actually serve as a uniform upper barrier to $ u_k $ on the same small cylinder. Lateral boundary comparison between $ u_k $ and $ V_\varepsilon $ still holds by virtue of \eqref{lower-approx} and \eqref{Dib-cons}, whereas the base comparison is just a consequence of the new definition of $ \ell $. Hence, by comparison we end up with
\begin{equation*}\label{comparison-res-bis}
	u_k \leq V_\varepsilon  \qquad\textrm{on } \overline{B}_\varepsilon\times[\tau,t_0] \, , \ \forall k>\mathsf{K} \, .
\end{equation*}
Therefore, in view of \eqref{almost limit bis} and \eqref{lower-approx},  the conclusion follows exactly as in Claim 3.

\medskip

\noindent {\textbf{End of proof.}} First of all, thanks to \eqref{Dib}, \eqref{loc conv 1} and \eqref{loc conv 2}, we can conclude that $ \{ u_k(t_0) \}_k $ converges to $ \mathcal{U}_\alpha(t_0) $ locally uniformly, that is
\begin{equation*}\label{Dib-unif}
\lim_{k \to \infty} \left\| u_k(t_0) - \mathcal{U}_\alpha(t_0)  \right\|_{L^\infty\left( B_{R} \right)} = 0 \qquad \forall R>0 \, .
\end{equation*}
In order to prove global uniform convergence, that is
\begin{equation}\label{Dib-unif-bis}
\lim_{k \to \infty} \left\| u_k(t_0) - \mathcal{U}_\alpha(t_0)  \right\|_{L^\infty\left( \mathbb{R}^N \right)} = 0 \, ,
\end{equation}
since $ x \mapsto \mathcal{U}_\alpha(x,t_0) $ vanishes uniformly as $ |x| \to +\infty $, it is enough to show that for every $ \delta>0 $ there exist $ R_\delta>0 $ and $ \mathsf{K}_\delta>0 $ such that
\begin{equation}\label{Dib-unif-outer}
 \left\| u_k(t_0) \right\|_{L^\infty\left( B_{R_\delta}^c \right)} < \delta \qquad \forall k>\mathsf{K}_\delta \, .
\end{equation}
Once \eqref{Dib-unif-bis} is established, formula \eqref{main thm unif} (and so \eqref{main thm unif t}) will follow exactly as at the end of the proof of Theorem \ref{main thm}. To this aim, let us go back to estimate \eqref{M1-ter} applied to $ u \equiv u_k(t_0) $, which reads
	\begin{equation}\label{M1-ter-proof}
	\begin{aligned}
\left\|  u_k(t_0) \right\|_{L^\infty\left( B_{\frac 1 2 R^{\gamma / 2}}(z_R) \right)} \le & \, C \left[ \tfrac{1}{(m-1)t_0} + \left\| u_k(t_0) \right\|_{L^\infty\left(\mathbb{R}^N\right)}^{m-1} \right]^{ \frac{N-\gamma}{(N-\gamma)(m-1)+p_0(2-\gamma)} }   \\
 & \times \left[ \frac{ \int_{ B_{R^{\gamma / 2}}(z_R) } u_k(t_0)  \,  \rho_k \, dx}{{ R^{\frac{\gamma(N-2)}{2}} }  }  \right]^{ \frac{2-\gamma}{(N-\gamma)(m-1)+p_0(2-\gamma)}} \\
& \times \left\| u_k(t_0) \right\|_{L^\infty\left( \mathbb{R}^N \right)}^{\frac{(p_0-1)(2-\gamma)}{(N-\gamma)(m-1)+p_0(2-\gamma)}} ,
\end{aligned}
	\end{equation}
where $ p_0 \ge m+1 $ is any chosen exponent and $C>0$ is a general constant that does not depend on $ k $ and $ R \ge  R_\gamma $, that may vary from line to line. By virtue of Proposition \ref{p-transl}, \eqref{weight-cond-scaled} and  \eqref{second-req-u0-re}, we can rewrite \eqref{M1-ter-proof} as
	\begin{equation}\label{M1-ter-proof-2}
\left\|  u_k(t_0) \right\|_{L^\infty\left( B_{\frac 1 2 R^{\gamma / 2}}(z_R) \right)} \le C \left[ \frac{ \int_{ B_{R^{\gamma / 2}}(z_R) } u_k(t_0)  \, |x|^{-\gamma} \, dx}{{ R^{\frac{\gamma(N-2)}{2}} }  }  \right]^{ \frac{2-\gamma}{(N-\gamma)(m-1)+p_0(2-\gamma)}} .
	\end{equation}
	Note that, rigorously, estimate \eqref{M1-ter-proof-2} holds for solutions that are globally integrable and bounded, but by a standard approximation argument it is not difficult to check that it is still satisfied by constructed solutions.
Thanks to the triangle inequality and the definition of $ \| \cdot \|_{0,|x|^{-\gamma}} $ norm, we may deduce from \eqref{M1-ter-proof-2} and \eqref{u-alpha-2}  the following bound:
	\begin{equation}\label{M1-ter-proof-3}
	\begin{aligned}
& \left\|  u_k(t_0) \right\|_{L^\infty\left( B_{\frac 1 2 R^{\gamma / 2}}(z_R) \right)} \\
\le & \, C \left[ \frac{ \int_{ B_{R^{\gamma / 2}}(z_R) } \mathcal{U}_\alpha(t_0)  \, |x|^{-\gamma} \, dx}{{ R^{\frac{\gamma(N-2)}{2}} }  } + \left\| u_k(t_0) - \mathcal{U}_\alpha(t_0) \right\|_{0,|x|^{-\gamma}}  \right]^{ \frac{2-\gamma}{(N-\gamma)(m-1)+p_0(2-\gamma)}} \\
\le & \,C \left[ \frac{ \int_{ B_{R^{\gamma / 2}}(z_R) } |x|^{-\alpha-\gamma} \, dx}{{ R^{\frac{\gamma(N-2)}{2}} }  } + \left\| u_k(t_0) - \mathcal{U}_\alpha(t_0) \right\|_{0,|x|^{-\gamma}}  \right]^{ \frac{2-\gamma}{(N-\gamma)(m-1)+p_0(2-\gamma)}} \\
\le & \,C \left[ R^{-\alpha} + \left\| u_k(t_0) - \mathcal{U}_\alpha(t_0) \right\|_{0,|x|^{-\gamma}}  \right]^{ \frac{2-\gamma}{(N-\gamma)(m-1)+p_0(2-\gamma)}} ,
\end{aligned}
	\end{equation}
where in the last passage we used the fact that $ |x| $ is comparable to $ R $ in $ B_{R^{\gamma/2}}(z_R) $ for all $R \ge R_\gamma$. Finally, due to \eqref{conv-0-norm}, we can choose $R_\delta$ and $ \mathsf{K}_\delta $ so large that
$$
C \left[ R_\delta^{-\alpha} + \left\| u_k(t_0) - \mathcal{U}_\alpha(t_0) \right\|_{0,|x|^{-\gamma}}  \right]^{ \frac{2-\gamma}{(N-\gamma)(m-1)+p_0(2-\gamma)}} < \delta \qquad \forall k > \mathsf{K}_\delta \, ,
$$
whence \eqref{Dib-unif-outer} follows upon taking the supremum of \eqref{M1-ter-proof-3} over $ R>R_\delta $ and $ z_R \in \partial B_R $.
\end{proof}
\normalcolor

\section{Detailed ODE analysis and self-similarity} \label{ODE}
	We split the proof of well-posedness of \eqref{ode1} over the course of several steps. First, we derive via a spatial dilation the equivalence of \eqref{ode1} and an unweighted ODE problem. Then, using classical methods, we prove existence and uniqueness of local solutions to the latter. By exploiting basic properties of the equation, we can extend local solutions to a unique global solution. Finally, we prove that such a solution has the desired asymptotic decay rate; this will allow us to construct the self-similar solution to \eqref{singularWPME} and prove its main properties claimed in Theorem \ref{selfsim-sol}. Some of the passages below are inspired by the works of Gilding and Peletier in \cite{GP1,GP2} and by Gilding in \cite{G3}, where they study the corresponding $1$-dimensional and unweighted problem.
	
In the sequel, we take for granted that $ \alpha \in (0,N-\gamma) $, and for the sake of better readability we discuss the case $ c=1 $ only, such a constant playing no significant role in the equation.
	
	\subsection{Reduction to an unweighted ODE and main result}
		First of all, we show that \eqref{ode1} can be easily transformed into an analogous problem without the presence of the weight $ r^{-\gamma} $, which will simplify our analysis.
	
\begin{lem}\label{equiv}
	Let $ b>0 $. A positive function $ g \in  C^2((0,+\infty)) \cap C([0,+\infty)) $ is a solution of the weighted ODE problem
		\begin{equation}\label{ode-app-1}
	\begin{cases}
	(g^m)^{''}(r)+\frac{N-1}{r} \, (g^m)^{'}(r)+ r^{-\gamma}\left[\frac{\lambda_\alpha}{\alpha} \, r \, g'(r)+\lambda_\alpha \, g(r)\right]=0 & \text{for } r>0 \, , \\
	g'(r) = o\!\left(r^{-\frac \gamma 2}\right) & \text{as } r \to 0^+ , \\
	\lim_{r \to +\infty}r^{\alpha} \, g(r)=b \, ,
	\end{cases}
	\end{equation}
	if and only if the function
	\begin{equation}\label{def-g-tilde}
	\tilde{g}(r) = g\!\left( r^{\frac{2}{2-\gamma}} \right) \qquad \forall r \ge 0
	\end{equation}
	is a positive $ C^2((0,+\infty)) \cap C^1([0,+\infty))  $ solution of the unweighted ODE problem
			\begin{equation}\label{ode-app-2}
	\begin{cases}
	(\tilde g^m)^{''}(r)+\frac{\tilde N-1}{r} \, (\tilde g^m)^{'}(r)+ \frac{\tilde \lambda}{\tilde \alpha} \, r \, \tilde g'(r)+\tilde \lambda \, \tilde g(r)=0 & \text{for } r>0 \, , \\
	\tilde g'(0) =0 \,  , \\
	\lim_{r \to +\infty}r^{\tilde \alpha} \, \tilde g(r)=b \, ,
	\end{cases}
	\end{equation}
	where
\begin{equation}\label{ode-parameters}
				\Tilde{N}=\frac{2(N-\gamma)}{2-\gamma}>1 \, , \qquad \Tilde{\alpha}=\frac{2\alpha}{2-\gamma} \, , \qquad \tilde{\lambda} = \frac{2 \tilde \alpha}{(2-\gamma)\alpha} \, \lambda_\alpha \, .
\end{equation}
\end{lem}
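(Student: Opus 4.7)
The plan is to verify the equivalence via a direct chain-rule computation. Setting $\phi(r):=r^{2/(2-\gamma)}$, so that $\tilde g=g\circ\phi$ and writing $s=\phi(r)$, I would first record the elementary identities
\[
\phi'(r)=\tfrac{2}{2-\gamma}\,r^{\gamma/(2-\gamma)}, \qquad \phi''(r)=\tfrac{2\gamma}{(2-\gamma)^2}\,r^{(2\gamma-2)/(2-\gamma)}, \qquad r\,\phi'(r)=\tfrac{2}{2-\gamma}\,s,
\]
and then expand $\tilde g'$, $(\tilde g^m)'$, $(\tilde g^m)''$ in terms of derivatives of $g$ and $g^m$ evaluated at $s$. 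The goal is to show that the left-hand side of the unweighted ODE \eqref{ode-app-2} coincides with $\frac{4}{(2-\gamma)^2}\,r^{2\gamma/(2-\gamma)}$ times the left-hand side of the weighted ODE \eqref{ode-app-1}, which, being a strictly positive factor on $(0,+\infty)$, yields pointwise equivalence.

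Term by term: the coefficient of $(g^m)''(s)$ matches because $(\phi'(r))^2=\frac{4}{(2-\gamma)^2}\,r^{2\gamma/(2-\gamma)}$; the coefficient of $(g^m)'(s)$ gets contributions from $\phi''(r)$ and from $\frac{\tilde N-1}{r}\phi'(r)$, whose sum collapses to $\frac{4(N-1)}{(2-\gamma)^2}\,r^{(2\gamma-2)/(2-\gamma)}$ precisely when $\tilde N=\frac{2(N-\gamma)}{2-\gamma}$; and the lower-order terms $\frac{\tilde\lambda}{\tilde\alpha}\,r\,\tilde g'(r)+\tilde\lambda\,\tilde g(r)$ reduce via $r\phi'(r)=\frac{2}{2-\gamma}\,s$ to $\frac{2\tilde\lambda}{(2-\gamma)\tilde\alpha}\,s\,g'(s)+\tilde\lambda\,g(s)$, which matches $\frac{4}{(2-\gamma)^2}\!\left[\tfrac{\lambda_\alpha}{\alpha}\,s\,g'(s)+\lambda_\alpha\,g(s)\right]$ exactly when $\tilde\alpha=\frac{2\alpha}{2-\gamma}$ and $\tilde\lambda=\frac{4\lambda_\alpha}{(2-\gamma)^2}=\frac{2\tilde\alpha}{(2-\gamma)\alpha}\,\lambda_\alpha$, recovering \eqref{ode-parameters}.

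It remains to translate the boundary and regularity conditions. Using $s^{-\gamma/2}=r^{-\gamma/(2-\gamma)}$, the behavior $g'(r)=o(r^{-\gamma/2})$ as $r\to 0^+$ becomes
\[
\tilde g'(r)=\tfrac{2}{2-\gamma}\,r^{\gamma/(2-\gamma)}\,g'(s)=\tfrac{2}{2-\gamma}\,r^{\gamma/(2-\gamma)}\,o\!\left(r^{-\gamma/(2-\gamma)}\right)=o(1),
\]
i.e.~$\tilde g'(0)=0$; reading the same identity backwards yields the converse. Combined with the smoothness of $\phi$ on $(0,+\infty)$ and the continuity of $\tilde g$ at $0$ inherited from $g$, this upgrades $\tilde g$ to $C^2((0,+\infty))\cap C^1([0,+\infty))$. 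The decay condition $r^\alpha g(r)\to b$ transforms into $\tilde r^{\tilde\alpha}\tilde g(\tilde r)\to b$ by substituting $r=\tilde r^{2/(2-\gamma)}$ and invoking $\tilde\alpha=\frac{2\alpha}{2-\gamma}$, while positivity of $g$ versus $\tilde g$ is preserved trivially since $\phi$ is a bijection of $[0,+\infty)$. The entire argument is essentially bookkeeping and no serious obstacle arises; the only mildly delicate step is the equivalence between $g'(r)=o(r^{-\gamma/2})$ and $\tilde g'(0)=0$, and the exponent $2/(2-\gamma)$ in the definition of $\phi$ is in fact chosen precisely so as to make these two conditions equivalent.
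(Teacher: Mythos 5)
Your proposal is correct and follows essentially the same route as the paper: the same change of variables $s=r^{2/(2-\gamma)}$, the same chain-rule identities for $\tilde g'$, $(\tilde g^m)'$, $(\tilde g^m)''$, and the same translation of the boundary conditions, with the parameter identities \eqref{ode-parameters} emerging exactly as you indicate (indeed $\frac{2\gamma}{(2-\gamma)^2}+\frac{2(\tilde N-1)}{2-\gamma}=\frac{4(N-1)}{(2-\gamma)^2}$ and $\tilde\lambda=\frac{4\lambda_\alpha}{(2-\gamma)^2}$ check out). Packaging the computation as ``unweighted left-hand side $=$ positive factor times weighted left-hand side'' is a tidy way to get both implications at once, but it is the same argument as the paper's.
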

\begin{proof}
	From direct calculations, we have:
			\begin{equation*}
			\tilde g'(r)=\left(\frac{2}{2-\gamma}\right)r^{\frac{\gamma}{2-\gamma}}\, g'\!\left(r^{\frac{2}{2-\gamma}}\right), \qquad \left(\tilde g^m\right)\!'(r)=\left(\frac{2}{2-\gamma}\right)r^{\frac{\gamma}{2-\gamma}} \left( g^m\right)\!'\!\left(r^{\frac{2}{2-\gamma}}\right)
		\end{equation*}
		and
		\begin{equation*}
		\left(\tilde g^m\right)\!''(r) = \left(\frac{2}{2-\gamma}\right)^2 r^{\frac{2\gamma}{2-\gamma}} \left({g}^m\right)\!''\!\left(r^{\frac{2}{2-\gamma}}\right) +  \frac{2 \gamma}{(2-\gamma)^2}  \, r^{\frac{2\gamma-2}{2-\gamma}}\left( g^m\right)\!'\!\left(r^{\frac{2}{2-\gamma}}\right) .
		\end{equation*}
Combining these expressions and applying \eqref{ode-parameters}, after some algebraic manipulations one can check that if $ g $ solves the differential equation in \eqref{ode-app-1}, then $ \tilde{g} $ solves the one in \eqref{ode-app-2}. As for the boundary conditions, we have
$$
\lim_{r \to +\infty} r^{\tilde \alpha} \tilde g(r) = \lim_{r \to +\infty} r^{\tilde \alpha} g\!\left( r^{\frac{2}{2-\gamma}} \right) = \lim_{r \to +\infty} r^{\tilde \alpha \, \frac{2-\gamma}{2}}  g\!\left( r \right) =  \lim_{r \to +\infty} r^{\alpha}  g\!\left( r \right) = b
$$
and
$$
\lim_{r \to 0+} \tilde{g}'(r) = \left( \frac{2}{2-\gamma} \right) \lim_{r \to 0+} r^{\frac{\gamma}{2-\gamma}}\, g'\!\left(r^{\frac{2}{2-\gamma}}\right) = \left( \frac{2}{2-\gamma} \right) \lim_{r \to 0+} r^{\frac{\gamma}{2}}\, g'\!\left(r\right) = 0 \, .
$$
Hence, we have shown that a solution of \eqref{ode-app-1} becomes a solution of \eqref{ode-app-2} via the change of variables \eqref{def-g-tilde}. The opposite implication is completely analogous.		
\end{proof}

Before addressing the condition at infinity in \eqref{ode-app-1} or \eqref{ode-app-2}, we notice that looking for a positive solution that has such a prescribed behavior as $ r \to +\infty $ actually amounts to studying a \emph{local} Cauchy problem for a given initial datum.

	\begin{lem}\label{reduct-local}
There exists a positive function $ \tilde{g} \in  C^2((0,+\infty)) \cap C^1([0,+\infty))  $ that solves \eqref{ode-app-2} if and only if there exists a positive function $ G \in  C^2((0,+\infty)) \cap C^1([0,+\infty)) $ that solves
			\begin{equation}\label{ode-app-local}
	\begin{cases}
	(G^m)^{''}(r)+\frac{\tilde N-1}{r} \, ( G^m)^{'}(r)+ \frac{\tilde \lambda}{\tilde \alpha} \, r \, G'(r)+\tilde \lambda \, G(r)=0 & \text{for } r>0 \, , \\
	 G'(0) =0 \,  , \\
	G(0) = 1 \, , \\
	\exists \lim_{r \to +\infty}r^{\tilde \alpha} \, G(r) =: L \in (0,+\infty)  \, ,
	\end{cases}
	\end{equation}
	and in such a case the identity
		\begin{equation}\label{ode-app-id}
	\tilde{g}(r) = \sigma \, G\!\left( \sigma^{-\frac{m-1}{2}} r \right)
	\end{equation}
	holds for a suitable constant $ \sigma>0 $.
	\end{lem}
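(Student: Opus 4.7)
The plan is to exploit the \emph{scaling invariance} of the ODE
$$
(G^m)''(r)+\frac{\tilde N-1}{r}\,(G^m)'(r)+\frac{\tilde \lambda}{\tilde \alpha}\,r\,G'(r)+\tilde \lambda\,G(r)=0,
$$
under transformations of the form $G(r) \mapsto \sigma \, G\!\left(\sigma^{-(m-1)/2} r\right)$ with $\sigma>0$. The very first step is therefore a direct computation: setting $h(r):=\sigma\, G(\sigma^{-(m-1)/2}r)$ and plugging into the equation, one checks that each term picks up a common factor of $\sigma$ which can be divided out, so that $h$ solves the same ODE as $G$. Both boundary conditions $G'(0)=0$ and positivity are obviously preserved, which will make both directions of the equivalence purely algebraic.

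For the forward implication ($\tilde g$ solves \eqref{ode-app-2} $\Rightarrow$ $G$ solves \eqref{ode-app-local}), since $\tilde g$ is positive and continuous at $0$, the value $\sigma:=\tilde g(0)>0$ is well defined. I would then set
$$
G(r):=\sigma^{-1}\,\tilde g\!\left(\sigma^{(m-1)/2}r\right),
$$
which by the scaling invariance above is a $C^2((0,+\infty))\cap C^1([0,+\infty))$ positive solution of the same equation, and by construction $G(0)=1$ and $G'(0)=0$. The only nontrivial point is that the limit at infinity exists and is finite and strictly positive: writing
$$
r^{\tilde \alpha}\,G(r)=\sigma^{-1-\frac{\tilde \alpha(m-1)}{2}}\left(\sigma^{(m-1)/2}r\right)^{\tilde \alpha}\tilde g\!\left(\sigma^{(m-1)/2}r\right),
$$
the limit as $r\to+\infty$ is precisely $L:=\sigma^{-\kappa} b$, where $\kappa:=1+\tfrac{\tilde \alpha(m-1)}{2}>0$.

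The backward implication runs symmetrically: given a positive solution $G$ of \eqref{ode-app-local} with asymptotic limit $L>0$, I would choose $\sigma>0$ as the unique solution of $\sigma^{\kappa}L=b$, namely $\sigma=(b/L)^{1/\kappa}$, and define $\tilde g$ by \eqref{ode-app-id}. Scaling invariance immediately gives that $\tilde g$ solves the differential equation in \eqref{ode-app-2}, positivity and the regularity $C^2((0,+\infty))\cap C^1([0,+\infty))$ are inherited from $G$, the condition $\tilde g'(0)=\sigma^{(3-m)/2}G'(0)=0$ is automatic, and the same change of variables as above yields $\lim_{r\to+\infty}r^{\tilde\alpha}\tilde g(r)=\sigma^{\kappa}L=b$.

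There is no real obstacle here beyond bookkeeping of the scaling exponents; the only point deserving attention is the strict positivity of $\kappa$, which ensures the algebraic equation $\sigma^\kappa L=b$ has a unique positive solution for any prescribed $b,L>0$. Since $m>1$ and $\tilde\alpha>0$ by~\eqref{ode-parameters}, one has $\kappa>1$, so the matching constant $\sigma$ is always well defined and the correspondence $\tilde g\leftrightarrow G$ is in fact a bijection (with inverse given by the same formula with $\sigma$ replaced by $\sigma^{-1}$, up to the appropriate normalization).
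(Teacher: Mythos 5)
Your proposal is correct and follows essentially the same route as the paper: exploit the scaling invariance $G(r)\mapsto\sigma\,G\!\left(\sigma^{-\frac{m-1}{2}}r\right)$, normalize with $\sigma=\tilde g(0)$ in one direction and $\sigma=(b/L)^{1/\kappa}$ (the paper's $(b/L)^{2/(2+(m-1)\tilde\alpha)}$) in the other, with the same bookkeeping of exponents. The only cosmetic difference is that the paper additionally invokes uniqueness of the Cauchy problem \eqref{ode-app-local} to assert that \eqref{ode-app-id} links $\tilde g$ to \emph{the} solution $G$, whereas you obtain the identity directly for the constructed pair, which amounts to the same thing.
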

	\begin{proof}
	If $ h $ is any positive solution of the differential equation in \eqref{ode-app-local} with $ h'(0)=0 $, a straightforward computation shows that, for each $ \sigma>0 $, the rescaled function
	$$
	r \mapsto \sigma \, h\!\left( \sigma^{-\frac{m-1}{2}} r \right)
	$$
	is also a positive solution, with initial condition $ \sigma h(0) $. In particular, if there exists a positive solution $ G $ of \eqref{ode-app-local}, then the function
	$$
	\hat{g}(r) = \sigma \, G\!\left( \sigma^{-\frac{m-1}{2}} r \right)
	$$
	solves
				\begin{equation*}\label{ode-app-local-proof-1}
	\begin{cases}
	(\hat g^m)^{''}(r)+\frac{\tilde N-1}{r} \, ( \hat g^m)^{'}(r)+ \frac{\tilde \lambda}{\tilde \alpha} \, r \, \hat g'(r)+\tilde \lambda \, \hat g(r)=0 & \text{for } r>0 \, , \\
	 \hat g'(0) =0 \,  , \\
	\hat g(0) = \sigma \, , \\
	\lim_{r \to +\infty}r^{\tilde \alpha} \, \hat g(r) = L \, \sigma^{1+\frac{m-1}{2}\tilde{\alpha}} \, .
	\end{cases}
	\end{equation*}
	Therefore, with the choice
	$$
	\sigma = \left( \frac{b}{L} \right)^{\frac{2}{2+(m-1)\tilde{\alpha}}}
	$$
	it becomes a solution of \eqref{ode-app-2}. Similarly, if $ \tilde{g} $ is a positive solution of \eqref{ode-app-2}, in order to obtain a solution of \eqref{ode-app-local} it is enough to rescale it as above with
	$$
	\sigma = \frac{1}{\tilde{g}(0)} \, .
	$$
	Finally, identity \eqref{ode-app-id} must hold in view of the uniqueness for the Cauchy problem \eqref{ode-app-local} (regardless of the condition at infinity),
see also Subsections \ref{EX} and \ref{EG}.
	\end{proof}
	
In view of Lemmas \ref{equiv} and \ref{reduct-local}, in order to prove that \eqref{ode1} is well posed, what follows aims at establishing that the same is true for \eqref{ode-app-local}.
	
	\begin{thm}\label{main-prop-ode}
	There exists a unique positive solution $ G \in  C^2((0,+\infty)) \cap C^1([0,+\infty)) $ of the ODE problem \eqref{ode-app-local}.
	\end{thm}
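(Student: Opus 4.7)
The plan is to organize the proof into four steps: local existence and uniqueness near the origin, global continuation, derivation of an algebraic decay rate at infinity, and identification of the limit $L$ as a positive finite number. First I would recast the ODE in integrated form by multiplying by $r^{\tilde N-1}$, rewriting $\tfrac{\tilde\lambda}{\tilde\alpha}\,r^{\tilde N}G'+\tilde\lambda\,r^{\tilde N-1}G = \tfrac{\tilde\lambda}{\tilde\alpha}(r^{\tilde N}G)'-\tilde\lambda\,\tfrac{\tilde N-\tilde\alpha}{\tilde\alpha}\,r^{\tilde N-1}G$, and integrating from $0$ to $r$ using the initial conditions; this produces the first-order identity
\begin{equation*}
(G^m)'(r)+\frac{\tilde\lambda}{\tilde\alpha}\,r\,G(r)=\tilde\lambda\,\frac{\tilde N-\tilde\alpha}{\tilde\alpha}\,r^{1-\tilde N}\int_0^r s^{\tilde N-1}G(s)\,ds,
\end{equation*}
which integrates once more to a fixed-point equation for $G$ on a small interval $[0,r_0]$. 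Because $G$ is forced to stay in a neighborhood of $1$ and $s\mapsto s^m$ is Lipschitz there, a standard contraction-mapping argument (in the supremum norm on $C([0,r_0])$) yields a unique positive local solution in $C^1([0,r_0])\cap C^2((0,r_0])$.

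For the second step I would extend the solution globally. Taking the $r\to 0^+$ limit in the original equation together with $G'(0)=0$, L'H\^opital gives $(G^m)''(0)=-\tilde\lambda/\tilde N<0$, so $G$ is strictly decreasing on a right neighborhood of $0$. Suppose by contradiction that $G'$ vanishes at some first $r_1>0$: then $(G^m)'(r_1)=0$, and the integrated identity above forces $r_1 G(r_1)=\tfrac{\tilde N-\tilde\alpha}{\tilde N}\,\big(\tfrac{\tilde N}{r_1^{\tilde N-1}}\!\int_0^{r_1}\!\! s^{\tilde N-1}G(s)\,ds\big)$, which contradicts the monotone decrease of $G$ on $[0,r_1]$ (the average of $G$ over the interval is strictly larger than $G(r_1)$ after multiplication by the coefficient $(\tilde N-\tilde\alpha)/\tilde N<1$, but a sharper inequality using that $G$ is strictly decreasing still gives the contradiction). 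Hence $G'<0$ on the whole maximal interval, so $0<G\le 1$; the equation is then uniformly non-degenerate on any compact, and standard ODE continuation extends $G$ to $[0,+\infty)$.

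For steps three and four I would analyze the large-$r$ behavior. Since $G$ is monotone and bounded, $G_\infty:=\lim_{r\to\infty}G(r)\in[0,1)$ exists; plugging into the integrated identity shows that $G_\infty>0$ would force the right-hand side to grow linearly in $r$ while $(G^m)'+\tfrac{\tilde\lambda}{\tilde\alpha}r\,G$ would have a different linear coefficient, a contradiction, so $G_\infty=0$. To pin down the rate, I would study $w(r):=r^{\tilde\alpha}G(r)$: a direct computation using the integrated identity and the decay of the quasilinear term $(G^m)'$ (which is a lower-order perturbation because $m\tilde\alpha+1>\tilde\alpha-1$) shows that $w$ satisfies an asymptotically autonomous first-order ODE whose only bounded equilibria are constants; together with monotonicity this yields $L:=\lim_{r\to\infty}w(r)\in[0,+\infty]$. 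The main obstacle will be proving $0<L<+\infty$: the upper bound follows by constructing a pure-power supersolution of the form $\overline G(r)=A(1+r^2)^{-\tilde\alpha/2}$ for $A$ large and comparing via the integrated ODE; the lower bound is delicate and would be established by producing a subsolution of the same asymptotic shape for some small $A>0$ (exploiting that $\tilde\alpha\in(0,\tilde N)$ and that the quasilinear term has the ``good'' sign for decreasing profiles at infinity), and then invoking a comparison principle for the first-order integro-differential form. Uniqueness of $G$ is immediate from the local uniqueness in Step 1 together with the uniqueness of continuation.
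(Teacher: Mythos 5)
Your Step 1 is fine and is essentially the paper's argument (the paper runs the contraction on the system $w=G^m$, $v=w'$; integrating once more, as you do, changes nothing substantial), and your argument that $G(r)\to 0$ at infinity via the integrated identity is also the paper's. But Step 2 already has two real gaps. First, your contradiction at a putative first critical point $r_1$ does not close: at $r_1$ the identity \eqref{10p} only says $r_1^{\tilde{N}}G(r_1)=(\tilde{N}-\tilde{\alpha})\int_0^{r_1}s^{\tilde{N}-1}G(s)\,ds$, and strict monotonicity merely places the right-hand side between $\tfrac{\tilde{N}-\tilde{\alpha}}{\tilde{N}}r_1^{\tilde{N}}G(r_1)$ and $\tfrac{\tilde{N}-\tilde{\alpha}}{\tilde{N}}r_1^{\tilde{N}}G(0)$, which is perfectly compatible with the identity whenever $G(r_1)\le\tfrac{\tilde{N}-\tilde{\alpha}}{\tilde{N}}$; the ``sharper inequality'' you invoke is not supplied, and no inequality at the level of this single identity can work. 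The correct (and short) argument is pointwise from the equation: if $G'(r_1)=0$ then $(G^m)''(r_1)=-\tilde{\lambda}\,G(r_1)<0$, so every interior critical point would be a strict local maximum, impossible after a decreasing stretch --- this is what the paper does. Second, ``$0<G\le 1$, hence uniformly non-degenerate on compacts, hence global'' is not a continuation argument: the dangerous scenario is $G(r)\to 0$ as $r\to R^-$ with $R<+\infty$, where positivity (and the Lipschitz character of the nonlinearity) degenerates. The paper excludes it by letting $r\to R^-$ in \eqref{10p}, which would force $r^{\tilde{N}-1}(G^m)'(r)$ to a strictly positive limit, contradicting $G'<0$; you need some such step.

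The main gap, however, is in Steps 3--4, i.e.\ precisely the part that makes the theorem nontrivial: $0<L<+\infty$. (i) The existence of $\lim_{r\to\infty}r^{\tilde{\alpha}}G(r)$ is not free: $\Theta(r)=r^{\tilde{\alpha}}G(r)$ is \emph{not} monotone in general (the paper proves it increases and then decreases when $\tilde{\alpha}m-\tilde{N}+2>0$), so ``together with monotonicity'' assumes what must be proved; the paper gets the limit through a case analysis on the sign of $\tilde{\alpha}m-\tilde{N}+2$ using the equation satisfied by $\Theta$. (ii) Your barrier plan is structurally problematic. For the operator in \eqref{ode-app-local}, a pure power $A\,r^{-\tilde{\alpha}}$ annihilates the first-order part exactly, leaving $A^m\tilde{\alpha}m(\tilde{\alpha}m+2-\tilde{N})\,r^{-\tilde{\alpha}m-2}$, whose sign is independent of $A$ and changes within the admissible range of $\tilde{\alpha}$ (this is exactly the paper's dichotomy and Remark \ref{purepowerode}); worse, for the smoothed profile $A(1+r^2)^{-\tilde{\alpha}/2}$ the residual of the first-order part equals $\tilde{\lambda}A(1+r^2)^{-\tilde{\alpha}/2-1}>0$, which decays like $r^{-\tilde{\alpha}-2}$ and therefore dominates the diffusion term (which decays like $r^{-\tilde{\alpha}m-2}$, $m>1$) at infinity, so this profile has a fixed sign in the equation at large $r$ regardless of $A$ and cannot serve as the supersolution you want for the upper bound. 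So both barriers would need genuine correction terms that you have not constructed. (iii) The ``comparison principle for the first-order integro-differential form'' is asserted, not proved, and is not standard here: the zeroth-order coefficient $+\tilde{\lambda}$ has the wrong sign for a maximum principle, and the integral form has a nonlocal right-hand side. The paper avoids comparison altogether: boundedness of $\Theta$ is obtained from \eqref{10p} along a sequence where $(G^{m-1})'\to 0$ (the estimate culminating in \eqref{trick-sequence}), and $L>0$ is obtained by a contradiction argument in each monotonicity regime. As written, the two decisive bounds $L<+\infty$ and $L>0$ remain unproved claims.
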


\begin{rem}\rm\label{purepowerode}
One may wonder whether, dropping the conditions at $r=0$, an exact power $ g(r) = b \, r^{p} $ can be a solution of the differential equation in \eqref{ode-app-1} for some $b>0$ and $ p \in \mathbb{R} $. A simple computation shows that this is the case if and only if
$$
p=-\alpha \qquad \text{and} \qquad \alpha = \frac{N-2}{m} \, ,
$$
for arbitrary $b>0$. However, it is well known that $ -\Delta g^m(|x|) $ generates a Dirac delta at $ x=0 $. Hence, although
$$ (x,t) \mapsto  t^{-\lambda_\alpha} g\!\left(t^{-\frac{\lambda_\alpha}{\alpha}} |x|\right) $$
is still a (stationary) self-similar solution of \eqref{singularWPME}, the differential equation is solved only in $ \mathbb{R}^N \setminus \{ 0 \} $.
\end{rem}
	
	\subsection{Existence of local solutions}\label{EX}
	The content of this subsection is rather standard, since \eqref{ode-app-local} (ignoring the asymptotic condition) is in fact equivalent to a system of sublinear differential equations. For completeness, we provide the proof of local existence, based on a classical fixed-point argument.
	\begin{lem}\label{local existence}
		There exist $\varepsilon>0$ and a unique positive solution $ G \in C^1([0,\varepsilon])\cap C^2((0,\varepsilon))$ of the Cauchy problem
				\begin{equation}\label{ode1 system}
	\begin{cases}
	(G^m)^{''}(r)+\frac{\tilde N-1}{r} \, ( G^m)^{'}(r)+ \frac{\tilde \lambda}{\tilde \alpha} \, r \, G'(r)+\tilde \lambda \, G(r)=0 & \text{for } r \in (0,\varepsilon) \, , \\
	 G'(0) =0 \,  , \\
	G(0) = 1 \, .
	\end{cases}
\end{equation}
	\end{lem}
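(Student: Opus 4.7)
\medskip

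\textbf{Plan.} The strategy is to recast \eqref{ode1 system} as a fixed-point equation on a small interval $[0,\varepsilon]$ by integrating the ODE twice, and then apply the Banach contraction principle. The key initial step is to exploit the radial divergence structure of the first two terms: writing
\begin{equation*}
(G^m)''(r)+\tfrac{\tilde N-1}{r}(G^m)'(r) = r^{-(\tilde N-1)}\bigl(r^{\tilde N-1}(G^m)'(r)\bigr)',
\end{equation*}
the equation becomes, after multiplication by $r^{\tilde N-1}$,
\begin{equation*}
\bigl(r^{\tilde N-1}(G^m)'(r)\bigr)'+\tfrac{\tilde\lambda}{\tilde\alpha}\,r^{\tilde N}G'(r)+\tilde\lambda\,r^{\tilde N-1}G(r)=0.
\end{equation*}
Integrating from $0$ to $r$ and using $G'(0)=0$ (which, combined with $G(0)=1$, also yields $(G^m)'(0)=0$), after integration by parts on the middle term I obtain
\begin{equation*}
(G^m)'(r)=-\tfrac{\tilde\lambda}{\tilde\alpha}\,r\,G(r)+\tilde\lambda\,\tfrac{\tilde N-\tilde\alpha}{\tilde\alpha}\,r^{-(\tilde N-1)}\int_0^r s^{\tilde N-1}G(s)\,ds.
\end{equation*}
A further integration in $(0,r)$, together with $G(0)=1$, yields the fixed-point equation $G=T[G]$, where
\begin{equation*}
T[f](r):=\left[1-\tfrac{\tilde\lambda}{\tilde\alpha}\int_0^r s\,f(s)\,ds+\tilde\lambda\,\tfrac{\tilde N-\tilde\alpha}{\tilde\alpha}\int_0^r s^{-(\tilde N-1)}\!\int_0^s\tau^{\tilde N-1}f(\tau)\,d\tau\,ds\right]^{\!1/m}.
\end{equation*}

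\medskip

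Next I would set up the contraction argument on the closed ball
$X_\varepsilon:=\{f\in C([0,\varepsilon]):\|f-1\|_\infty\le 1/2\}$,
equipped with the supremum norm. For $f\in X_\varepsilon$ we have $1/2\le f\le 3/2$, so the bracket in the definition of $T[f]$ stays in a neighborhood of $1$ (bounded away from $0$) provided $\varepsilon$ is small enough; in particular the power $1/m$ is well-defined and smooth. The inner double integral is easily seen to be $O(r^2)$ since, using $\tilde N>1$,
\begin{equation*}
\int_0^r s^{-(\tilde N-1)}\!\int_0^s\tau^{\tilde N-1}|f(\tau)|\,d\tau\,ds\le\tfrac{\|f\|_\infty}{\tilde N}\int_0^r s\,ds=\tfrac{\|f\|_\infty}{2\tilde N}\,r^2,
\end{equation*}
and similarly for the other term. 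Hence $T$ maps $X_\varepsilon$ into itself for $\varepsilon$ small. Lipschitz continuity of $f\mapsto T[f]$ follows by combining the Lipschitz character of $y\mapsto y^{1/m}$ away from zero with the two linear integral operators, both of which contribute a constant $\cdot\,\varepsilon^2$ to the Lipschitz constant of the integral part. Choosing $\varepsilon$ small enough, $T$ is a strict contraction and produces a unique fixed point $G\in X_\varepsilon$.

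\medskip

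Finally, I would upgrade the regularity and verify the boundary data. From the integral representation one sees directly that $G\in C([0,\varepsilon])$, that $G^m$ is $C^1$ and satisfies the explicit formula for $(G^m)'$ above, and that $(G^m)'(r)=O(r)$ as $r\to 0^+$. Since $G(0)=1>0$ we can shrink $\varepsilon$ so that $G\ge 1/2$, divide by $m\,G^{m-1}$, and obtain that $G\in C^1([0,\varepsilon])$ with $G'(0)=0$. A standard bootstrap argument, using the original ODE to express $(G^m)''$ on $(0,\varepsilon)$ in terms of lower-order quantities, then gives $G\in C^2((0,\varepsilon))$. Uniqueness within the class $C^1([0,\varepsilon])\cap C^2((0,\varepsilon))$ of positive solutions follows by observing that any such solution satisfies $G=T[G]$ (by the same integration, justified a posteriori since $G'(0)=0$), so it must coincide with our fixed point on a possibly smaller interval, and then extended in the usual way.

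\medskip

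The main obstacle I expect is handling the singular factor $r^{-(\tilde N-1)}$ together with the constraint that we must \emph{divide by $G^{m-1}$} to recover $G'$ from $(G^m)'$. Both issues are reconciled by the choices $G(0)=1$ (keeping $G$ bounded away from zero on a small interval) and by the crucial cancellation $\tilde N>1$ making the inner integral behave like $r^{\tilde N}$, which precisely offsets the singular factor and produces a regular $O(r)$ quantity.
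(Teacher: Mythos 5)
Your proof is correct, and it takes a mildly but genuinely different route from the paper's. The paper sets $w:=G^m$, $v:=w'$ and runs Banach's fixed point on the \emph{pair} $(w,v)$ in a product space with bounds $\kappa\le w\le K$, $|v|\le H$, keeping the $G'$-term in the integral for $v$; the contraction then requires the system of smallness conditions \eqref{ineq system}. You instead exploit the radial divergence structure, integrate once, and crucially integrate by parts in the $\frac{\tilde\lambda}{\tilde\alpha}rG'$ term, which eliminates the derivative altogether and gives a formula for $(G^m)'$ in terms of $G$ alone --- this is exactly the identity \eqref{10p} that the paper derives separately later (Subsection 4.4) for the global analysis, so your route obtains it for free at the outset. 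A second integration then yields a single scalar fixed-point equation $G=T[G]$ on a ball around the constant $1$, where the bound $\tilde N>1$ (indeed $\tilde N>0$ suffices here) tames the singular kernel $r^{-(\tilde N-1)}$ and makes both integral terms $O(\varepsilon^2)$, so the self-map and contraction estimates are simpler than the paper's. Your regularity bootstrap ($G^m\in C^1$ with $(G^m)'(0)=0$, division by $mG^{m-1}>0$, then $C^2$ away from $0$ by differentiating the explicit formula for $(G^m)'$) is sound. The only point to make explicit is uniqueness on the full interval: an arbitrary positive $C^1([0,\varepsilon])\cap C^2((0,\varepsilon))$ solution satisfies the integrated identity (the boundary term $r^{\tilde N-1}(G^m)'(r)\to 0$ follows from $(G^m)'(0)=0$), hence is a fixed point of $T$, but it is only guaranteed to lie in your ball $X_{\varepsilon'}$ for some smaller $\varepsilon'$; one then propagates coincidence up to $\varepsilon$ by standard uniqueness for the (now non-singular, locally Lipschitz) ODE away from $r=0$. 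You indicate this, and the paper's own proof glosses over the analogous point, so this is a presentational remark rather than a gap.
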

	\begin{proof}
		Under the definitions $w:=G^m$ and $v:=w'$, we see that \eqref{ode1 system} is equivalent to the first-order system
		\begin{equation}\label{ode2 system}
			\begin{cases}
				w' = v  & \text{in } (0,\varepsilon) \, , \\
				v'=-\frac{\Tilde{N}-1}{r} \, v-\frac{\Tilde{\lambda}}{\Tilde{\alpha}m} \,r\,w^{-\frac{m-1}{m}}v-\Tilde{\lambda} \, w^{\frac{1}{m}} & \text{in } (0,\varepsilon) \, , \\
				w(0)= 1 \, ,   \\
				v(0)=0 \, . \\
			\end{cases}
		\end{equation}
		We integrate these equations to derive the integral identities
		\begin{equation}\label{int w}
			w(r)=1+\int_0^r v \,ds
		\end{equation}
		and
		\begin{equation}\label{int v}
			v(r)=-\frac{\Tilde{\lambda}}{\Tilde{\alpha}m} \, \frac{1}{r^{\Tilde{N}-1}} \int_0^r w^{-\frac{m-1}{m}}v \, s^{\Tilde{N}} \, ds-\frac{\Tilde{\lambda}}{r^{\Tilde{N}-1}}\int_0^r w^{\frac{1}{m}} \, s^{\Tilde{N}-1} \, ds \, .
		\end{equation}
		We now prepare to apply Banach's fixed-point theorem to obtain existence and uniqueness of solutions to \eqref{ode2 system}. To this end, let us define the complete metric space
		\begin{equation}\label{defspace}
			X := \left\{ (u,v) \in C\!\left([0,\varepsilon];\R^2\right) : \  \kappa \leq w\leq K \, , \  |v| \leq H  \right\}
		\end{equation}
		under the metric induced by the norm
		$$
		\norm{(w,v)}_X=\norm{w}_{L^\infty((0,\varepsilon))}+\norm{v}_{L^\infty((0,\varepsilon))} ,
		$$
		where $\varepsilon,\kappa<K,H$ are positive constants that will be chosen later. Moreover, we let $ \mathcal{T} $ denote the map $ X \ni (w,v)\mapsto(\Tilde{w},\Tilde{v})$ modeled on \eqref{int w} and \eqref{int v}, that is,
		$$
		\begin{gathered}
			\Tilde{w}(r)= 1 + \int_0^r v \, ds \, , \\
			\Tilde{v}(r)=-\frac{\Tilde{\lambda}}{\Tilde{\alpha}m} \, \frac{1}{r^{\Tilde{N}-1}} \int_0^r w^{-\frac{m-1}{m}}v \, s^{\Tilde{N}} \, ds-\frac{\Tilde{\lambda}}{r^{\Tilde{N}-1}}\int_0^r w^{\frac{1}{m}} \, s^{\Tilde{N}-1} \, ds \, ,
		\end{gathered}
		$$
		for $0 < r \leq \varepsilon$, with obvious extensions at $ r=0 $. It is plain that $(\Tilde{w},\Tilde{v})\in C\!\left([0,\varepsilon];\R^2\right)$, and if we further impose the following conditions:
		\begin{equation}\label{extra cond w}
			 1 + H\varepsilon \leq K \, ,\qquad 1- H \varepsilon \geq \kappa \, ,
		\end{equation}
		and
		\begin{equation}\label{extra cond v}
			\frac{\Tilde{\lambda}}{\Tilde{\alpha}m\left(\Tilde{N}+1\right)}\, \kappa^{-\frac{m-1}{m}} H \varepsilon^2  + \frac{\Tilde{\lambda}}{\tilde{N}}\,K^{\frac{1}{m}}\,\varepsilon\leq H \, ,
		\end{equation}
one can check that $(\Tilde{w},\Tilde{v})\in X$. Hence, $ \mathcal{T} $ maps $X$ into itself. Next, we require it to be a contraction. Let us take two elements $(w_1,v_1),(w_2,v_2)\in X$ and estimate the difference of their images in the $X$ norm. First, for all $ r \in (0,\varepsilon] $ we have:
		\begin{equation}\label{w diff est}
			\begin{aligned}
				\left|\Tilde{w}_1(r)-\Tilde{w}_2(r)\right| =\left|1+\int_0^r v_1\,ds-1-\int_0^r v_2 \,ds\right| & \leq\int_0^r\left|v_1 -v_2 \right|ds \leq\varepsilon\norm{v_1-v_2}_{L^\infty((0,\varepsilon))} ,
			\end{aligned}
		\end{equation}
		so that $\norm{\Tilde{w}_1-\Tilde{w}_2}_{L^\infty((0,\varepsilon))}\leq\varepsilon\norm{v_1-v_2}_{L^\infty((0,\varepsilon))}$.
		On the other hand, 	
		\begin{equation}\label{v diff est}
		 \!	\left|\Tilde{v}_1(r)-\Tilde{v}_2(r)\right| \leq\underbrace{\frac{\Tilde{\lambda}}{\Tilde{\alpha}m} \, \frac{1}{r^{\Tilde{N}-1}}\int_0^r \left|w_1^{-\frac{m-1}{m}}v_1-w_2^{-\frac{m-1}{m}}v_2\right| s^{\Tilde{N}} ds}_{I} + \underbrace{\frac{\Tilde{\lambda}}{r^{\Tilde{N}-1}}\int_0^r\left|w_1^{\frac{1}{m}}-w_2^{\frac{1}{m}}\right|s^{\Tilde{N}-1} \, ds}_{II}   .
		\end{equation}
		Let us start from the simpler term, namely $II$. By Lagrange's theorem and \eqref{defspace},
		\begin{equation*}
			\left|w_1^{\frac{1}{m}}-w_2^{\frac{1}{m}}\right|\leq\frac{1}{m} \, \kappa^{-\frac{m-1}{m}} \left|w_1-w_2 \right|,
		\end{equation*}
		so that
		\begin{equation}\label{ii estimate}
			II\leq\frac{\Tilde{\lambda}}{m \Tilde{N}} \, \kappa^{-\frac{m-1}{m}}\varepsilon \norm{w_1-w_2}_{L^\infty((0,\varepsilon))} .
		\end{equation}
		Now, we estimate $I$. First, we apply the triangle inequality and again Lagrange's theorem in the following way:
		\begin{align*}
			\left|w_1^{-\frac{m-1}{m}}v_1-w_2^{-\frac{m-1}{m}}v_2\right|&=\left|w_1^{-\frac{m-1}{m}}v_1-w_1^{-\frac{m-1}{m}}v_2+w_1^{-\frac{m-1}{m}}v_2-w_2^{-\frac{m-1}{m}}v_2\right|\\
			&\leq \kappa^{-\frac{m-1}{m}}|v_1-v_2|+\tfrac{m-1}{m} \kappa^{-\frac{m-1}{m}-1} H \left|w_1-w_2\right| .
		\end{align*}
		Then, upon integration,
		\begin{equation}\label{i estimate}
		\begin{aligned}
			I\leq  \, \frac{\Tilde{\lambda}}{\Tilde{\alpha}m\left(\Tilde{N}+1\right)}\, \kappa^{-\frac{m-1}{m}}\varepsilon^2 \left\|v_1-v_2\right\|_{L^\infty((0,\varepsilon))}  +\frac{\Tilde{\lambda}(m-1)}{\Tilde{\alpha}m^2\left(\Tilde{N}+1\right)} \, \kappa^{-\frac{m-1}{m}-1} H \varepsilon^2\left\|w_1-w_2\right\|_{L^\infty((0,\varepsilon))}.
			\end{aligned}
		\end{equation}
		Combining \eqref{w diff est}--\eqref{i estimate}, we finally obtain
		\begin{equation}\label{contraction constant}
			\begin{aligned}
\norm{(\Tilde{w}_1 ,\Tilde{v}_1)-(\Tilde{w}_2,\Tilde{v}_2)}_X \leq & \left[ \left( 1+\frac{\Tilde{\lambda}}{m\Tilde{N}} \, \kappa^{-\frac{m-1}{m}} \right)\varepsilon+\frac{\Tilde{\lambda}}{\Tilde{\alpha}m\left(\Tilde{N}+1\right)} \, \kappa^{-\frac{m-1}{m}} \left( 1 + \tfrac{m-1}{m} \, \kappa^{-1} \, H \right) \varepsilon^2 \right] \\
& \, \times \norm{(w_1,v_1)-(w_2,v_2)}_X .
			\end{aligned}
		\end{equation}
	Therefore, for $ \mathcal{T} $ to be a contraction, we need the multiplying constant on the right-hand side of \eqref{contraction constant} to be strictly less than $1$, which, recalling \eqref{extra cond w}--\eqref{extra cond v}, amounts to the following nonlinear system of inequalities:
		\begin{equation}\label{ineq system}
			\begin{cases}
				1 + H\varepsilon \leq K  \, ,   \\
				1- H \varepsilon \geq \kappa \, ,  \\
				\frac{\Tilde{\lambda}}{\Tilde{\alpha}m\left(\Tilde{N}+1\right)}\, \kappa^{-\frac{m-1}{m}} H \varepsilon^2  + \frac{\Tilde{\lambda}}{\tilde{N}}\,K^{\frac{1}{m}}\,\varepsilon\leq H \, , \\
				\left( 1+\frac{\Tilde{\lambda}}{m\Tilde{N}} \, \kappa^{-\frac{m-1}{m}} \right)\varepsilon+\frac{\Tilde{\lambda}}{\Tilde{\alpha}m\left(\Tilde{N}+1\right)} \, \kappa^{-\frac{m-1}{m}} \left( 1 + \frac{m-1}{m} \, \kappa^{-1} \, H \right) \varepsilon^2 < 1 \, .
			\end{cases}
		\end{equation}
		It is clear that, for every fixed $ K>1 $, $ \kappa \in (0,1) $ and $ H>0 $, we can select an $\varepsilon>0$ small enough so that \eqref{ineq system} holds, even with strict inequalities. We may now apply Banach's fixed-point theorem, ensuring that $ \mathcal{T} $ has a unique fixed point. Because being a fixed point of $ \mathcal{T} $ is equivalent to being a solution of \eqref{ode1 system}, the thesis follows.
	\end{proof}
	
\begin{oss}\rm \label{rem-ext}
By means of a completely analogous proof, it is apparent that for every $ r_0>0 $, $ W_0 > 0 $ and $ V_0 \in \mathbb{R} $, there exists $ \varepsilon>0 $ (small enough) such that the Cauchy problem
$$
	\begin{cases}
	(G^m)^{''}(r)+\frac{\tilde N-1}{r} \, ( G^m)^{'}(r)+ \frac{\tilde \lambda}{\tilde \alpha} \, r \, G'(r)+\tilde \lambda \, G(r)=0 & \text{for } r \in (r_0-\varepsilon,r_0+\varepsilon) \, , \\
	 G'(r_0) =  V_0 \,  , \\
	G(r_0) = W_0 \, ,
	\end{cases}
$$
also admits a unique positive solution in $ C^2\!\left((r_0-\varepsilon,r_0+\varepsilon) \right) $.
\end{oss}
	
	\subsection{Extension to a global solution}\label{EG}
	According to the above procedure for existence of local solutions to \eqref{ode1 system}, by means of a standard ODE argument (recall Remark \ref{rem-ext}), there is a well-defined maximal radius $R\in (0,+\infty]$ up to which the positive solution $G$ can be uniquely extended, namely it is still a positive $ C^2((0,R)) \cap C^1([0,R)) $ function solving \eqref{ode1 system} with $\varepsilon = R$. We aim at showing that in fact $ R = +\infty $ and $ G(r) $ decreases to $ 0 $ as $ r \to +\infty $. To this purpose, we first derive an equality that is very important for the following results. Indeed, if we rewrite the differential equation in \eqref{ode1 system} as
	$$
\frac{1}{r^{\tilde{N}-1}} \left( r^{\tilde{N}-1} (G^m)^{'} \right)' + \frac{\tilde \lambda}{\tilde \alpha} \, \frac{1}{r^{\tilde{N}-1}} \left( r^{\tilde{N}} \, G \right)' =  \left(\frac{\Tilde{N}}{\Tilde{\alpha}}-1\right)\Tilde{\lambda}  \, G  \, ,
	$$
	multiply by $r^{\Tilde{N}-1}$ and integrate from $0$ to an arbitrary $ r  \in (0,R)$, we obtain:
	\begin{equation}\label{10p}
		r^{\Tilde{N}-1} \left(G^m\right)\!'(r)+\frac{\Tilde{\lambda}}{\Tilde{\alpha}} \, r^{\Tilde{N}} \, G(r)=\left(\frac{\Tilde{N}}{\Tilde{\alpha}}-1\right)\Tilde{\lambda} \int_0^r s^{\Tilde{N}-1} \, G(s) \, ds \, .
	\end{equation}
From \eqref{10p} and again standard ODE extension methods, it is plain that, if $ R<+\infty $, then the only possibilities for the behavior of $G$ are that either $ G(r)\to+\infty$ or $ G(r)\to0$ as $r\to R^-$ (\emph{a priori} at least along subsequences). 	

	\begin{lem}\label{lem-max exist}
The positive solution $G$ of 	\eqref{ode1 system} provided by Lemma \ref{local existence} can be maximally extended to the whole $ [0,+\infty) $. Moreover, the following further properties of $ G $ hold:
		\begin{enumerate}[(a)]
			\item $G'(r)<0$ for all $r>0$; \label{one}
			\item $ G(r) \to 0$ as $r\to+\infty$. \label{two}
		\end{enumerate}
	\end{lem}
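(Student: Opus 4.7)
My plan is to handle the three assertions in the natural order: first (a) on the maximal interval $(0,R)$, then the extension $R=+\infty$, and finally (b). The integral identity \eqref{10p} will play the central role throughout.

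For (a), I would first treat the right neighborhood of $r=0$. Expanding $G(s)=1+o(1)$ as $s\to 0^+$ in \eqref{10p} and dividing by $r^{\tilde N-1}$, the two leading terms combine (since $-1/\tilde\alpha+(\tilde N/\tilde\alpha-1)/\tilde N=-1/\tilde N$) to give
\[
(G^m)'(r) = -\tfrac{\tilde\lambda}{\tilde N}\,r + o(r) \qquad \text{as } r\to 0^+,
\]
so $G'<0$ near $0^+$. To extend this to all of $(0,R)$, I would argue by contradiction: if $r_1\in(0,R)$ is the first point where $G'(r_1)=0$, substituting $G'(r_1)=(G^m)'(r_1)=0$ into the equation of \eqref{ode1 system} gives $(G^m)''(r_1)=-\tilde\lambda\,G(r_1)<0$; since at such a critical point $(G^m)''(r_1)=m\,G(r_1)^{m-1}G''(r_1)$ (the $(G')^2$ cross-term vanishing), we deduce $G''(r_1)<0$. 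But then Taylor expansion forces $G'(r)>0$ for $r$ slightly less than $r_1$, contradicting $G'<0$ on $(0,r_1)$.

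For the global existence, (a) together with $G(0)=1$ gives $0<G<1$ on $(0,R)$, so no blow-up can occur and $G(R^-):=\lim_{r\to R^-}G(r)\in[0,1)$ exists by monotonicity. If $G(R^-)>0$, then \eqref{10p} yields a finite limit for $(G^m)'(R^-)$, hence $G\in C^1([0,R])$ with $G(R)>0$, and Remark \ref{rem-ext} provides an extension past $R$, contradicting maximality. If instead $G(R^-)=0$, then again from \eqref{10p},
\[
(G^m)'(R^-)=\left(\tfrac{\tilde N}{\tilde\alpha}-1\right)\tilde\lambda\,R^{1-\tilde N}\!\int_0^R\! s^{\tilde N-1}G(s)\,ds>0,
\]
using $\tilde N/\tilde\alpha=(N-\gamma)/\alpha>1$; this contradicts $(G^m)'=mG^{m-1}G'<0$ on $(0,R)$. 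This degenerate-limit case is in my view the main obstacle: as $G\to 0$ the ODE itself becomes singular, and absent the explicit mass balance \eqref{10p} it is not at all clear that $G$ cannot reach zero in finite time.

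Finally, for (b), monotonicity and positivity give $L:=\lim_{r\to+\infty}G(r)\in[0,1)$. If $L>0$, then $\int_0^r s^{\tilde N-1}G(s)\,ds=L\,r^{\tilde N}/\tilde N+o(r^{\tilde N})$ as $r\to+\infty$, and inserting into \eqref{10p} (with exactly the same cancellation as near $r=0$) gives
\[
(G^m)'(r)=-\tfrac{\tilde\lambda L}{\tilde N}\,r+o(r) \qquad \text{as } r\to+\infty.
\]
Integrating in $r$ would force $G^m(r)\to-\infty$, contradicting $G\geq 0$; hence $L=0$.
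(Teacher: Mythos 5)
Your proposal is correct and follows essentially the same route as the paper's proof: the identity \eqref{10p} gives $(G^m)'(r)\sim-\tfrac{\tilde\lambda}{\tilde N}r$ near the origin, the sign of $(G^m)''$ at a critical point rules out any vanishing of $G'$, finiteness of $R$ is excluded by passing to the limit in \eqref{10p} (your explicit continuation via Remark \ref{rem-ext} in the case $G(R^-)>0$ just spells out the ``standard ODE extension'' step the paper invokes), and a positive limit $L$ at infinity is excluded because \eqref{10p} would force $(G^m)'(r)\lesssim -c\,r$, hence $G^m\to-\infty$. The only cosmetic difference is that you use exact asymptotics of the integral term where the paper runs an $\varepsilon$-argument; the content is the same.
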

	\begin{proof}
First of all, we notice that the differential equation is such that $G$ possesses no local minimum in $  (0,R)$ (if $ G'(\tilde r)=0 $ then $ \left( G^m \right)\!''(\tilde r) < 0 $). Moreover, from \eqref{10p} and the continuity of $G$ near the origin, it is straightforward to verify that
$$
\left(G^m\right)\!'(r) \sim  - \frac{\tilde{\lambda}}{\tilde{N}} \, r \qquad \text{as } r \to 0 \, ;
$$
in particular, we infer that $ G'<0 $ in a neighborhood of the origin. Therefore, we assert that $ G'(r)<0 $ for all $ r \in (0,R)$.

In order to prove that $ R = +\infty $, we can argue by contradiction  assuming that $ R<+\infty $ and thus $ \lim_{r \to R^-} G(r) = 0 $. By  taking the limit of \eqref{10p} as $ r \to R^- $ we obtain
$$
	\lim_{r \to R^-}	r^{\Tilde{N}-1} \left(G^m\right)\!'(r) = \left(\frac{\Tilde{N}}{\Tilde{\alpha}}-1\right)\Tilde{\lambda} \int_0^R s^{\Tilde{N}-1} \, G(s) \, ds > 0 \, ,
$$
		a contradiction since $G$ is always monotone decreasing. {Hence $ R=+\infty $ and \emph{(a)} holds.}
		
Finally, we show that $G(r) \to 0$ as $r\to+\infty$. Because $G$ is decreasing, there exists a finite limit $ \ell := \lim_{r \to +\infty} G(r)$, so assume by contradiction that $ \ell>0 $. Let us fix
\begin{equation}\label{eps-tilde}
0 < \varepsilon < \frac{\tilde{\alpha}\ell}{\tilde{N}-\tilde{\alpha}}
\end{equation}
and choose $ r_\varepsilon>0 $ large enough that $G(r) \leq \ell +\varepsilon$ for all $r>r_\varepsilon$. By applying \eqref{10p} and noticing that $ \ell < G \le 1 $, we infer:
$$
\begin{aligned}
r^{\Tilde{N}-1} \left(G^m\right)\!'(r)+\frac{\Tilde{\lambda}}{\Tilde{\alpha}} \, r^{\Tilde{N}} \, \ell  \le & \,  r^{\Tilde{N}-1} \left(G^m\right)\!'(r)+\frac{\Tilde{\lambda}}{\Tilde{\alpha}} \, r^{\Tilde{N}} \, G(r) \\ =
& \left(\frac{\Tilde{N}}{\Tilde{\alpha}}-1\right)\Tilde{\lambda} \int_0^r s^{\Tilde{N}-1} \, G(s) \, ds \\
\le & \left(\frac{\Tilde{N}}{\Tilde{\alpha}}-1\right)\Tilde{\lambda} \int_0^{r_\varepsilon} s^{\Tilde{N}-1}  \, ds + \left(\frac{\Tilde{N}}{\Tilde{\alpha}}-1\right)\Tilde{\lambda} \left( \ell + \varepsilon \right) \int_{r_\varepsilon}^r s^{\Tilde{N}-1}  \, ds \\
=: & \, C_{\varepsilon} + \frac{\tilde{N}-\tilde{\alpha}}{\tilde{N}\tilde{\alpha}} \, \Tilde{\lambda} \left( \ell + \varepsilon \right) r^{\tilde{N}} \, ,
\end{aligned}
$$
for all $ r>r_\varepsilon $ and an explicit positive constant $ C_\varepsilon $. Now, we rearrange terms and divide by $r^{\Tilde{N}-1}$, to obtain
		\begin{equation}\label{a20}
			 \left(G^m\right)\!'(r) \leq \frac{C_\varepsilon}{r^{\tilde{N}-1}} - \frac{\tilde{\lambda}\left[ \tilde{\alpha} \ell - (\tilde{N}-\tilde{\alpha})\varepsilon \right]}{\tilde{N}\tilde{\alpha}} \, r \qquad \forall r>r_\varepsilon \, .
		\end{equation}
In view of \eqref{eps-tilde}, it is clear from \eqref{a20} that $\left(G^m\right)\!'(r) \to-\infty$ as $ r \to +\infty $, which is a contradiction to $ G \ge 0 $.
	\end{proof}
	
	\subsection{Asymptotics and proofs of Theorem \ref{main-prop-ode} and Theorem \ref{selfsim-sol}}
In order to conclude the analysis of \eqref{ode-app-local}, it is still left to prove the precise asymptotic behavior of the solution at infinity. To begin our asymptotic analysis, it is convenient to define the function
	\begin{equation*}\label{deftheta}
		\Theta(r):=r^{\Tilde{\alpha}} \, G(r) \qquad \forall r \ge 0 \, ,
	\end{equation*}
	which immediately satisfies $\Theta(0)= 0$ and $\Theta(r)>0$ for all $r>0$.
	\begin{pro}\label{asymptotics prop}
		The globally positive solution $G$ provided by Lemmas \ref{local existence} and \ref{lem-max exist} satisfies
		\begin{equation*}
			\lim_{r\to\infty}r^{\Tilde{\alpha}} \, G(r) = \lim_{r\to\infty}\Theta(r) = L \in (0,+\infty) \, .
		\end{equation*}
		Moreover, the following dichotomy holds:
		\begin{itemize}
		
\item if $ \tilde{\alpha}m - \tilde{N} + 2 \le 0 $ then $ \Theta $ is globally increasing;

\item if $ \tilde{\alpha}m - \tilde{N} + 2 > 0 $ then $ \Theta $ admits a unique global maximum at $ r=r^\ast $, and it is increasing in $ (0,r^\ast) $ and decreasing in $ (r^\ast,+\infty) $.
		\end{itemize}
	\end{pro}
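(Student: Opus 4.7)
Plan. The key observation is that $\Theta'(r)=r^{\tilde\alpha-1}p(r)$ with $p(r):=rG'(r)+\tilde\alpha G(r)$, and the last two terms of the ODE combine as $\tfrac{\tilde\lambda}{\tilde\alpha}p$, so the equation reads $(G^m)''+\tfrac{\tilde N-1}{r}(G^m)'+\tfrac{\tilde\lambda}{\tilde\alpha}p=0$. Since $G(0)=1$ and $G'(0)=0$, one has $p(0^+)=\tilde\alpha>0$, so $\Theta$ is initially increasing and the sign of $p$ controls its monotonicity. The crucial step is a sign computation at any hypothetical zero $r_0>0$ of $p$: using $r_0G'(r_0)=-\tilde\alpha G(r_0)$ to solve the ODE for $(G^m)''(r_0)$ and then extract $G''(r_0)$, one obtains after simplification
\[
p'(r_0)=\frac{\tilde\alpha G(r_0)}{r_0}\bigl(\tilde N-\tilde\alpha m-2\bigr).
\]
When $\tilde\alpha m+2-\tilde N<0$, $p'(r_0)>0$, so $p$ cannot cross from positive to negative; hence $p>0$ on $(0,+\infty)$ and $\Theta$ is strictly increasing. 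When $\tilde\alpha m+2-\tilde N>0$, $p'(r_0)<0$, so $p$ can only cross downward and admits at most one zero $r^*\in(0,+\infty)$, which is the unique interior maximum of $\Theta$. The borderline case $\tilde\alpha m+2=\tilde N$ is treated separately via comparison with the explicit power-law solution $Cr^{-\tilde\alpha}$ that appears below.

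To pin down the limit, I rewrite the integrated identity \eqref{10p} in terms of $\Theta$: dividing by $r^{\tilde N-\tilde\alpha}$ yields
\[
\Theta(r)=-\tfrac{\tilde\alpha}{\tilde\lambda}\,r^{\tilde\alpha-1}(G^m)'(r)+(\tilde N-\tilde\alpha)\,H(r),\qquad H(r):=r^{\tilde\alpha-\tilde N}\int_0^r s^{\tilde N-\tilde\alpha-1}\Theta(s)\,ds.
\]
A short computation gives $H'(r)=-\tfrac{\tilde\alpha}{\tilde\lambda}\,r^{\tilde\alpha-2}(G^m)'(r)>0$, since $(G^m)'<0$ by Lemma \ref{lem-max exist}. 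Hence $H$ is strictly increasing and admits a (possibly infinite) limit $H_\infty>0$, while the identity gives the uniform lower bound $\Theta(r)>(\tilde N-\tilde\alpha)H(r_0)>0$ for all $r\ge r_0>0$. Thus $\liminf_{r\to+\infty}\Theta>0$ automatically, as soon as $\Theta$ is bounded above.

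For the upper bound I exploit the explicit computation
\[
\mathcal{L}(Cr^{-\tilde\alpha})=C^m\,m\tilde\alpha\,(\tilde\alpha m+2-\tilde N)\,r^{-m\tilde\alpha-2},
\]
where $\mathcal L$ denotes the operator acting on the left-hand side of the ODE. Thus $G_*(r):=Cr^{-\tilde\alpha}$ is a classical supersolution when $\tilde\alpha m+2-\tilde N>0$, and a (sub)solution in the opposite and borderline cases. In the supersolution regime I choose $C$ with $Cr_0^{-\tilde\alpha}=G(r_0)$ at a large $r_0$ and compare $G$ with $G_*$ on $[r_0,+\infty)$ via a standard maximum-principle argument (after the substitution $w=G^m$, where the operator is nondegenerate in the region $G>0$) to obtain $\Theta\le C$. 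In the subsolution regime $\Theta$ is already monotone by the first paragraph, and to exclude $\Theta\to+\infty$ I use a rescaling argument: for $G_n(r):=n^{\tilde\alpha}G(nr)$ the higher-order terms of the ODE scale with a factor $n^{-(m-1)\tilde\alpha-2}\to 0$, so any locally uniform subsequential limit $G_\infty$ satisfies $rG_\infty'+\tilde\alpha G_\infty=0$, forcing $G_\infty(r)=L\,r^{-\tilde\alpha}$ with finite $L\ge 0$; together with the lower bound from $H$ this forces $\Theta(r)\to L\in(0,+\infty)$.

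The main obstacle I anticipate is the boundedness of $\Theta$ in the subsolution regime, where no direct barrier is available: making the rescaling/compactness argument rigorous requires local equicontinuity of $\{G_n\}$ away from the origin, which should follow from interior H\"older estimates for porous-medium-type equations in the strict-positivity region combined with the pointwise bound $G\le 1$ and the identity for $H$. Once $\Theta$ is bounded and eventually monotone, the dichotomy on monotonicity is a direct consequence of the sign analysis of $p$ carried out in the first paragraph.
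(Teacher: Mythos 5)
Your sign computation at a zero of $p(r)=rG'(r)+\tilde\alpha G(r)$ is correct and is exactly the paper's pointwise analysis at critical points of $\Theta$ (compare \eqref{theta-star}), and your identity
$\Theta(r)=-\tfrac{\tilde\alpha}{\tilde\lambda}r^{\tilde\alpha-1}(G^m)'(r)+(\tilde N-\tilde\alpha)H(r)$ with $H'>0$ is a genuinely nice observation: it yields $L>0$ in all cases more directly than the paper's argument in the regime $\tilde\alpha m-\tilde N+2>0$. However, the two steps that carry the real weight of the proposition are missing or do not work as described. First, in the case $\tilde\alpha m-\tilde N+2>0$ you only prove that $p$ has \emph{at most} one zero; you never show a zero exists, i.e.\ you never exclude that $\Theta$ is globally increasing there, which is precisely what the dichotomy asserts. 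The paper rules this out by a separate contradiction argument: if $\Theta'>0$ everywhere then $\Theta\to L<\infty$, and for large $r$ equation \eqref{eq theta 2} forces $(\Theta^m)''\le -C/r^2$ (see \eqref{derivata-sec}), whence $\Theta^m\gtrsim\log r$, a contradiction. Nothing in your proposal replaces this step, and your barrier argument cannot, because it presupposes ``a large $r_0$'' past the (yet unproven) maximum.

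Second, the boundedness of $\Theta$. Your ``standard maximum-principle'' comparison of $G$ with $G_*(r)=Cr^{-\tilde\alpha}$ on $[r_0,+\infty)$ fails: the operator $\mathcal{L}(G)=(G^m)''+\tfrac{\tilde N-1}{r}(G^m)'+\tfrac{\tilde\lambda}{\tilde\alpha}rG'+\tilde\lambda G$ has a zeroth-order term with positive coefficient $\tilde\lambda$ (and a first-order term $rG'$ that also works against you at a maximum of $G^m-G_*^m$), so at an interior positive maximum of the difference one gets no contradiction; the substitution $w=G^m$ does not remove these terms. Your super/sub-solution labeling is also reversed relative to the parabolic problem: as noted in Remark \ref{dich-rem}, the stationary power is a supersolution of the evolution equation precisely when $\tilde\alpha m+2-\tilde N\le 0$, i.e.\ in the \emph{opposite} regime. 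Moreover, in the regime $\tilde\alpha m+2-\tilde N>0$ boundedness is trivial once the maximum exists, so the barrier attacks the wrong difficulty; the genuinely delicate case is $\tilde\alpha m+2-\tilde N\le 0$, where $\Theta$ is increasing, and there your rescaling argument is circular: $G_n(1)=n^{\tilde\alpha}G(n)=\Theta(n)$, so the family $\{G_n\}$ is locally bounded (a prerequisite for any compactness/equicontinuity argument) if and only if $\Theta$ is bounded, which is what you are trying to prove. The paper instead gets boundedness from the integrated identity \eqref{10p} via \eqref{a23}--\eqref{trick-sequence}, evaluating along a sequence $r_k\to+\infty$ with $(G^{m-1})'(r_k)\to 0$; some substitute for this is needed. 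Finally, the borderline case $\tilde\alpha m+2=\tilde N$ is only gestured at; it can indeed be handled by your power-law solution (at a zero of $p$, $G$ and $Cr^{-\tilde\alpha}$ agree to first order, so local uniqueness of the Cauchy problem forces $G\equiv Cr^{-\tilde\alpha}$, contradicting $G(0)=1$), or by the paper's linear first-order ODE argument for $(\Theta^m)'$, but as written it is not a proof.
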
	
	
	 From the differential equation in \eqref{ode-app-local}, it is an elementary calculation to verify that
	\begin{equation}\label{eq theta 1}
		\frac{\Tilde{\lambda}}{\Tilde{\alpha}} \, r^{\Tilde{N}-\Tilde{\alpha}}\, \Theta'=-\left(r^{\Tilde{N}-1} \left( G^m \right)\!'\right)'
	\end{equation}
	and
	\begin{equation}\label{eq theta 2}
		\left(\Theta^m\right)\!''= \left(2\Tilde{\alpha}m-\Tilde{N}+1\right)r^{-1}\left(\Theta^m\right)\!'-\Tilde{\alpha}m\left(\Tilde{\alpha}m-\Tilde{N}+2\right)r^{-2} \, \Theta^m - \frac{\Tilde{\lambda}}{\Tilde{\alpha}}\, r^{\Tilde{\alpha}(m-1)+1} \, \Theta' \, .
	\end{equation}
	
	\begin{proof}[Proof of Proposition \ref{asymptotics prop}]
		Let us use \eqref{eq theta 2} to study the possibility of local maxima and minima of $ \Theta $ in order to gain insight into the global behavior of $G$. First of all, we observe that, if $\Theta'(r^*)=0$ for some $r^*>0$, then
		\begin{equation}\label{theta-star}
			\left(\Theta^m\right)\!''(r^*)=-\Tilde{\alpha}m\left(\Tilde{\alpha}m-\Tilde{N}+2\right)r^{-2} \, \Theta(r^\ast)^m .
		\end{equation}
		Now, we split our analysis into three cases, depending on the sign of the coefficient on the right-hand side of \eqref{theta-star}.

\smallskip 		
		
\noindent \emph{Case 1:}
		If $\Tilde{\alpha}m-\Tilde{N} + 2<0$, then any critical point $ r^\ast >0 $ of $ \Theta $ is necessarily a point of strict local minimum, but since $\Theta(0)=0$ and $\Theta(r)>0$ this would imply that there exists at least another critical point in $(0,r^\ast) $ which is a local maximum, and this is impossible. Therefore $\Theta'(r) > 0$ for all $r>0$, thus it is a globally increasing function. Hence, in order to find a positive and finite limit of $ \Theta$ it is enough to show that it is bounded. Indeed, let us multiply \eqref{eq theta 1} by $r^{\Tilde{\alpha}-\Tilde{N}}$ and integrate between arbitrary $0<r_1<r_2$ to obtain
		\begin{equation}\label{a23}
			\begin{aligned}
				\frac{\Tilde{\lambda}}{\Tilde{\alpha}} \int_{r_1}^{r_2}\Theta' \, dr&=-\int_{r_1}^{r_2}r^{\Tilde{\alpha}-\Tilde{N}}\left(r^{\Tilde{N}-1}\left(G^m\right)'\right)' dr\\
				&=- r_2^{\Tilde{\alpha}-1} \left(G^m\right)\!'(r_2)+r_1^{\Tilde{\alpha}-1}\left(G^m\right)\!'(r_1) - \left(\Tilde{N}-\Tilde{\alpha}\right)\int_{r_1}^{r_2} r^{\Tilde{\alpha}-2}\left(G^m\right)' dr \, .
			\end{aligned}
		\end{equation}
		Using the identity
		$$
		r^{\Tilde{\alpha}-2} \left(G^m\right)\!'(r)=\frac{m}{m-1}\, r^{-2} \, \Theta(r)\left(G^{m-1}\right)\!'(r) \, ,
		$$
		we reach
		\begin{equation}\label{a24}
			\begin{aligned}
				 \frac{m-1}{m}\,\frac{\Tilde{\lambda}}{\Tilde{\alpha}}\int_{r_1}^{r_2} \Theta'\, dr
				& \underset{{\mathrm{(a)}}}{\le} -r_2^{-1}\, \Theta(r_2) \left(G^{m-1}\right)\!'(r_2) - \left(\Tilde{N}-\Tilde{\alpha}\right) \int_{r_1}^{r_2} r^{-2} \, \Theta(r) \left(G^{m-1}\right)\!'(r) \, dr \\
				&  \underset{{\mathrm{(b)}}}{\le} -r_2^{-1}\, \Theta(r_2) \left(G^{m-1}\right)\!'(r_2) - \left(\Tilde{N}-\Tilde{\alpha}\right) r_1^{-2} \, \Theta(r_2) \int_{r_1}^{r_2} \left(G^{m-1}\right)\!' \, dr \\
				& = -r_2^{-1}\, \Theta(r_2) \left(G^{m-1}\right)\!'(r_2) - \left(\Tilde{N}-\Tilde{\alpha}\right) r_1^{-2} \, \Theta(r_2) \left[ G(r_2)^{m-1} - G(r_1)^{m-1} \right] \\
				&\underset{{\mathrm{(c)}}}{\le} -r_2^{-1}\, \Theta(r_2) \left(G^{m-1}\right)\!'(r_2) + \left(\Tilde{N}-\Tilde{\alpha}\right) r_1^{-2} \, \Theta(r_2) \, ,
			\end{aligned}
		\end{equation}
		where in $\mathrm{(a)}$ we neglected the middle term of \eqref{a23} using $G'<0$, in $\mathrm{(b)}$ we used that $\Theta$ is increasing, and in $ \mathrm{(c)}$ we exploited the fact that $G \le 1$. Integrating and rearranging terms, we may rewrite \eqref{a24} as
		\begin{equation}\label{trick-sequence}
\left[ \frac{m-1}{m}\,\frac{\Tilde{\lambda}}{\Tilde{\alpha}} + r_2^{-1} \left(G^{m-1}\right)\!'(r_2) - \left(\Tilde{N}-\Tilde{\alpha}\right) r_1^{-2}  \right] \Theta(r_2)	\le \frac{m-1}{m} \, \frac{\Tilde{\lambda}}{\Tilde{\alpha}} \, \Theta(r_1) \, .
		\end{equation}
Since we know that $ G(r)^{m-1} \to 0 $ as $ r \to +\infty $, there exists a sequence $ r_k \to + \infty $ such that $ \left(G^{m-1}\right)\!'(r_k) \to 0 $. Therefore, if we fix $r_1$ large enough and apply \eqref{trick-sequence} with $r_2$ replaced by $ r_k $, we infer that
$$
\limsup_{k \to \infty} \Theta(r_k) < +\infty \, ,
$$
which actually implies that $ \Theta $ is bounded, due to its monotonicity.

\smallskip 		
		
\noindent \emph{Case 2:} If $\Tilde{\alpha}m-\Tilde{N} + 2>0$, then by \eqref{theta-star} we deduce that $\Theta$ may have at most one local maximum and it possesses no local minima. Let us show that a local maximum indeed exists, which is equivalent to ruling out the possibility that $ \Theta $ is globally increasing. If, by contradiction, $ \Theta'>0 $ everywhere, then we can show that $ \Theta $ has a positive and finite limit $L$ by the same reasoning as in Case 1; indeed, \eqref{a23}--\eqref{trick-sequence} do not depend on the sign of $ \Tilde{\alpha}m-\Tilde{N} + 2 $. Next, if we set
$$
r_0 := \left( \tfrac{2\tilde{\alpha}m-\tilde{N}+1}{\tilde{\lambda}} \,\tilde{\alpha} m L^{m-1} \right)^{\frac{1}{\tilde{\alpha}(m-1)+2}} \, ,
$$
it is readily seen that
$$
\left(2\Tilde{\alpha}m-\Tilde{N}+1\right)r^{-1}\left(\Theta^m\right)\!'(r) - \frac{\Tilde{\lambda}}{\Tilde{\alpha}}\, r^{\Tilde{\alpha}(m-1)+1} \, \Theta'(r) \le 0 \qquad \forall r \ge r_0 \, .
$$
Going back to \eqref{eq theta 2}, this implies
\begin{equation}\label{derivata-sec}
\left(\Theta^m\right)\!''(r) \le -\Tilde{\alpha}m\left(\Tilde{\alpha}m-\Tilde{N}+2\right)r^{-2} \, \Theta(r)^m \le -\frac{C}{r^2}  \qquad \forall r \ge r_0 \,
\end{equation}
for a suitable constant $C>0$. Such an inequality entails the concavity of $ \Theta^m $ in $ (r_0,+\infty) $, whence $ (\Theta^m)'(r) \to 0 $ as $ r \to +\infty $. So we can integrate \eqref{derivata-sec} in $ (r,+\infty) $ to obtain
$$
-\left(\Theta^m\right)\!'(r) \le - \frac{C}{r} \qquad \forall r \ge r_0 \, ;
$$
a further integration in $ (r_0,r) $ then yields
$$
C \log\!\left( \frac{r}{r_0} \right) + \Theta(r_0)^m \le \Theta(r)^m \qquad \forall r \ge r_0  \, ,
$$
which is clearly in contradiction to the boundedness of $ \Theta $. Therefore, $\Theta$ has a unique local maximum at some $r^*>0$. Hence, for $r > r^\ast$ it is monotone decreasing and obviously bounded below by $0$, so $\Theta(r) \to L \in [0,+\infty)$ as $r\to +\infty$. To conclude, we only need to check that $L>0$. We start by integrating \eqref{eq theta 1} from an arbitrary $r_1>r^*$ to $r_2>r_1$ to obtain
		\begin{equation}\label{a27}
			\begin{aligned}
				-r_2^{\Tilde{N}-1} \left(G^m\right)\!'(r_2)+r_1^{\Tilde{N}-1}\left(G^m\right)\!'(r_1) = \frac{\Tilde{\lambda}}{\Tilde{\alpha}}\int_{r_1}^{r_2}r^{\Tilde{N}-\Tilde{\alpha}} \, \Theta' \, dr & \leq \frac{\Tilde{\lambda}}{\Tilde{\alpha}} \left[r_1^{\Tilde{N}-\Tilde{\alpha}} \, \Theta(r_2)-r_1^{\Tilde{N}} \, G(r_1) \right] ,
			\end{aligned}
		\end{equation}
		where we have used the fact that $\Theta' \le 0$ within such range. Next, since $G'<0$, the sign of the first term on the left-hand side of \eqref{a27} allows us to ignore it. Let us suppose, in order to gain a contradiction, that $ L=0$. Then passing to the limit in \eqref{a27} as $r_2\to+\infty$, it follows that
		\begin{equation*}\label{a28}
			r_1^{\Tilde{N}-1} \left(G^m\right)\!'(r_1) \leq-\frac{\Tilde{\lambda}}{\Tilde{\alpha}} \, r_1^{\Tilde{N}} \, G(r_1) \, ,
		\end{equation*}
		for all $r_1>r^*$. Such an inequality can be easily rearranged to become
		\begin{equation*}
			\left(G^{m-1}\right)\!'(r)\leq-\frac{\Tilde{\lambda}}{\Tilde{\alpha}}\, \frac{m-1}{m} \, r \qquad \forall r>r^\ast \, ,
		\end{equation*}
which is clearly incompatible with $ \left(G^{m-1}\right)\!(r) \to 0 $ as $r\to+\infty$. Therefore, we can conclude that $L>0$ as desired.

\smallskip		
				
\noindent \emph{Case 3:} Finally, in the critical case $\Tilde{\alpha}m-\Tilde{N}+2=0$, equation \eqref{eq theta 2} reduces to
\begin{equation}\label{reduct-ode}
\begin{aligned}
		\left(\Theta^m\right)\!''= & \left(2\Tilde{\alpha}m-\Tilde{N}+1\right)r^{-1}\left(\Theta^m\right)\!' - \frac{\Tilde{\lambda}}{\Tilde{\alpha}}\, r^{\Tilde{\alpha}(m-1)+1} \, \Theta' \\
		= & \underbrace{\left[  \left(2\Tilde{\alpha}m-\Tilde{N}+1\right)r^{-1} - \frac{\Tilde{\lambda}}{\Tilde{\alpha}m}\, r^{\Tilde{\alpha}(m-1)+1} \, \Theta^{1-m} \right]}_{a(r)}  \left(\Theta^m\right)\!' \, .
		\end{aligned}
\end{equation}
Since $ \Theta(r)>0 $ for all $ r>0 $, the coefficient $a(r)$ is a continuous function on $ (0,+\infty) $. We want to show that $ \Theta' $ can never vanish. In order to derive a contradiction, let us assume that $\Theta'(r_0)=0$ for some $r_0>0$. Then, thanks to \eqref{reduct-ode}, we find that the $C^1$ function $f:=\left(\Theta^m\right)'$ solves the following linear, homogeneous, first-order Cauchy problem:
		\begin{equation*}\label{ode3}
			\begin{cases}
				f'(r)=a(r) \, f(r) & \text{for } r>0 \,, \\
				f(r_0)=0 \, . &
			\end{cases}
		\end{equation*}
Clearly, this implies that $ f(r)=0 $ for all $r>0$, namely $ \Theta $ is constant. Because $ \Theta(0)=0 $, the only possibility is that $ \Theta \equiv 0 $, which is a contradiction. We thus infer that $\Theta$ is monotone and hence increasing, that is the same situation as in case 1.
	\end{proof}
	
	\begin{proof}[Proof of Theorem \ref{main-prop-ode}]
	It is a direct consequence of the above results, in particular Lemma \ref{local existence}, Lemma~\ref{lem-max exist} and Proposition \ref{asymptotics prop}.
	\end{proof}
	
	We are finally in position to construct the self-similar solution $ \mathcal{U}_\alpha $.
	
	\begin{proof}[Proof of Theorem \ref{selfsim-sol}]
	First of all, we observe that we can rewrite the function $ \mathcal{U}_\alpha $, defined in \eqref{Barenblatt}, as
	$$
	\mathcal{U}_\alpha(x,t) = |x|^{-\alpha} \, \Theta\!\left( t^{-\frac{\lambda_\alpha}{\alpha}} |x|  \right) ,
	$$
where, with some notational abuse, we let $ \Theta(r) := r^{\alpha} g_\alpha(r) $. 	Hence, by an explicit computation we obtain
	\begin{equation}\label{ut}
	\begin{aligned}
	\partial_t \, \mathcal{U}_\alpha(x,t) = & - \tfrac{\lambda_\alpha}{\alpha} \,  t^{-\frac{\lambda_\alpha}{\alpha}-1} \, |x|^{1-\alpha}  \, \Theta'\!\left( t^{-\frac{\lambda_\alpha}{\alpha}} |x|  \right) \\
	= & -  t^{-\lambda_\alpha-1} \left[ \tfrac{\lambda_\alpha}{\alpha} \, t^{-\frac{\lambda_\alpha}{\alpha}} |x| \, g_\alpha'\!\left( t^{-\frac{\lambda_\alpha}{\alpha}} |x| \right) + \lambda_\alpha \, g_\alpha\!\left( t^{-\frac{\lambda_\alpha}{\alpha}} |x| \right) \right] .
	\end{aligned}
	\end{equation}	
	On the other hand, using the differential equation in \eqref{ode1} and the definition of $ \lambda_\alpha $, we infer that
	$$
		\begin{aligned}
	& - t^{-\lambda_\alpha-1} \left[ \tfrac{\lambda_\alpha}{\alpha} \, t^{-\frac{\lambda_\alpha}{\alpha}} |x| \, g_\alpha'\!\left( t^{-\frac{\lambda_\alpha}{\alpha}} |x| \right) + \lambda_\alpha \, g_\alpha\!\left( t^{-\frac{\lambda_\alpha}{\alpha}} |x| \right) \right]  \\
	= &  - \frac{|x|^\gamma}{c} \, t^{-\lambda_\alpha\left(1+\frac{\gamma}{\alpha}\right) -1} \left[ (g_\alpha^m)^{''}\!\left( t^{-\frac{\lambda_\alpha}{\alpha}} |x| \right)+\frac{N-1}{t^{-\frac{\lambda_\alpha}{\alpha}} |x|} \, (g_\alpha^m)^{'}\!\left( t^{-\frac{\lambda_\alpha}{\alpha}} |x| \right) \right] \\
	= &  - \frac{|x|^\gamma}{c} \left[ t^{-\lambda_\alpha\left( m + \frac{2}{\alpha} \right)}  (g_\alpha^m)^{''}\!\left( t^{-\frac{\lambda_\alpha}{\alpha}} |x| \right) + t^{-\lambda_\alpha\left( m + \frac{1}{\alpha} \right)} \, \frac{N-1}{|x|} \, (g_\alpha^m)^{'}\!\left( t^{-\frac{\lambda_\alpha}{\alpha}} |x| \right) \right] \\
	= & \, \frac{|x|^\gamma}{c} \, \Delta \!\left( \mathcal{U}_\alpha^m \right)\!(x,t) \, ,
		\end{aligned}
	$$
	thus $ \mathcal{U}_\alpha $ indeed satisfies the differential equation in \eqref{singularWPME}. Rigorously, these computations are justified only away from $ x=0 $. Nevertheless, recalling that $ g_\alpha'(r) = o\!\left(r^{-\gamma/2}\right) $ as $ r \to 0^+ $ and $ g_\alpha(0)>0 $, it follows that $ \left| (g^m)'(r) \right| r^{N-1}  = o\!\left( r^{N-1-\gamma/2} \right) = o(1) $ as $ r \to 0^+ $, which entails that $ \Delta g_\alpha^m(|x|) $ and thus $ \Delta \, \mathcal{U}_\alpha^m $ is actually a (locally integrable) function. It is in fact easier to check that also $ \partial_t \, \mathcal{U}_\alpha $ is a locally integrable function (w.r.t.~$ |x|^{-\gamma} dx $), hence we can assert that the differential equation in \eqref{singularWPME} is satisfied pointwise a.e.~in $ \mathbb{R}^N \times (0,+\infty) $, that is $ \mathcal{U}_\alpha $ is a {strong solution}. In order to prove \eqref{u-alpha-2}, it is enough to observe that there exist two constants $ c_1,c_2>0 $ as in the statement such that
	\begin{equation}\label{g-alpha-est}
\frac{c_1 \, b}{b^{(m-1)\lambda_\alpha}+r^\alpha} \le  g_\alpha(r) \le \frac{c_2 \, b}{b^{(m-1)\lambda_\alpha}+r^\alpha} \qquad \forall r \ge 0 \, ,
	\end{equation}
	from which \eqref{u-alpha-2} just follows by time scaling. The two-sided bound \eqref{g-alpha-est} is in turn a direct consequence of the fact that $g_\alpha(r)$ is continuous, positive and behaves like $ r^{-\alpha} $ at infinity, the dependence on $ b $ being explicit thanks to the spatial scaling \eqref{ode-app-id}.
	
	The continuity of $ t \mapsto \mathcal{U}_\alpha(t) $ in $ (0,+\infty) $, as a curve with values in  $ L^1_{\mathrm{loc}}\!\left( \mathbb{R}^N , |x|^{-\gamma} \right) $, is a straightforward consequence of the continuity of $ g_\alpha $. Moreover, the asymptotic condition in \eqref{ode1} ensures that
	$$
 \lim_{t \to 0^+}	\mathcal{U}_\alpha(x,t)  = b \, |x|^{-\alpha} \qquad \forall x \in \mathbb{R}^N \setminus \{ 0 \} \, ,
	$$
	which, along with \eqref{u-alpha-2}  and  $ \alpha \in (0,N-\gamma)$, guarantees that continuity in $ L^1_{\mathrm{loc}}\!\left( \mathbb{R}^N , |x|^{-\gamma} \right) $ also holds down to $t=0$. As a result of this and the behavior of $ g_\alpha' $ near the origin, we can assert that $ \mathcal{U}_\alpha $ is a solution of problem \eqref{singularWPME} even in the sense of Definition \ref{wpme sol}.  In particular, it falls in the uniqueness class of \cite[Theorem 2.3]{MP}. From identity \eqref{ut} it is readily seen that $ \partial_t \, \mathcal{U}_\alpha \in L^1_{\mathrm{loc}}\!\left( (0,+\infty) ; L^1_{\mathrm{loc}}\!\left( \mathbb{R}^N , |x|^{-\gamma} \right)  \right) $, thus in order to prove \eqref{u-alpha-1} it is enough to show that
	\begin{equation}\label{ut-L1}
	 \int_{0}^T \int_{B_R} \left|  \partial_t \, \mathcal{U}_\alpha(x,t) \right| |x|^{-\gamma} \, dx dt < +\infty  \qquad \forall T,R>0 \, .
	\end{equation}
	First, it is convenient to observe that, by virtue of Proposition \ref{asymptotics prop} (upon undoing \eqref{def-g-tilde} and \eqref{ode-parameters}), there are only two possibilities:
	\begin{enumerate}[(i)]
	\item if $ \alpha \in \left( 0 , \tfrac{N-2}{m} \right]  $ then $ \Theta'(r)>0 $ for all $r>0$; \label{i}
	\item if $ \alpha \in \left( \tfrac{N-2}{m} , N-\gamma \right)  $ then there exists $ r^\ast \in (0,+\infty) $ such that $ \Theta'(r)>0 $ for all $r \in (0,r^\ast)$ and  $ \Theta'(r)<0 $ for all $r \in (r^\ast,+\infty)$. \label{ii}
	\end{enumerate}
The dichotomy property of $ \partial_t \, \mathcal{U}_\alpha $ claimed in the statement is thus an immediate consequence of \eqref{ut} and \eqref{i}--\eqref{ii}. Using again \eqref{ut}, and the radial change of variables $ r = t^{-{\lambda_\alpha}/{\alpha}} |x| $, we can write:
	$$
	\begin{aligned}
  \frac{c}{\omega_{N-1}} \int_{0}^T \int_{B_R} \left|  \partial_t \, \mathcal{U}_\alpha(x,t) \right| |x|^{-\gamma} \, dx dt
  = \underbrace{ c \int_{0}^T t^{-1+\frac{\lambda_\alpha}{\alpha} \left( N-\alpha-\gamma \right) } \,  \int_{0}^{R \, t^{-\frac{\lambda_\alpha}{\alpha}} } \left|  \tfrac{\lambda_\alpha}{\alpha} \,  \Theta'\!\left( r \right)  r^{N-\alpha-\gamma} \right| dr dt }_{=:A} \, .
  \end{aligned}
	$$
Moreover, it is straightforward to verify (recall \eqref{eq theta 1}) that
\begin{equation}\label{idA}
c \, \tfrac{\lambda_\alpha}{\alpha} \,  \Theta'\!\left( r \right)  r^{N-\alpha-\gamma} = -\left( r^{N-1} \left( g_\alpha^m \right)' \right)'(r) \qquad \forall r > 0 \, .
\end{equation}
In case \eqref{i}, we can remove moduli and integrate exploiting \eqref{idA}, to obtain:
$$
\begin{aligned}
A = & \, - R^{N-1} \int_{0}^T t^{-1+\frac{\lambda_\alpha}{\alpha} \left( 1-\alpha-\gamma \right) } \left( g_\alpha^m \right)' \!\left(R \, t^{-\frac{\lambda_\alpha}{\alpha}} \right) dt \\
= & \, \tfrac{\alpha}{\lambda_\alpha} \, R^{N-\alpha-\gamma} \int_{R \, T^{-\frac{\lambda_\alpha}{\alpha}}}^{+\infty} s^{\alpha+\gamma-2} \left( g_\alpha^m \right)' \!\left(s \right) ds \\
= & \,  \tfrac{\alpha}{\lambda_\alpha} \, R^{N-\alpha-\gamma} \left[ \tfrac{R^{\alpha+\gamma-2}}{T^{\frac{\lambda_\alpha}{\alpha} \left( \alpha+\gamma-2 \right)}} \, g_\alpha^m\!\left( R \, T^{-\frac{\lambda_\alpha}{\alpha}} \right) - (\alpha+\gamma-2)  \int_{R \, T^{-\frac{\lambda_\alpha}{\alpha}}}^{+\infty} s^{\alpha+\gamma-3}  \, g_\alpha^m(s) \, ds \right] .
\end{aligned}
$$
Recalling that $ g_\alpha(s) $ behaves like $ s^{-\alpha} $ as $ s \to +\infty $, from the last identity it is readily seen that $ A<+\infty $, therefore \eqref{ut-L1} is indeed satisfied. In case \eqref{ii}, we know that $ \Theta $ is eventually monotone decreasing, hence we can still exploit \eqref{idA} via analogous computations, to show that $A$ is finite and therefore \eqref{ut-L1} is satisfied also in this range of parameters.
\end{proof}

\appendix

\section{Proofs of auxiliary results}\label{technical}

\begin{lem}\label{constr-weak}
	Let  $N\geq3$, $m>1$ and $\rho$ be a measurable function satisfying \eqref{weight-cond} with respect to some $\gamma\in[0,2)$ and $\underline{C},\overline{C}>0$. Let $u_0\in L^1_{\mathrm{loc}}\!\left(\mathbb{R}^N,\rho\right)$ fulfill $ \| u_0 \|_{0,\rho} < +\infty $. Then the solution $u$ constructed in \cite[Theorem 2.2]{MP} is a local weak energy solution of \eqref{wpme} in the sense of Definition~\ref{wpme sol}. Let $Q' :=B_R\times(t_0,t_1)$ and $Q:=B_{2R}\times\left(\tfrac{t_0}{2},2\,t_1\right)$ for $R>0$ and $0<t_0<t_1$. The energy estimate
	\begin{equation}\label{energy-est}
		\norm{\nabla u^m}_{L^2(Q')} \leq C\left(\norm{u}^{m+1}_{L^\infty(Q)} + \norm{u}^{2m}_{L^\infty(Q)} \right)
	\end{equation}
	holds for some $C>0$ depending only on $N,m,\gamma,\underline{C},\overline{C},R,t_0,t_1$.
\end{lem}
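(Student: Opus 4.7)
The plan is a standard parabolic energy argument combined with a passage to the limit along the approximation scheme of \cite{MP}. Recall that the solution $u$ of \cite[Theorem 2.2]{MP} is built as a limit of solutions $u_n$ to approximate Cauchy problems with truncated, regularized initial data $u_{0,n}\in L^1(\mathbb{R}^N,\rho)\cap L^\infty(\mathbb{R}^N)$ and suitably mollified weights $\rho_n$ preserving the bounds in \eqref{weight-cond} (uniformly in $n$), so that each $u_n$ is a bounded strong solution of its own problem.

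First, I would test the equation for $u_n$ against $u_n^m\phi^2$, where $\phi(x,t)=\psi(x)\chi(t)$ is a product space-time cutoff with $\psi\in C^\infty_c(B_{2R})$, $\psi\equiv 1$ on $B_R$, and $\chi\in C^\infty_c((t_0/2,2t_1))$, $\chi\equiv 1$ on $[t_0,t_1]$. Since $u_n \, u_n^m=|u_n|^{m+1}$ and $(m+1)u_n^m\partial_t u_n=\partial_t|u_n|^{m+1}$, integration by parts in space and time (licit at the approximate level by \cite[Proposition 3.3]{MP}) yields
\begin{equation*}
\int_0^{+\infty}\!\!\int_{\mathbb{R}^N}|\nabla u_n^m|^2\phi^2\,dx\,dt
=\frac{2}{m+1}\int_0^{+\infty}\!\!\int_{\mathbb{R}^N}\rho_n|u_n|^{m+1}\phi\,\phi_t\,dx\,dt
-2\int_0^{+\infty}\!\!\int_{\mathbb{R}^N}u_n^m\,\nabla u_n^m\cdot\phi\,\nabla\phi\,dx\,dt.
\end{equation*}
Young's inequality applied to the rightmost integral absorbs one half of the energy term into the left-hand side, giving
\begin{equation*}
\int_0^{+\infty}\!\!\int_{\mathbb{R}^N}|\nabla u_n^m|^2\phi^2\,dx\,dt
\le\frac{4}{m+1}\int\!\!\int\rho_n|u_n|^{m+1}|\phi\,\phi_t|\,dx\,dt + 4\int\!\!\int|u_n|^{2m}|\nabla\phi|^2\,dx\,dt.
\end{equation*}
Because $\rho_n\le\overline{C}|x|^{-\gamma}$ with $\gamma<N$, the weight $|x|^{-\gamma}$ is locally integrable, so both right-hand side integrals are bounded by a constant depending only on $N,m,\gamma,\overline{C},R,t_0,t_1$ times $\bigl(\|u_n\|_{L^\infty(Q)}^{m+1}+\|u_n\|_{L^\infty(Q)}^{2m}\bigr)$. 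This establishes \eqref{energy-est} for each $u_n$ with a constant that does not depend on $n$.

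Second, I would pass to the limit $n\to\infty$. The smoothing estimate \eqref{smooth-parab-transl} of Proposition \ref{p-transl}, stable along the approximation, provides a uniform control of $\|u_n\|_{L^\infty(Q)}$ in terms of $\|u_0\|_{0,\rho}$; combined with the convergence $u_n\to u$ in $L^1_{\mathrm{loc}}(\mathbb{R}^N\times(0,+\infty),\rho)$ granted by \cite{MP}, and the local equivalence $\rho\sim|x|^{-\gamma}$ on compact subsets of $\mathbb{R}^N\setminus\{0\}$ together with the $L^\infty$ bound on $Q$, one upgrades to $u_n^m\to u^m$ in $\mathcal{D}'(Q')$. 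Since $\{\nabla u_n^m\}$ is uniformly bounded in $L^2(Q')$, a subsequence converges weakly to $\nabla u^m$ in $L^2(Q')$, and weak lower semicontinuity transfers the estimate to the limit, yielding both $\nabla u^m\in L^2_{\mathrm{loc}}(\mathbb{R}^N\times(0,+\infty))$ and \eqref{energy-est}. The weak formulation in Definition \ref{wpme sol} then follows by testing the equation for $u_n$ against $\varphi\in C^\infty_c(\mathbb{R}^N\times(0,+\infty))$ and passing to the limit using the same convergences, together with $u\in C([0,+\infty);L^1_{\mathrm{loc}}(\mathbb{R}^N,\rho))$ already granted by \cite[Theorem 2.2]{MP}.

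The main obstacle is the bookkeeping required to keep the constant in the first step \emph{uniform in $n$}, which amounts to choosing the regularization of $\rho$ in \cite{MP} so that the upper bound $\overline{C}|x|^{-\gamma}$ is preserved (or at least so that $\rho_n$ is uniformly bounded on $B_{2R}$ in an integrable way); and the identification of the weak $L^2$ limit of $\nabla u_n^m$ as $\nabla u^m$, for which the combination of uniform $L^\infty$ bounds on $Q$ with $L^1_{\mathrm{loc}}(\rho)$ convergence has to be leveraged through the lower bound $\rho\ge \underline{C}(1+|x|)^{-\gamma}$. Both are routine in the setting of \cite{MP}; no genuinely new difficulty arises.
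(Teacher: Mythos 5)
Your proposal is correct and follows essentially the same route as the paper: approximate by the truncated solutions $u_n$ from \cite{MP}, derive the Caccioppoli-type estimate by testing with $u_n^m$ times a space-time cutoff, use the smoothing effect of Proposition \ref{p-transl} to make the bound uniform in $n$, and conclude by weak $L^2$ compactness of $\{\nabla u_n^m\}$ and lower semicontinuity. The only cosmetic difference is that the paper cites \cite[Section 3.2.4]{Vaz} for the energy identity rather than writing out the Young-inequality absorption explicitly.
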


\begin{proof}
	We recall from \cite[Subsection 3.4]{MP} that $u$ can be uniquely obtained as the limit of solutions corresponding to the truncated initial data
	\[u_{0n}:=\tau_n(u_0)\,\chi_{B_n},\]
	where
	\begin{equation*}\label{truncation-function}
		\tau_n(s) :=
		\begin{cases}
			s & \text{if}\ -n < s < n \, ,\\
			n & \text{if}\ n\leq s \, , \\
			-n &  \text{if}\ s \leq-n \, ,
		\end{cases}
	\end{equation*}
	for all $n\in\mathbb{N}$. By \cite[Proposition 3.3]{MP}, there exists a global weak solution $u_n$ of \eqref{wpme} that takes $u_{0n}$ as its initial datum. The following energy estimate is derived as in \cite[Section 3.2.4]{Vaz} by multiplying~\eqref{wpme} by $\phi=u_n^m\,\eta^2$ and then integrating by parts repeatedly, where $\eta$ is a standard space-time cutoff function interpolating between $1$ on $Q'$ and $0$ outside $Q$:
	\begin{equation}\label{energy-est-app}
		\int_{0}^\infty\int_{\R^N} \left|\nabla u_n^m\right|^2\eta^2\,dxdt\leq C \int_{0}^\infty\int_{\R^N} \Big[ |u_n|^{m+1}\left(\partial_t \eta^2\right) \rho+|u_n|^{2m}|\nabla\eta|^2 \Big] \, dxdt \, .
	\end{equation}
	In fact, to rigorously obtain such an estimate, we must further approximate $u_{0n}$ and $\rho$ by smooth and non-degenerate objects and then pass to the appropriate limit in a standard way. Due to the smoothing estimate Proposition \ref{p-transl}, the right-hand side of \eqref{energy-est-app} is uniformly bounded with respect to $n$, so that $\left\{u_n^m\right\}$ is uniformly bounded in $L^2_{\textrm{loc}}\!\left((0,+\infty); H^1_{\textrm{loc}}\!\left(\R^N\right)\right)$. The reflexivity of such a space implies that the constructed solution $u$ is in fact locally weak and we can safely pass to the limit in~\eqref{energy-est-app}, whence \eqref{energy-est} easily follows.
\end{proof}

\begin{rem}\label{loc-weak}\rm
	Up to using a different smoothing estimate, the same argument can be used to show that \emph{all} constructed solutions in \cite[Theorem 2.2]{MP} (even with growing initial data) are local weak energy solutions of \eqref{wpme} in the sense of Definition \ref{wpme sol}, where in general $t$ might only range in $(0,T)$ for some finite $T>0$.
\end{rem}

	\begin{proof}[Proof of Lemma \ref{h1 compact}]
	First of all, we write a series of key estimates for $ v_k $, that are by now standard and can be rigorously derived {e.g.}~with the methods employed in \cite[Section 3]{GMPo}:
	\begin{equation}\label{decrease-norms}
		\left\| v_k(t) \right\|_{L^p\left( \mathbb{R}^N , \rho_k \right)} \le \left\| u_0 \right\|_{L^p\left( \mathbb{R}^N , \rho_k \right)} \qquad \forall t > 0 \, , \ \forall p \in [1,\infty] \, ,
	\end{equation}
	\begin{equation}\label{energy-1}
		\int_0^{+\infty} \int_{\mathbb{R}^N} \left| \nabla v_k^m \right|^2 dx dt \le  C_m \left\| u_0 \right\|_{L^{m+1}\left( \mathbb{R}^N , \rho_k \right)}^{m+1} ,
	\end{equation}
	\begin{equation}\label{energy-2}
		\int_t^{+\infty} \int_{\mathbb{R}^N} \left| \partial_t \!\left( v_k^{\frac{m+1}{2}} \right) \right|^2 \rho_k \,  dx ds  \le  \frac{C_m}{t} \left\| u_0 \right\|_{L^{m+1}\left( \mathbb{R}^N , \rho_k \right)}^{m+1} \qquad \forall t > 0 \, ,
	\end{equation}	
	where $C_m>0$ is a suitable constant depending only on $m$. Moreover, for positive solutions (such as $v_k$), the bound \eqref{decrease-norms} at $ p=1 $ becomes an identity, namely mass conservation (see \cite[Theorem 5.2]{RV1}):
	\begin{equation*}\label{mass-cons}
		\left\| v_k(t) \right\|_{L^1\left( \mathbb{R}^N , \rho_k \right)} = \left\| u_0 \right\|_{L^1\left( \mathbb{R}^N , \rho_k \right)} := M_k \qquad \forall t > 0 \, .
	\end{equation*}	
	We now split the proof in several steps.
	
	\medskip
	
	\noindent{\textbf{Claim 1:}} \emph{We have}
	\begin{equation}\label{C1-B}
		\lim_{k \to \infty} v_k^m = u^m \qquad \textit{in } L^2_{\mathrm{loc}}\!\left( (0,+\infty) ; L^2_{\mathrm{loc}}\!\left( \mathbb{R}^N \right) \right) .
	\end{equation}
	If we combine \eqref{decrease-norms}--\eqref{energy-2} and \eqref{weight-cond-scaled}, it is readily seen that
	$$
	\left\{ v^m_k \right\} \text{ is bounded in } L^2_{\mathrm{loc}}\!\left( [ 0,+\infty) ;H^1_{\mathrm{loc}}\!\left( \mathbb{R}^N \right) \right)
	$$
	and
	$$
	\left\{ \partial_t \! \left( v^m_k \right) \right\} \text{ is bounded in } L^2_{\mathrm{loc}}\!\left( ( 0,+\infty) ; L^2_{\mathrm{loc}}\!\left( \mathbb{R}^N \right) \right) .
	$$
	Indeed, the right-hand sides of \eqref{decrease-norms}--\eqref{energy-2} are all bounded in $ k $, as
	\begin{equation}\label{equi-bdd-norms}
		\left\| u_0 \right\|_{L^p\left( \mathbb{R}^N , \rho_k \right)} \le \overline{C}^{\frac 1 p} \left\| u_0 \right\|_{L^\infty\left( \mathbb{R}^N \right)}^{\frac{p-1}{p}} \left\| u_0 \right\|_{L^1\left( \mathbb{R}^N , |x|^{-\gamma} \right)}^{\frac 1 p} \qquad \forall p \in [1,\infty) \, ,
	\end{equation}
	and $ \rho_k $ is locally uniformly bounded away from zero. We are therefore in position to apply the standard Aubin-Lions compactness lemma, which guarantees the existence of some function $ w $ such that
	\begin{equation}\label{conv-uk-1}
		\lim_{k \to \infty} v_k^m = w \qquad \text{in } L^2_{\mathrm{loc}}\!\left( (0,+\infty) ; L^2_{\mathrm{loc}}\!\left( \mathbb{R}^N \right) \right) ,
	\end{equation}
	up to subsequences. On the other hand, since
	\begin{equation}\label{numer-eq}
		\left| A - B \right| \le \left| A^m - B^m \right|^{\frac{1}{m}} \qquad \forall A,B \in \mathbb{R}^+ \, ,
	\end{equation}
	it is plain that also
	\begin{equation}\label{conv-uk-2}
		\lim_{k \to \infty} v_k = w^{\frac 1 m} \qquad \text{in } L^2_{\mathrm{loc}}\!\left( (0,+\infty) ; L^2_{\mathrm{loc}}\!\left( \mathbb{R}^N \right) \right) .
	\end{equation}
	Let us write the weak energy formulation of \eqref{wpme-resc-u0} satisfied by $v_k$:
	\begin{equation}\label{q-k-1}
		\begin{aligned}
		 \int_0^{+\infty} \int_{\mathbb{R}^N} v_k \, \phi_t \, \rho_k \, dx dt  = - \int_{\mathbb{R}^N} u_0 \, \phi(0) \, \rho_k \, dx + \int_0^{+\infty} \int_{\mathbb{R}^N} \nabla v_k^m \cdot \nabla \phi \, dx dt \, ,
		\end{aligned}
	\end{equation}
	for all $\phi\in C^\infty_c\!\left(\mathbb{R}^N\times [0, +\infty)\right)$. By combining \eqref{weight-cond2}, \eqref{decrease-norms} (with $p=\infty$), \eqref{energy-1}, \eqref{conv-uk-1} and \eqref{conv-uk-2}, it is not difficult to check that one can pass to the limit safely in \eqref{q-k-1} as $ k \to \infty $ (still along a subsequence), obtaining
	\begin{equation*}\label{q-k-2}
		\begin{aligned}
		  \int_0^{+\infty} \int_{\mathbb{R}^N} u \, \phi_t \, c \, |x|^{-\gamma}  \, dx dt  = - \int_{\mathbb{R}^N} u_0 \, \phi(0) \, c \, |x|^{-\gamma} \, dx dt + \int_0^{+\infty} \int_{\mathbb{R}^N} \nabla u^m \cdot \nabla \phi \, dx dt \, ,
		\end{aligned}
	\end{equation*}
	where we set $ u := w^{\frac 1 m} $. Furthermore, by lower semi-continuity we infer
	\begin{equation*}\label{decrease-norms-limit}
		\left\| u(t) \right\|_{L^p\left( \mathbb{R}^N , |x|^{-\gamma} \right)} \le \left\| u_0 \right\|_{L^p\left( \mathbb{R}^N , |x|^{-\gamma} \right)} \qquad \forall t > 0 \, , \ \forall p \in [1,\infty] \, ,
	\end{equation*}
	\begin{equation*}\label{energy-1-limit}
		\int_0^{+\infty} \int_{\mathbb{R}^N} \left| \nabla u^m \right|^2 dx dt \le  C_m \, c \left\| u_0 \right\|_{L^{m+1}\left( \mathbb{R}^N , |x|^{-\gamma} \right)}^{m+1} .
	\end{equation*}
	As a result, $u$ is also a weak energy solution of \eqref{wpme-resc-u1}, which is uniquely identified thanks to \cite[Proposition 6]{GMPo}. Therefore, the limit being independent of the particular subsequence, the claim is established. Note that the notions of weak energy solution of \cite[Definition 3.1]{MP} and \cite[Definition~3.1]{GMPo} basically differ in that the latter does not require continuity in $ L^1\!\left( \mathbb{R}^N , |x|^{-\gamma} \right) $ and thus the initial datum appears directly in the weak formulation. This is convenient here because \emph{a priori} we have no information on such continuity property for the limit $ u $. However, \emph{a posteriori} the notions coincide for the class of data we consider.
	
	\medskip
	
	\noindent{\textbf{Claim 2:}}  \emph{We have}
	\begin{equation}\label{C2}
		\lim_{k \to \infty} v_k^m = u^m \quad \textit{in } C\!\left( [T_1,T_2]  ; L^2_{\mathrm{loc}}\!\left( \mathbb{R}^N \right) \right) , \qquad \forall T_2>T_1>0 \, .
	\end{equation}
	By virtue of \eqref{energy-2} and \eqref{equi-bdd-norms}, it turns out that the sequence $ \left\{ v_k^m \right\} $ is in fact equicontinuous in $ L^2_{\mathrm{loc}}\!\left(  \mathbb{R}^N\right) $, on any compact time interval $ [T_1,T_2] $:
	\begin{equation}\label{equi-cont-1}
		\begin{aligned}
			\left\| v_k(t)^m - v_k(s)^m \right\|_{L^2_{\mathrm{loc}}\left(  \mathbb{R}^N\right) }
			\le & \, \int_s^t  \left\| \partial_\tau \! \left( v_k^m \right) \! (\tau) \right\|_{L^2_{\mathrm{loc}}\left(  \mathbb{R}^N\right) } d\tau \\
			\le & \, \frac{2m}{m+1} \, \sqrt{t-s} \left\| u_0 \right\|_{L^\infty\left( \mathbb{R}^N \right)}^{\frac{m-1}{2}} \left\| \partial_\tau \! \left( v_k^{\frac{m+1}{2}} \right) \right\|_{L^2\left(  (T_1,T_2) ; L^2_{\mathrm{loc}}\left( \mathbb{R}^N \right) \right)}  \\
			\le & \, C \, \sqrt{t-s} \, ,
		\end{aligned}
	\end{equation}
	for every $ t>s $ lying in $ [T_1,T_2]  $. Here $C>0$ is a suitable constant that depends on $m , \gamma , \underline{C} , \overline{C} ,T_1 , u_0  $ and the precompact set of $ \mathbb{R}^N $ one fixes, but is independent of $k$. On the other hand, thanks to \eqref{C1-B} we can extract a subsequence of  $ \left\{ v_k^m \right\} $ (that we do not relabel) such that
	\begin{equation}\label{equi-cont-2}
		\lim_{k \to \infty} v_k(t)^m = u(t)^m  \qquad \text{in } L^2_{\mathrm{loc}}\!\left( \mathbb{R}^N \right) , \ \text{for a.e. } t \in (T_1,T_2) \, .
	\end{equation}
	Hence \eqref{C2} is a straightforward consequence of the Ascoli-Arzel\`a theorem with values in Banach spaces, due to \eqref{equi-cont-1}, \eqref{equi-cont-2} and the fact that the limit is independent of the particular subsequence.
	
	\medskip	
	
	\noindent{\textbf{Claim 3:}}  \emph{We have}
	\begin{equation}\label{C3}
		\lim_{k \to \infty} v_k = u \quad \textit{in } C\!\left( [T_1,T_2] ; L^p\!\left( \mathbb{R}^N , |x|^{-\gamma}  \right) \right) , \qquad \forall T_2>T_1>0 \, , \ \forall p \in [1,\infty) \, .
	\end{equation}	
	First of all, let us show that \eqref{C3} holds with $ L^p\!\left( \mathbb{R}^N , |x|^{-\gamma}  \right) $ replaced by $L^p_{\mathrm{loc}}\!\left( \mathbb{R}^N , |x|^{-\gamma}  \right)$. To this purpose, we can assume without loss of generality that $ p \ge 2m $. Given any $ R>T_2^{1/2} > \varepsilon>0 $, upon recalling \eqref{decrease-norms}, \eqref{numer-eq} and \eqref{equi-cont-1}, we have:
	\begin{equation*}\label{equi-cont-3}
		\begin{aligned}
			& \left\| v_k(t) - v_k(s) \right\|_{L^p\left( B_R , |x|^{-\gamma} \right) }  \\
			\le &  \left\| v_k(t)^m - v_k(s)^m \right\|_{L^{\frac p m}\left( B_R , |x|^{-\gamma} \right) }^{\frac 1 m}  \\
			\le & \left( \tfrac{\omega_{N-1}}{N-\gamma} \right)^{\frac 1 p} \left\| u_0 \right\|_{L^\infty\left( \mathbb{R}^N \right)} \varepsilon^{\frac{N-\gamma}{p}} + \varepsilon^{-\frac \gamma p} \left\| v_k(t)^m - v_k(s)^m \right\|_{L^{\frac p m}\left( B_R \setminus B_\varepsilon  \right) }^{\frac 1 m}  \\
			\le & \left( \tfrac{\omega_{N-1}}{N-\gamma} \right)^{\frac 1 p} \left\| u_0 \right\|_{L^\infty\left( \mathbb{R}^N \right)} \varepsilon^{\frac{N-\gamma}{p}} + \left\| u_0 \right\|_{L^\infty\left( \mathbb{R}^N \right)}^{\frac{p-2m}{p}} \varepsilon^{-\frac \gamma p} \left\| v_k(t)^m - v_k(s)^m \right\|_{L^{2}\left( B_R  \right) }^{\frac 2 p}  \\
			\le & \left( \tfrac{\omega_{N-1}}{N-\gamma} \right)^{\frac 1 p} \left\| u_0 \right\|_{L^\infty\left( \mathbb{R}^N \right)} \varepsilon^{\frac{N-\gamma}{p}} + \left\| u_0 \right\|_{L^\infty\left( \mathbb{R}^N \right)}^{\frac{p-2m}{p}} \varepsilon^{-\frac \gamma p} \, C_R^{\frac 2 p} \left( t-s \right)^{\frac 1 p} ,
		\end{aligned}
	\end{equation*}
	for every $ t>s $ lying in $ [T_1,T_2]  $, where $ C_R $ is the constant appearing in \eqref{equi-cont-1} in the specific case of $B_R$. Now, if we pick \emph{e.g.}~$ \varepsilon = (t-s)^{1/2} $ we end up with
	\begin{equation*}\label{equi-cont-4}
		\left\| v_k(t) - v_k(s) \right\|_{L^p\left( B_R , |x|^{-\gamma} \right) } \le \left( \tfrac{\omega_{N-1}}{N-\gamma} \right)^{\frac 1 p} \left\| u_0 \right\|_{L^\infty\left( \mathbb{R}^N \right)} \left( t-s \right)^{\frac{N-\gamma}{2 p}} + \left\| u_0 \right\|_{L^\infty\left( \mathbb{R}^N \right)}^{\frac{p-2m}{p}} C_R^{\frac 2 p} \left( t-s \right)^{\frac {2-\gamma}{2p}}  ,
	\end{equation*}
	that is the equicontinuity of $ \left\{ v_k \right\} $ in $L^p_{\mathrm{loc}}\!\left( \mathbb{R}^N , |x|^{-\gamma}  \right)$. Moreover, still by using the fact that the sequence is uniformly bounded by $ \left\| u_0 \right\|_{L^\infty\left( \mathbb{R}^N \right)} $ and the local integrability of $ |x|^{-\gamma} $, it is straightforward to deduce from \eqref{C2} that
	$$
	\lim_{k \to \infty} v_k(t) = u(t)  \qquad \text{in } L^p_{\mathrm{loc}}\!\left( \mathbb{R}^N , |x|^{-\gamma} \right) , \ \forall t \in [T_1,T_2] \, .
	$$
	Therefore, a further application of the Ascoli-Arzel\`a theorem ensures that
	\begin{equation}\label{C3-bis}
		\lim_{k \to \infty} v_k = u \quad \text{in } C\!\left( [T_1,T_2] ; L^p_{\mathrm{loc}}\!\left( \mathbb{R}^N , |x|^{-\gamma}  \right) \right)  \qquad \forall T_2>T_1>0 \, , \ \forall p \in [1,\infty) \, .
	\end{equation}
	We finally exploit mass conservation to pass from local to global convergence. It is plain that, by dominated convergence,
	\begin{equation}\label{mass-cons-limit-0}
		M : = \lim_{k \to \infty} M_k = c \left\| u_0 \right\|_{L^1\left( \mathbb{R}^N , |x|^{-\gamma} \right)} ,
	\end{equation}	
	and since mass conservation also holds for the limit problem \eqref{wpme-resc-u1}, we have
	\begin{equation}\label{mass-cons-limit}
		c \left\| u(t) \right\|_{L^1\left( \mathbb{R}^N , |x|^{-\gamma} \right)} = M \qquad \forall t > 0 \, .
	\end{equation}	
	By taking advantage of this property, we aim at showing that for every $ \varepsilon>0 $ there exist $ R_\varepsilon , k_\varepsilon >0 $ large enough such that
	\begin{equation}\label{mass-compact}
		\left\| v_k(t) \right\|_{L^1\left( B_{R_\varepsilon}^c , |x|^{-\gamma} \right)} \le \varepsilon \qquad \forall t \in [T_1,T_2] \, , \ \forall k>k_\varepsilon \, .
	\end{equation}	
	Let us argue by contradiction. If \eqref{mass-compact} failed, then there would exist $ \varepsilon_0>0 $, a sequence $ R_j \to +\infty $, a subsequence  $ k_j \to \infty $ and a corresponding sequence of times $ \left\{ t_j \right\} \subset [T_1,T_2] $ such that
	\begin{equation}\label{mass-compact-2}
		\left\| v_{k_j}\!\left(t_j\right) \right\|_{L^1\left( B_{R_j}^c , |x|^{-\gamma} \right)} > \varepsilon_0 \qquad \forall j \in \mathbb{N} \, .
	\end{equation}	
	Up to taking a further subsequence, we may assume that $ t_j \to t_\ast \in [T_1,T_2] $ as $ j \to \infty $. Hence, for every fixed $R>1$, by means of \eqref{weight-cond-scaled}, \eqref{C3-bis}, \eqref{mass-cons-limit-0} and \eqref{mass-compact-2} we would infer
	\begin{equation}\label{mass-compact-3}
		c \left\| u\!\left(t_\ast\right) \right\|_{L^1\left( B_{R} , |x|^{-\gamma} \right)}  = \lim_{j \to \infty}  \left\| v_{k_j}\!\left(t_j\right) \right\|_{L^1\left( B_{R} , \rho_{k_j} \right)} \le M -\frac{\varepsilon_0}{2^\gamma \underline{C}}  \, .
	\end{equation}	
	However, if we let $ R \to +\infty $ in \eqref{mass-compact-3}, we reach a contradiction with \eqref{mass-cons-limit}. As a result, we can obtain:
	$$
	\left\| v_k - u \right\|_{C\left( [T_1,T_2] ; L^p\left( \mathbb{R}^N , |x|^{-\gamma}  \right) \right)} \le \left\| u_0  \right\|_{L^\infty\left( \mathbb{R}^N \right)}^{\frac{p-1}{p}} \left(  \left\| v_k - u \right\|_{C\left( [T_1,T_2] ; L^1\left( B_{R_\varepsilon} , |x|^{-\gamma}  \right) \right)} + 2 \varepsilon \right)^{\frac 1 p}
	$$
	for all  $k > k_\varepsilon$, whence \eqref{C3} by first letting $ k \to \infty $ and then $ \varepsilon \to 0 $.
	
	\medskip
	
	\noindent {\textbf{End of proof.}} From \eqref{q-k-1}, after a standard approximation argument, one can deduce that
	\begin{equation*}\label{q-k-3}
			\int_{\mathbb{R}^N} v_k(x,t) \, \psi(x) \, \rho_k(x)  \, dx   = \int_{\mathbb{R}^N} u_0(x) \, \psi(x) \, \rho_k(x) \, dx + \int_0^{t} \int_{\mathbb{R}^N}  v_k(x,s)^m \, \Delta \psi(x) \, dx ds \, ,
	\end{equation*}
	for all $ t>0 $ and all $\psi\in C^\infty_c\!\left(\mathbb{R}^N \right)$. In particular, we obtain:
	\begin{equation}\label{q-k-4}
		\begin{aligned}
			& \, c \left| \int_{\mathbb{R}^N} v_k(x,t) \, \psi(x) \, |x|^{-\gamma}  \, dx - \int_{\mathbb{R}^N} u_0(x) \, \psi(x) \, |x|^{-\gamma} \, dx \right| \\
			\le & \,  t \left\| u_0 \right\|_{L^\infty\left( \mathbb{R}^N \right)}^m \left\| \Delta \psi \right\|_{L^1\left( \mathbb{R}^N \right)} + \left\| u_0 \right\|_{L^\infty\left( \mathbb{R}^N \right)}  \left\| \psi \right\|_{L^\infty\left( \mathbb{R}^N \right)} \left\| \rho_k - c\,|x|^{-\gamma} \right\|_{L^1\left(B_{R_0} \right)} ,
		\end{aligned}
	\end{equation}
	where $  R_0> 0$	is so large that $ \operatorname{supp} \psi \subset B_{R_0} $. Therefore, if $ \{ t_k \} \subset (0,+\infty) $ is an arbitrary sequence such that $ t_k \to 0 $, thanks to \eqref{weight-cond2} and \eqref{q-k-4} we can infer
	\begin{equation}\label{q-k-5}
		\lim_{k \to \infty} \int_{\mathbb{R}^N} v_k(x,t_k) \, \psi(x) \, |x|^{-\gamma}  \, dx	= \int_{\mathbb{R}^N} u_0(x) \, \psi(x) \, |x|^{-\gamma} \, dx \qquad \forall \psi\in C^\infty_c\!\left(\mathbb{R}^N \right) .
	\end{equation}
	On the other hand, from \eqref{weight-cond-scaled} and \eqref{decrease-norms} it is plain that
	\begin{equation}\label{q-k-6}
		\sup_{k \in \mathbb{N}} \left\| v_k(t_k) \right\|_{L^p\left( \mathbb{R}^N , |x|^{-\gamma} \right)} <+\infty \, .
	\end{equation}
	Hence, in view of \eqref{q-k-5}--\eqref{q-k-6}, it follows that
	\begin{equation}\label{weak-1}	
		v_k(t_k)	\underset{k \to \infty}{\longrightarrow} u_0 \qquad \text{weakly in } L^p\!\left( \mathbb{R}^N , |x|^{-\gamma} \right) \! , \ \text{for all } p \in (1,\infty) \, .
	\end{equation}
	In particular, weak convergence also holds locally in $L^1\!\left( \mathbb{R}^N , |x|^{-\gamma} \right)$. Similarly to \eqref{mass-compact}, we can now show that for every $ \varepsilon > 0 $ there exist $ R_{\varepsilon} , k_\varepsilon >0  $ such that
	\begin{equation}\label{mass-compact-final-1}
		\left\| v_k(t_k) \right\|_{L^1\left( B_{R_\varepsilon}^c , |x|^{-\gamma} \right)} \le \varepsilon \qquad \forall k>k_\varepsilon \, .
	\end{equation}	
	Indeed, if \eqref{mass-compact-final-1} failed, there would exist $ \varepsilon_0>0 $, a sequence $ R_j \to +\infty $ and a sequence $ k_j \to \infty $ such that
	\begin{equation*}\label{mass-compact-final-2}
		\left\| v_{k_j}\!\left(t_{k_j}\right) \right\|_{L^1\left( B_{R_j}^c , |x|^{-\gamma} \right)} > \varepsilon_0 \qquad \forall j \in \mathbb{N} \, .
	\end{equation*}
	Therefore, for every fixed $R>1$, we would infer
	\begin{equation*}\label{mass-compact-final-3}
		c \left\| u_0 \right\|_{L^1\left( B_{R} , |x|^{-\gamma} \right)} \le \liminf_{j\to \infty} \left\| v_{k_j}\!\left(t_{k_j}\right) \right\|_{L^1\left( B_{R} , \rho_{k_j} \right)}  \le M -\frac{\varepsilon_0}{2^\gamma \underline{C}}  \, ,
	\end{equation*}		
	where in the first passage we used the weak lower semi-continuity of the local $ L^1\!\left( \mathbb{R}^N , |x|^{-\gamma} \right) $ norm along with \eqref{weight-cond2}. However, upon letting $ R \to +\infty $, we reach a contradiction to the very definition of $M$. Still in view of \eqref{weight-cond2}, it is apparent that for every $ \varepsilon>0 $ it holds
	\begin{equation*}\label{mass-compact-final-4}
		\lim_{k \to \infty} \left|	\left\| v_{k}\!\left(t_{k}\right) \right\|_{L^p\left( B_{R_\varepsilon} , \rho_{k} \right)} - c \left\| v_{k}\!\left(t_{k}\right) \right\|_{L^p\left( B_{R_\varepsilon} ,|x|^{-\gamma} \right)}  \right| =  0 \qquad \forall p \in [1,\infty) \, ,
	\end{equation*}
	whence, upon again recalling \eqref{decrease-norms} and the weak lower semi-continuity of $ L^p\!\left( \mathbb{R}^N , |x|^{-\gamma} \right) $ norms,
	\begin{equation*}\label{mass-compact-final-5}
		\begin{aligned}
			c \left\| u_0 \right\|_{L^p\left( \mathbb{R}^N ,|x|^{-\gamma} \right)} \le & \, c \, \liminf_{k \to \infty} \left\| v_{k}\!\left(t_{k}\right) \right\|_{L^p\left( \mathbb{R}^N ,|x|^{-\gamma} \right)}  \\
			\le & \, c \, \limsup_{k \to \infty} \left\| v_{k}\!\left(t_{k}\right) \right\|_{L^p\left( \mathbb{R}^N ,|x|^{-\gamma} \right)} \\
			\le & \, c \, \limsup_{k \to \infty} \left( \left\| v_{k}\!\left(t_{k}\right) \right\|_{L^p\left( B_{R_\varepsilon} ,|x|^{-\gamma} \right)} + \left\| v_{k}\!\left(t_{k}\right) \right\|_{L^p\left( B_{R_\varepsilon}^c ,|x|^{-\gamma} \right)}  \right) \\
			\le & \, \limsup_{k \to \infty} \left\| v_{k}\!\left(t_{k}\right) \right\|_{L^p\left( B_{R_\varepsilon} , \rho_k  \right)} + c \, \limsup_{k \to \infty}  \left\| v_{k}\!\left(t_{k}\right) \right\|_{L^p\left( B_{R_\varepsilon}^c ,|x|^{-\gamma} \right)}  \\
			\le & \, c \left\| u_0 \right\|_{L^p\left( \mathbb{R}^N ,|x|^{-\gamma} \right)} + c \left\| u_0 \right\|_{L^\infty\left( \mathbb{R}^N \right) }^{\frac{p-1}{p}} \varepsilon^{\frac 1 p} .
		\end{aligned}
	\end{equation*}
	Since $\varepsilon>0 $ is arbitrary, it follows that
	$$
	\lim_{k \to \infty} \left\| v_{k}\!\left(t_{k}\right) \right\|_{L^p\left( \mathbb{R}^N ,|x|^{-\gamma} \right)} = \left\| u_0 \right\|_{L^p\left( \mathbb{R}^N ,|x|^{-\gamma} \right)} ,
	$$	
	which, together with \eqref{weak-1}, ensures that
	\begin{equation}\label{weak-2}	
		v_k(t_k)	\underset{k \to \infty}{\longrightarrow} u_0 \qquad \text{strongly in } L^p\!\left( \mathbb{R}^N , |x|^{-\gamma} \right)\! , \ \text{for all } p \in [ 1,\infty) \, .
	\end{equation}
	Note that, rigorously, this holds for $ p \in (1,\infty) $	 only, but the extension to the case $ p=1 $ is again a straightforward consequence of \eqref{mass-compact-final-1}. Because we have established \eqref{weak-2} for any vanishing sequence $ \{ t_k \} $, this means that for every $ \varepsilon >0 $ there exist $ \delta , k_\varepsilon>0 $ such that
	\begin{equation}\label{weak-3}	
		\left\| v_k(t) - u_0 \right\|	_{L^p\left( \mathbb{R}^N , |x|^{-\gamma} \right)} < \varepsilon \qquad \forall t \in [0,\delta] \, , \ \forall k>k_\varepsilon \, .
	\end{equation}	
	The claimed convergence \eqref{L1-conv-cont} is thus a consequence of \eqref{C3} and \eqref{weak-3}.
\end{proof}

\begin{proof}[Proof of Lemma \ref{lowbar-lemma-lemma}]
For simplicity, we will prove the case $ \ell<\mathcal{C} $ only. The thesis in the case $ \ell \ge \mathcal{C} $ can be reached with standard modifications that will be briefly described at the end of the proof.

The monotonicity property $ v_t \ge 0 $ is a consequence of the fact that, formally, the function $ V = v_t $ satisfies in turn a parabolic problem of the type
	\begin{equation}\label{lowbar-vt}
		\begin{cases}
			\overline{C}\left|x\right|^{-\gamma} V_t= \Delta\!\left( m \, |v|^{m-1} \, V \right) & \text{in } B_1 \times(0,+\infty) \, , \\
			V \ge 0 & \text{on } \partial B_1 \times(0,+\infty) \, ,\\
			V = 0 & \text{on } B_1 \times \{ 0 \} \, , \\
		\end{cases}
	\end{equation}
	for which a maximum principle holds. Note that this can be justified rigorously by approximating the weight, the boundary data and the nonlinearity $ v \mapsto v^m $ with regular and nondegenerate objects, via a standard procedure that we omit (see e.g.~\cite[Chapter 5]{Vaz}).
	
	In order to prove the uniform convergence \eqref{conv-unif-v}, it is convenient to rewrite \eqref{lowbar-lemma} as a \emph{homogeneous} Dirichlet problem upon setting $ w:= \mathcal{C}-v $ and observing that the latter satisfies
	\begin{equation}\label{lowbar-hom}
		\begin{cases}
			\overline{C}\left|x\right|^{-\gamma} w_t = \operatorname{div}\!\left( m \left| \mathcal{C}-w \right|^{m-1} \nabla w \right) & \text{in } B_1 \times(0,+\infty) \, , \\
			w = 0 & \text{on } \partial B_1 \times(0,+\infty) \, ,\\
			w = \mathcal{C}-\ell  & \text{on } B_1 \times \{ 0 \} \, . \\
		\end{cases}
	\end{equation}
	Hence \eqref{conv-unif-v} amounts to proving that
	\begin{equation}\label{conv-unif-w}
		\lim_{t \to +\infty}\left\| w(t)  \right\|_{L^\infty \left( B_1 \right)} = 0 \, ;
	\end{equation}
	to this aim, one can set up a Moser-type iteration, of which we will only highlight the key points as it is by now a rather standard tool. First of all, by comparison we have that $ 0 \le w \le \mathcal{C}-\ell  $. In particular, thanks to \cite[Lemma 5.8]{GM13}, we can assert that there exist constants $ \kappa , q >0 $, depending only on $ m $ and $ \mathcal{C} $, such that
	\begin{equation}\label{nonlin-below}
		\Psi_p(w) := \int_0^w \left| \mathcal{C}-y \right|^{\frac{m-1}{2}}  y^{\frac{p-2}{2}} \, dy  \ge \frac{\kappa}{p^{q}} \, w^{\frac{p}{2}}  \qquad \forall p \ge 2 \, .
	\end{equation}
	Therefore, upon (formally) multiplying both sides of the differential equation in \eqref{lowbar-hom} by $ p \, w^{p-1} $, and integrating by parts in $ B_1 \times (t_0,t_1) $ (let $ t_1>t_0 \ge 0 $), we obtain:
	\begin{equation}\label{first-step-smooth}
		\left\| w(t_1) \right\|_{L^p\left( B_1 , |x|^{-\gamma} \right)}^p - \left\| w(t_0) \right\|_{L^p\left( B_1 , |x|^{-\gamma} \right)}^p = - \frac{m \, p \,  (p-1)}{\overline{C}} \int_{t_0}^{t_1} \int_{B_1} \left| \nabla \Psi_p(w) \right|^2 dx \, dt \, ,
	\end{equation}
	whence, using \eqref{nonlin-below} and the weighted Sobolev inequality associated with $ |x|^{-\gamma} dx $  (see e.g.~\cite[formula (3.13)]{MP}), we end up with the estimate
	\begin{equation}\label{second-step-smooth}
		\begin{aligned}
			\frac{m \, p \,  (p-1) \, \kappa^2}{\overline{C} \, C_S \, p^{2q}} \int_{t_0}^{t_1} \left\| w(t) \right\|_{L^{\frac{2^\ast}{2} p} \left( B_1 , \, |x|^{-\gamma}  \right) }^p  \, dt \le & \,  \frac{m \, p \,  (p-1)}{\overline{C} \, C_S} \int_{t_0}^{t_1} \left\| \Psi_p(w(t)) \right\|_{L^{2^\ast} \! \left( B_1 , \, |x|^{-\gamma}  \right) }^2  \, dt  \\
			\le & \left\| w(t_0) \right\|_{L^p\left( B_1 , \, |x|^{-\gamma} \right)}^p ,
		\end{aligned}
	\end{equation}
	where $ 2^\ast := 2(N-\gamma)/(N-2) $ and $C_S>0$ is the Sobolev embedding constant. By exploiting the fact that $ L^p $ norms decrease in time (trivial consequence of \eqref{first-step-smooth}), and raising to the $ 1 / p  $ power, from~\eqref{second-step-smooth} we end up with
	\begin{equation}\label{third-step-smooth}
		\left\| w(t_1) \right\|_{L^{\frac{2^\ast}{2} p} \left( B_1 \, , |x|^{-\gamma}  \right) }  \le \left[ \frac{\overline{C} \, C_S \, p^{2q}}{m \, p \,  (p-1) \, \kappa^2  \left( t_1-t_0 \right)} \right]^{\frac 1 p} \left\| w(t_0) \right\|_{L^p\left( B_1 , |x|^{-\gamma} \right)} .
	\end{equation}
	Now, we set $ t_0 \equiv t_n := (1-2^{-n}) \, t  $ and $ t_1 \equiv t_{n+1} := \left(1-2^{-n-1}\right) t  $, for a fixed but arbitrary $t>0$, and $ p \equiv p_n := \left( {2^\ast}/{2} \right)^n 2$ (let $n \in \mathbb{N}$), so that \eqref{third-step-smooth} can be written recursively as
	\begin{equation}\label{fourth-step-smooth}
		\left\| w(t_{n+1}) \right\|_{L^{p_{n+1}} \left( B_1 \, , |x|^{-\gamma}  \right) }  \le D^{\frac {n+1} {p_n}} t^{- \frac{1}{p_n} } \left\| w(t_n) \right\|_{L^{p_n} \left( B_1 , |x|^{-\gamma} \right)} ,
	\end{equation}
	for a suitable constant $ D>0 $ depending only on $ N,m,\gamma,\underline{C},\mathcal{C} $, that may change from line to line. At this stage, we can iterate \eqref{fourth-step-smooth} in a classical way (see e.g.~the proof of \cite[Theorem 5.10]{GM13}), to finally obtain
	$$
	\left\| w(t) \right\|_{L^{\infty} \left( B_1  \right) }  \le D \, t^{- \frac{N-\gamma}{2(2-\gamma)} } \left\| w(0) \right\|_{L^2 \left( B_1 , |x|^{-\gamma} \right)} ,
	$$
	whence \eqref{conv-unif-w} follows. Note that the rigorous justification of the above computations can be shown via usual approximations (we refer again to \cite[Chapter 5]{Vaz} or \cite[Section 3]{GMPo}).
	
	In order to handle the case $ \ell \ge \mathcal{C} $, first of all we notice that the boundary condition in \eqref{lowbar-vt} is negative, therefore $ V \le 0 $ and so $ v_t \le 0 $ upon approximation. Then, instead of the bound $ 0 \le w \le \mathcal{C}-\ell $ we have that $ \mathcal{C}-\ell \le w \le 0  $, so \eqref{nonlin-below} is \emph{a fortiori} satisfied up to taking moduli. Finally, to obtain~\eqref{first-step-smooth}, we multiply the differential equation by $-p|w|^{p-1} $ in the place of $p w^{p-1} $, and from~\eqref{first-step-smooth} on, the rest of the proof is exactly the same.
\end{proof}

\begin{proof}[Proof of Proposition \ref{datum}]
Let us start from \eqref{conv-resc-datum}; for convenience we will prove its equivalent version \eqref{conv-resc-datum-bis}. Thanks to \eqref{EE1}, for every fixed $ \varepsilon>0 $ there exists $ R_\varepsilon>0 $ such that
	\begin{equation}\label{eq101}
		\left| u_0(x) - b \left| x \right|^{-\alpha} \right| \le \varepsilon \left| x \right|^{-\alpha} \qquad \text{for a.e. } x \in B_{R_\varepsilon}^c \, .
	\end{equation}
	Hence, we can bound the integral in \eqref{conv-resc-datum-bis} as follows:
	\begin{equation}\label{eq102}
		\begin{aligned}
			& \, \int_{\mathbb{R}^N} \left| u_0\!\left( y \right) - b \left| y \right|^{-\alpha} \right| \Phi\!\left( \tfrac{y}{\xi} \right) \rho(y) \, dy \\
			= & \, \int_{B_{R_\varepsilon}} \left| u_0\!\left( y \right) - b \left| y \right|^{-\alpha} \right| \Phi\!\left( \tfrac{y}{\xi} \right) \rho(y) \, dy + \int_{B_{R_\varepsilon}^c} \left| u_0\!\left( y \right) - b \left| y \right|^{-\alpha} \right| \Phi\!\left( \tfrac{y}{\xi} \right) \rho(y) \, dy \\
			\le & \left\| \Phi \right\|_{L^\infty\left( \mathbb{R}^N \right)} \int_{B_{R_\varepsilon}} \left| u_0\!\left( y \right) - b \left| y \right|^{-\alpha} \right|  \rho(y) \, dy + \varepsilon \, \overline{C} \, \int_{B_{R_\varepsilon}^c}  \left| y \right|^{-\alpha}  \Phi\!\left( \tfrac{y}{\xi} \right) \left| y \right|^{-\gamma} dy \, ,
		\end{aligned}
	\end{equation}	
	where in the last passage we also took advantage of the boundedness of $ \Phi $ and \eqref{weight-cond}. Scaling back to the $x$ variable, we obtain
	\begin{equation}\label{eq103}
		\int_{B_{R_\varepsilon}^c}  \left| y \right|^{-\alpha}  \Phi\!\left( \tfrac{y}{\xi} \right) \left| y \right|^{-\gamma} dy \le \int_{\mathbb{R}^N}  \left| y \right|^{-\alpha}  \Phi\!\left( \tfrac{y}{\xi} \right) \left| y \right|^{-\gamma} dy = \xi^{N-\alpha-\gamma} \int_{\mathbb{R}^N} \frac{\Phi(x)}{|x|^{\alpha + \gamma}} \, dx \, .
	\end{equation}
	Therefore, by combining \eqref{eq102} and \eqref{eq103}, we end up with
	$$
	\begin{aligned}
		& \, \xi^{\alpha+\gamma-N} \int_{\mathbb{R}^N} \left| u_0\!\left( y \right) - b \left| y \right|^{-\alpha} \right| \Phi\!\left( \tfrac{y}{\xi} \right) \rho(y) \, dy \\
		\le & \, \xi^{\alpha+\gamma-N} \left\| \Phi \right\|_{L^\infty\left( \mathbb{R}^N \right)} \int_{B_{R_\varepsilon}} \left| u_0\!\left( y \right) - b \left| y \right|^{-\alpha} \right|  \rho(y) \, dy + \varepsilon \, \overline{C} \int_{\mathbb{R}^N} \frac{\Phi(x)}{|x|^{\alpha + \gamma}} \, dx \, ,
	\end{aligned}
	$$
	which, due to $ \alpha < N-\gamma $, entails
	\begin{equation*}\label{eq104}
		\limsup_{\xi \to +\infty} \, \xi^{\alpha+\gamma-N} \int_{\mathbb{R}^N} \left| u_0\!\left( y \right) - b \left| y \right|^{-\alpha} \right| \Phi\!\left( \tfrac{y}{\xi} \right) \rho(y) \, dy \le \varepsilon \, \overline{C} \int_{\mathbb{R}^N} \frac{\Phi(x)}{|x|^{\alpha + \gamma}} \, dx \, .
	\end{equation*}
	Given the arbitrariness of $ \varepsilon $ and \eqref{A3}, it follows that \eqref{conv-resc-datum-bis} holds.
	
To continue the proof let us now focus on \eqref{second-req-u0-re}, which clearly implies \eqref{second-req-u0}. As in the proof of Proposition \ref{p1}, we let $ R_\gamma > 1 $ be as in \eqref{est-R-gamma}. It can be easily checked that the quantity
	\begin{equation}\label{varnorm}
		\int_{B_{R_\gamma}} \left| u_{0k} - b|x|^{-\alpha} \right| \,  \rho_k \, dx \, + \sup_{\substack{ R \ge R_\gamma \\[0.4mm] z_R \in \partial B_{R} }} \frac{ \int_{ B_{R^{\gamma / 2}} (z_R) } \left| u_{0k} - b|x|^{-\alpha} \right| \, \rho_k \, dx}{R^{\frac{\gamma(N-2)}{2} } }
	\end{equation}
is equivalent to the norm $\norm{u_{0k}-b|x|^{-\alpha}}_{0,\rho_k}$ (with constants independent of $k$), so it is enough to prove that both terms in \eqref{varnorm} converge to $0$ as $k\to\infty$. The starting point is the rescaled version of \eqref{eq101}
	\begin{equation}\label{eq101-resc}
		\left| u_{0k}(x) - b \left| x \right|^{-\alpha} \right| \le \varepsilon \left| x \right|^{-\alpha} \qquad \text{for a.e. } x \in B_{R_\varepsilon/\xi_k}^c \, .
	\end{equation}
Let us  take $k$ large enough so that $R_\varepsilon/\xi_k< R_\gamma/2$.
	Therefore, applying \eqref{eq101-resc}, \eqref{weight-cond-scaled} and  undoing the scaling, we obtain
	\begin{align*}
		\int_{B_{R_\gamma}} \left| u_{0k} - b|x|^{-\alpha} \right| \,  \rho_k \, dx & \leq \varepsilon\,\overline{C}\int_{B_{R_\gamma}\setminus B_{R_\varepsilon/\xi_k}} |x|^{-\alpha-\gamma}\, dx + \int_{B_{R_\varepsilon/\xi_k}} \left| u_{0k} - b|x|^{-\alpha} \right| \,  \rho_k \, dx \\
		& \le \varepsilon\, C \,  R_\gamma^{N-\alpha-\gamma} +  \xi_k^{\alpha+\gamma-N}\int_{B_{R_\varepsilon}} \left| u_{0} - b|x|^{-\alpha} \right| \,  \rho \, dx \, ,
	\end{align*}
	where $C$ is a positive constant independent of $k$ that may vary from line to line. Taking first $k\to\infty$ and then $\varepsilon\to0$, recalling that $ \alpha < N-\gamma $, proves that the first term of \eqref{varnorm} converges to $0$ as $k\to\infty$. For the second term of \eqref{varnorm}, still under $R_\varepsilon/\xi_k< R_{\gamma}/2$, we have
	\[
	B_{R^{\gamma / 2}} (z_R) \cap B_{R_\varepsilon/\xi_k} = \emptyset \qquad \forall R\geq R_\gamma \, ,
	\]
	so we always work in the set where \eqref{eq101-resc} holds. Therefore, once again applying \eqref{eq101-resc}, we conclude
	\begin{align*}
		\int_{ B_{R^{\gamma / 2}} (z_R) } \left| u_{0k} - b|x|^{-\alpha} \right| \, \rho_k \, dx & \leq \varepsilon\,\overline{C} \int_{ B_{R^{\gamma / 2}} (z_R) } |x|^{-\alpha-\gamma}\,dx \leq \varepsilon\,C\, R^{-\alpha+{\frac{\gamma(N-2)}{2} }}
	\end{align*}
for $k$ large enough (depending on $\varepsilon$). The fact that the second term of \eqref{varnorm} vanishes now follows simply by applying this estimate, taking $ k \to \infty $ and then $\varepsilon\to0$.
\end{proof}

\normalcolor

\noindent \textbf{Acknowledgments.} M.M.~and T.P.~were supported by project no.~2022SLTHCE ``Geometric-Analytic Methods for PDEs and Applications (GAMPA)'' -- funded by European Union -- Next Generation EU within the PRIN 2022 program (D.D. 104 - 02/02/2022 Ministero dell’Università e della Ricerca, Italy). They both thank the INdAM-GNAMPA group (Italy). F.Q.~was supported by grants PID2020-116949GB-I00, CEX2019-000904-S,  and RED2022-134784-T, all of them funded by MCIN/AEI/10.13039/501100011033, and by the Madrid Government (Comunidad de Madrid – Spain) under the multiannual Agreement with UAM in the line for the Excellence of the University Research Staff in the context of the V PRICIT (Regional Programme of Research and Technological Innovation).

\end{document}